\newcommand{\remove}[1]{}
\newtheorem{thm}{Theorem}[section]
\newtheorem{claim}[thm]{Claim}
\newtheorem{lem}[thm]{Lemma}
\newtheorem{define}[thm]{Definition}
\newtheorem{cor}[thm]{Corollary}
\newtheorem{comm}[thm]{Comment}
\def\R{{\mathbb{R}}}
\def\C{{\mathbb{C}}}
\def\cK{{\cal K}}
\def\cM{\mathcal{M}}
\def\bp{{\mathbf p}}
\def\bq{{\mathbf q}}
\def\M{{\mathcal{M}}}
\def\_{\,\,\,\,\,}
\def\tr{\textsf{tr}}
\def\rank{\textsf{rank}}
\def\capac{\textsf{cap}}
\def\row{\textsf{Row}}
\def\col{\textsf{Col}}
\def\ds{\textsf{ds}}
\def\spn{\textsf{span}}
\newcommand{\eps}{\epsilon}
\begin{document}

\begin{frontmatter}[classification=text]

\title{Rank bounds for design matrices with block entries and geometric applications} 

\author[dvir]{Zeev Dvir\thanks{Research supported by NSF CAREER award DMS-1451191 and NSF grant CCF-1523816.}}
\author[garg]{Ankit Garg}
\author[oliveira]{Rafael Oliveira}
\author[solymosi]{J\'{o}zsef Solymosi\thanks{Research  partially supported by NSERC,  ERC Advanced Research Grant AdG. 321104 and by Hungarian National Research Grant NK 104183.}}

\begin{abstract}
Design matrices are sparse matrices in which the supports of different columns intersect in a few positions. Such matrices come up naturally when studying problems  involving point sets with many collinear triples. In this work we consider  design matrices with block (or matrix)  entries. Our main result is a lower bound on the rank of such matrices, extending the bounds proven in \cite{BDWY12,DSW12} for the scalar case. As a result we obtain several applications in combinatorial geometry.  The first application involves extending the notion of structural rigidity (or graph rigidity) to the setting where we wish to bound the number of `degrees of freedom' in perturbing a set of points under  collinearity constraints (keeping some family of triples collinear). Other applications are an asymptotically tight  Sylvester-Gallai type result for arrangements of subspaces (improving \cite{DH}) and a new incidence bound for high dimensional line/curve arrangements. 

The main technical tool in the proof of the rank bound is an extension of the technique of matrix scaling to the setting of block matrices. We generalize the definition of doubly stochastic matrices to matrices with block entries and derive sufficient conditions for a doubly stochastic scaling to exist. 
\end{abstract}
\end{frontmatter}

\section{Introduction}
Design matrices, defined in \cite{BDWY12}, are (complex) matrices that satisfy certain conditions on their support (the set of non-zero entries). Roughly speaking, a design matrix has few non-zero entries per row, many non-zero entries per column and, most importantly, the supports of every  two columns intersect in a small number of positions. In \cite{BDWY12,DSW12}, lower bounds on the rank of such matrices were given and applied to upper bound the dimension of point configurations in $\C^d$ containing many collinear triples. In particular, \cite{DSW12} used this method to give a new elementary proof of Kelly's theorem (the complex analog of the Sylvester-Gallai theorem). In this work we generalize the rank bounds from \cite{BDWY12,DSW12} to handle design matrices with matrix entries. We then use these bounds to prove several new results in combinatorial geometry. 

Our geometric applications are of three types. The first deals with bounding the number of `degrees of freedom' when smoothly perturbing a set of points while maintaining a certain family of triples collinear. This is in the same spirit of structural rigidity results \cite{laman1970} in which pairwise distances are maintained along the edges of a graph embedded in the plane. The second application is a generalization of the Sylvester-Gallai theorem for arrangements of subspaces. Such a result was recently proved in \cite{DH} and we are able to give an asymptotically tight improvement to their results. The last application involves arrangements of lines and curves in $\C^d$ that have many pairwise incidences (each line/curve intersects many others). We are able to show upper bounds on the dimension of such configurations as a function of the number of incidences and under the assumption that no low dimensional subspace contains `too many' of the lines/curves.
	
The main tool used  to prove the rank bounds for design matrices in \cite{BDWY12,DSW12} was matrix scaling. Given a complex matrix $A = (A_{ij})$, we try to find coefficients $r_i,c_j$ for each row/column so that the matrix with entries $B_{ij} = r_iA_{ij}c_j$ is doubly stochastic. In this setting, one is actually interested  in the $\ell_2$ norms of all rows/columns being equal (instead of $\ell_1$). The main technical difficulty is in giving sufficient conditions that guarantee the existence of such a scaling. Following the pioneering work of Sinkhorn \cite{Sink}, such conditions are analyzed completely in \cite{RS}. To handle design matrices with block entries we study the problem of matrix scaling for block matrices. Finding sufficient conditions for scaling is intimately related to the well studied problem of operator scaling \cite{gurvits2004,LSW,GGOW15}. We give a (mostly) self-contained and elementary derivation of sufficient conditions for scaling to exist relying only on the work of \cite{BCCT} which gives sufficient conditions for scaling of matrices with one column (see Theorem~\ref{thm-BCCT} below). We note that \cite{BCCT} does not mention matrix scaling explicitly in their work (which studies the Brascamp-Lieb inequalities). The observation that this part of their work can be interpreted through this angle seems to not have been noticed before.

 We describe our results in more detail in the subsections below.  The main technical work involving matrix scaling will be discussed in Section~\ref{sec-scalingcapacity}.

\subsection{Design matrices with block entries}
For the rest of the paper, all matrices are complex unless otherwise noted. By positive definite (semi-definite) matrix we mean Hermitian matrix with positive (non-negative) eigenvalues.

	Let $\M_{m,n}(r,c)$ denote the set of $m \times n$ matrices with entries being $r \times c$ matrices. When referring to rows (and columns) of $A$ we mean the $m$ rows of blocks (and $n$ columns). We sometimes refer to the entries of $A$ as the blocks of $A$. For a matrix $A \in \M_{m,n}(r,c)$ we denote by $\tilde A$ the $\M_{rm,cn}(1,1)$ matrix obtained from $A$ in the natural way (ignoring blocks). We define  $\rank(A)$ to be the rank of $\tilde A$ (as a complex matrix). We will sometimes identify a matrix $A \in \M_{m,n}(r,c)$  with a linear map from $\C^{nc}$ to $\C^{mr}$ given by $\tilde A$.

To define design matrices with block entries we will need the following definition.
\begin{define}[well-spread set]\label{def-wellspread}
Let $S = \{A_1,\ldots,A_s\} \subset \M_{r,c}(1,1)$ be a  set (or multiset) of $s$ complex $r \times c$ matrices. We say that $S$ is {\em well-spread} if, for every subspace $V \subset \C^c$ we have
\begin{equation*}
	\sum_{i \in [s]}\dim(A_i(V)) \geq \frac{rs}{c}\cdot \dim(V). 
\end{equation*}
\end{define}

The following definition extends the definition of design matrices given in \cite{BDWY12}.

\begin{define}[design matrix]
	A matrix $A \in \M_{m,n}(r,c)$ is called a $(q,k,t)$-design matrix if it satisfies the 
	following three conditions
	\begin{enumerate}
		\item Each row of $A$ has at most $q$ non zero blocks.
		\item Each column of $A$ contains $k$ blocks that, together, form a well-spread set.
		\item For any $j \neq j' \in [n]$ there are most $t$ values of $i \in [m]$ so that both 
		$A_{ij}$ and $A_{ij'}$  are non-zero blocks. In other words, the supports of two 
		columns intersect in at most $t$ positions.
	\end{enumerate}
\end{define}

\begin{comm}\label{com-nonsingular}
	Notice that, for the case $r=c=1$, the second item simply requires that each column has at least $k$ non-zero entries. Hence, this definition extends the definitions of design matrices from previous works (\cite{BDWY12}, \cite{DSW12}). More generally, if $r=c$ then the second item is equivalent to asking that each column contains at least $k$ non singular blocks.
\end{comm} 

Our main theorem is the following lower bound on the rank of design matrices. Setting $r=c=1$ we  recover the rank bound from \cite{DSW12}. 

\begin{thm}[rank of design matrices]\label{thm-rank-design}
	Let $A \in \M_{m,n}(r,c)$ be a $(q,k,t)$-design matrix. Then $$ \rank(A) \geq cn - \frac{cn}{1+X},$$ with $X = \frac{kr}{ct(q-1)}.$
\end{thm}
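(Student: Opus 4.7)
The plan is to adapt the scale-then-trace strategy from \cite{BDWY12,DSW12} to the block setting. The two ingredients are (i) a block analogue of matrix scaling (developed in Section~\ref{sec-scalingcapacity}, with the well-spread hypothesis serving as the sufficient condition), and (ii) a second-moment trace bound on $B^*B$ that exploits the support design. Using (i), I would produce invertible $R_1,\dots,R_m \in \C^{r\times r}$ and $C_1,\dots,C_n \in \C^{c\times c}$ such that the rescaled blocks $B_{ij} := R_i A_{ij} C_j$ satisfy $\sum_i B_{ij}^* B_{ij} = I_c$ for each $j$ and $\sum_j B_{ij}B_{ij}^* = \tfrac{cn}{rm}\,I_r$ for each $i$ (the scalar being forced by comparing the two global traces). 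Since $R,C$ are invertible, $\rank(A)=\rank(B)$; exact scaling may only exist as a limit, handled as usual via a small perturbation of $A$ or a continuity argument. I expect this to be the main obstacle, since it is the only step that truly uses the well-spread condition.

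Setting $M := B^*B$, Cauchy--Schwarz on the eigenvalues of this $cn\times cn$ PSD matrix gives $\rank(M) \geq (\tr M)^2/\tr(M^2)$, and the column normalization immediately yields $\tr(M) = nc$. To bound $\tr(M^2) = \sum_{j,j'}\|\sum_i B_{ij}^*B_{ij'}\|_F^2$, the diagonal terms ($j=j'$) contribute exactly $n\|I_c\|_F^2 = nc$. For the off-diagonal terms, write $P_{ij} := B_{ij}B_{ij}^*$, so the $P_{ij}$ are PSD $r\times r$ matrices with $\sum_j P_{ij} = \tfrac{cn}{rm}I_r$ and the identity $\|B_{ij}^*B_{ij'}\|_F^2 = \tr(P_{ij}P_{ij'})$ holds by the cyclic property. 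Expanding $\tr\bigl((\sum_j P_{ij})^2\bigr)$ and peeling off the diagonal gives
\[
\sum_{j\neq j'}\tr(P_{ij}P_{ij'}) \;=\; \frac{c^2 n^2}{r m^2} \;-\; \sum_j \|P_{ij}\|_F^2.
\]
The block inequality $\|P\|_F^2 \geq (\tr P)^2/r$ (valid for PSD $P$) combined with Cauchy--Schwarz on the at-most-$q$ nonzero terms of row $i$ yields $\sum_j \|P_{ij}\|_F^2 \geq (cn/m)^2/(qr)$, so the displayed quantity is at most $c^2 n^2 (q-1)/(rqm^2)$.

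To bring in the intersection condition, note that only $i \in S_{jj'} := \{i : A_{ij}, A_{ij'}\neq 0\}$ (with $|S_{jj'}|\leq t$) contribute to $\sum_i B_{ij}^* B_{ij'}$, so Cauchy--Schwarz gives $\|\sum_i B_{ij}^*B_{ij'}\|_F^2 \leq t\sum_i \|B_{ij}^*B_{ij'}\|_F^2$. Swapping the order of summation and substituting the previous bound yields an off-diagonal contribution at most $tc^2 n^2(q-1)/(rqm)$. A final comparison of row and column support sizes (each column has at least $k$ nonzero blocks, each row at most $q$, so $mq\geq kn$) reduces this to $tc^2 n(q-1)/(kr)$, giving $\tr(M^2) \leq nc(kr + tc(q-1))/(kr)$. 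Therefore
\[
\rank(A) \;\geq\; \frac{(nc)^2}{\tr(M^2)} \;\geq\; \frac{nc\cdot kr}{kr + tc(q-1)} \;=\; cn - \frac{cn}{1+X}
\]
with $X = kr/(ct(q-1))$, as claimed. The extra factor of $r$ in $X$ relative to the scalar bound of \cite{DSW12} enters the argument at exactly one place: the block inequality $\|P_{ij}\|_F^2 \geq (\tr P_{ij})^2/r$.
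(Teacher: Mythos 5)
Your trace computation (the second half of the proposal) is sound and, after adjusting constants, matches the paper's Lemma~\ref{lem-design+scale=rank}: the diagonal of $B^*B$ gives $\tr(M)=nc$, the off-diagonal is controlled by Cauchy--Schwarz over the at most $t$ common rows and at most $q$ nonzero blocks per row, and the block inequality $\|P\|_F^2\ge(\tr P)^2/r$ is exactly where the extra factor $r$ enters (the paper packages this as Claim~\ref{cla-rowest}). The gap is in step (i). You assert that the machinery of Section~\ref{sec-scalingcapacity}, ``with the well-spread hypothesis serving as the sufficient condition,'' yields an (approximate) doubly stochastic scaling of $A$ itself. It does not: Lemma~\ref{lem-capacity-scalable} requires $\capac(A)>0$, and the only route the paper (via Theorem~\ref{thm-BCCT} and Claim~\ref{cla-block-diagonal}) gives to positive capacity is for matrices in \emph{regular form}, where the $k$ well-spread blocks of distinct columns occupy pairwise disjoint rows, so that zeroing the remaining entries leaves a block-diagonal matrix to which the one-column result of \cite{BCCT} applies per column. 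The well-spread condition is purely per-column and is \emph{not} sufficient for scalability of the whole matrix: for instance, append to a legitimate design matrix a batch of extra rows each supported on a single fixed column (or even all-zero rows, which the definition permits); the $(q,k,t)$-design conditions survive, the rank can only go up, but no approximate doubly stochastic scaling exists, since the forced row marginals exceed what the column marginals can absorb (in the scalar case this is exactly a violation of the Rothblum--Schneider support condition). So ``scale $A$ directly'' is not merely a technicality to be fixed by a limit; it can fail outright.

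The paper bridges this with a regularization step (Claim~\ref{cla-regularize}): for each column select the $k$ rows witnessing its well-spread set and stack these $n$ selections into a new matrix $B\in\M_{nk,n}(r,c)$. Since rows of $A$ may be reused up to $q$ times, $B$ is a $(q,k,qt)$-design matrix in regular form with $\rank(B)\le\rank(A)$, and Claim~\ref{cla-regular-scalable} makes it scalable. Running your trace argument on $B$ (with $m=nk$ and $t$ replaced by $qt$) recovers exactly the stated bound --- the loss $t\mapsto qt$ cancels against replacing your inequality $mq\ge kn$ by the equality $m=kn$, giving $X=\frac{kr}{ct(q-1)}$. Two smaller remarks: your fallback of ``a small perturbation of $A$'' is the wrong fix (perturbing $A$ does not lower-bound $\rank(A)$); the correct handling, as in the paper, is to work with an $\eps$-approximately doubly stochastic scaling (rank is unchanged by the invertible scaling coefficients) and let $\eps\to 0$ in the final inequality. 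Also, your use of ``each column has at least $k$ nonzero blocks'' to get $mq\ge kn$ is legitimate, since well-spreadness forces each of the $k$ designated blocks to be surjective, hence nonzero.
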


We now describe the various geometric applications of this theorem.

\subsection{Projective rigidity}

Given a finite set of points $V$ in $\C^2$ containing some collinear triples, we can apply any 
projective transformation on $V$ and keep all collinear triples collinear. This gives $8$ `degrees of freedom'  
for us to `move' $V$ (keeping its collinearity structure). But are there more transformations we can 
perform? To study this question more formally, we begin with the following definition.

\begin{define}[Projective Rigidity]
Let $V = (v_1,\ldots,v_n) \in (\C^d)^n$ be a list of $n$ points in $\C^d$ and let $T \subset {[n] \choose 3}$ 
be a multiset of triples on the set $[n]$ (we allow repetitions of triples for technical reasons). Let 
$\cK_T \subset \C^{nd}$ be the variety of lists of $n$ points in which all triples in $T$ are collinear. 
Let $P_V \in \C^{nd}$ denote the concatenation of coordinate vectors of all points in $V$. We say that 
$(V,T)$ is $r$-rigid if $P_V$ is a non singular  point of $\cK_T$ and the dimension of its irreducible component 
is at most $r$. We denote the set of pairs $(V,T)$ as above (with $P_V \in \cK_T$) by $COL(n,d)$.
\end{define}

Hence, showing that a point set $V \subset \C^2$ with a family of triples $T$ is $8$-rigid means showing that it cannot be changed smoothly in any nontrivial way. Using our rank bound for design block matrices, we are able to prove a general theorem (Theorem~\ref{thm-rigidity-triples}) giving quantitative bounds on the rigidity of pairs $(V,T)$ satisfying certain conditions. For example, if every pair of points in $V$ is in exactly one triple in $T$ and no line contains more than half of the points in $V$ then we can  prove an upper bound of 15 on the rigidity of the pair $(V,T)$.  We refer the reader to Section~\ref{sec-rigidity} for a more complete description of these results.

\paragraph{Other notions of rigidity:} A more well-studied notion of geometric rigidity has to do with fixing the {\em distances} between pairs of points. Let $G = G(V, E)$ be a graph, where $|V| = n, \ |E| = m$. Let $\bp = (p_v)_{v \in V}$ be an 
embedding of $G$ in $\R^d$, where to each vertex $v \in V$ we assign the point 
$p_v \in \R^d$. By fixing the order of the vertices in $V$, we can identify the set of embeddings of $G$
in $\R^d$ with points $\bp \in (\R^d)^n = \R^{dn}$. Given such point-bar framework $(G, \bp)$, one is generally 
interested in the study of all continuous paths in $\R^{dn}$ which preserve 
the distances of all pairs of points in $E$. More succinctly, given the distance function of $G$
$\Delta_G : \R^{dn} \to \R^{m}$ defined by 
$$\Delta_G(x_1, \ldots, x_n) = (\| x_u - x_v \|_2)_{\{u, v\} \in E},$$
we are interested in studying all continuous paths in $\R^{dn}$ starting from $\bp$ 
which leave $\Delta_G$ unchanged.

If, for a given framework $(G, \bp)$, it turns out that every continuous path from $\bp$ which preserves
$\Delta_G$ terminates at a point $\bq \in \R^{dn}$ such that $\bq$ is an isometry of $\bp$, we say that
the framework $(G, \bp)$ is \emph{rigid}. That is, if 
$\Delta_G(\bp) = \Delta_G(\bq)$ implies $\Delta_{K_n}(\bp) = \Delta_{K_n}(\bq)$ for all $\bq \in \R^{dn}$
obtained from $\bp$ in the above manner, we say that the framework $(G, \bp)$ is rigid.
Otherwise, we call the framework $(G, \bp)$ \emph{flexible}. For more concrete motivations to the study
of rigidity, we refer the reader to~\cite{laman1970} and references therein.
%

A different notion of rigidity, which is closer to ours in spirit, is the one given by Raz~\cite{raz2016configurations},
which we now define. Given a (multi)set of lines $L = \{\ell_1, \ldots, \ell_n \}$ in $\C^3$, we define the 
\emph{intersection graph} of $L$ as the graph $G_L = G_L([n], E)$ where $\{i, j\} \in E$ iff $i \neq j$
and the corresponding lines $\ell_i$ and $\ell_j$ intersect. For a graph $G$, we say that $L$ is a realization
of $G$ if $G \subseteq G_L$. With these definitions, we say that a graph $G$ is rigid if for any \emph{generic}
realization $L = \{\ell_1, \ldots, \ell_n \}$ of $G$, we must have $G_L = K_n$. 

\subsection{Sylvester-Gallai for subspaces}

Another application of Theorem~\ref{thm-rank-design} gives a quantitative improvement to the results of \cite{DH} who generalized the Sylvester-Gallai theorem for arrangements of subspaces in $\C^d$.  We show the following:

\begin{thm} \label{thm-tightsg}
Let $V_1,V_2,\ldots,V_n\subset \C^d$ be $\ell$-dimensional subspaces such that $V_{i}\cap V_{i'}=\{\vec{0}\}$ for all $i\neq i'\in[n]$. Suppose that, for every $i_1\in[n]$ there exists at least $\delta (n-1)$ values of $i_2\in[n]\setminus\{i_1\}$ such that $V_{i_1}+V_{i_2}$ contains some $V_{i_3}$ with $i_3 \not\in \{i_1,i_2\}$. Then $$\dim(V_1+V_2+\cdots+V_n)\leq \left\lceil \frac{4\ell}{\delta} \right\rceil - 1.$$ 
\end{thm}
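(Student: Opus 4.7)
The plan is to apply Theorem~\ref{thm-rank-design} to a block design matrix whose rows encode the subspace collinearities $V_{i_3}\subset V_{i_1}+V_{i_2}$. Fix a basis of each $V_i$ and let $B_i$ be the $d\times\ell$ matrix whose columns are that basis. The horizontal concatenation $\tilde B=[B_1\mid\cdots\mid B_n]$ has column space $V_1+\cdots+V_n$, so bounding $\dim(V_1+\cdots+V_n)=\rank(\tilde B)$ from above is equivalent to producing many independent vectors in $\ker(\tilde B)\subset\C^{n\ell}$. Whenever $V_{i_3}\subset V_{i_1}+V_{i_2}$ there exist unique $\ell\times\ell$ matrices $C_{i_1},C_{i_2}$ with $B_{i_1}C_{i_1}+B_{i_2}C_{i_2}-B_{i_3}=0$; both are invertible because a nonzero $x\in\ker(C_{i_1})$ would force $B_{i_3}x$ into the forbidden intersection $V_{i_2}\cap V_{i_3}=\{0\}$ while $B_{i_3}$ has full column rank.

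For every ordered pair $(i_1,i_2)$ promised by the hypothesis, choose a witness $i_3=k(i_1,i_2)$ and assemble the identity above into a block row with invertible $\ell\times\ell$ entries $C_{i_1}$, $C_{i_2}$, and $-I$ in columns $i_1,i_2,i_3$ (zeros elsewhere). Stacking yields $A\in\M_{m,n}(\ell,\ell)$ whose rows sit in $\ker(\tilde B)$, so $\dim(V_1+\cdots+V_n)\le n\ell-\rank(A)$. By construction $A$ has $q=3$ nonzero blocks per row; and since every nonzero block is invertible, any collection of such blocks in a single column trivially saturates Definition~\ref{def-wellspread} and is therefore well-spread. The hypothesis guarantees that each column index $c$ appears in at least $\delta(n-1)$ rows as a first coordinate, so we may take $k=\delta(n-1)$. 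Theorem~\ref{thm-rank-design} with $r=c=\ell$ now yields
\[
 \dim(V_1+\cdots+V_n) \;\le\; \frac{n\ell}{1+X},\qquad X=\frac{k}{2t},
\]
where $t$ is the maximum number of rows whose support contains a given pair of columns; the target bound $4\ell/\delta$ corresponds to achieving $t\le 2$.

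The technical heart of the argument is controlling $t$. Naively, placing a fixed pair $\{c,c'\}$ among the three coordinates in all six orderings gives only $t\le 6$, which would yield a weaker bound of order $12\ell/\delta$. To sharpen this, the witness function $k(\cdot,\cdot)$ should be chosen injective in each of its two arguments and the ordered pairs restricted to a single orientation; this collapses each of the remaining contributions to at most one. The only surviving source of large $t$ is the degenerate regime where $V_c+V_{c'}$ already contains many of the $V_i$'s; in that case those subspaces all live inside a $2\ell$-dimensional flat and are handled as a sub-configuration by a separate case-split, leaving a generic configuration that satisfies $t\le 2$. Putting the two regimes together gives $X\ge\delta(n-1)/4$ and
\[
 \dim(V_1+\cdots+V_n) \;\le\; \frac{4n\ell}{4+\delta(n-1)} \;<\; \frac{4\ell}{\delta},
\]
and since the left-hand side is an integer the stated bound $\lceil 4\ell/\delta\rceil-1$ follows. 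The main obstacle, as indicated, is the careful selection of the witness function $k(\cdot,\cdot)$ and the isolation of the degenerate $2\ell$-flat case; everything else is a direct application of the machinery of Theorem~\ref{thm-rank-design}.
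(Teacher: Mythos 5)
Your setup is sound and matches the paper's: the kernel identity $B_{i_1}C_{i_1}+B_{i_2}C_{i_2}-B_{i_3}=0$ with invertible $C$'s is exactly Claim~\ref{cla-blockrow}, the observation that invertible square blocks are automatically well-spread is Comment~\ref{com-nonsingular}, and the final arithmetic $\frac{4n\ell}{4+\delta(n-1)}<\frac{4\ell}{\delta}$ plus integrality is the paper's closing step. The gap is in the step you yourself flag as ``the technical heart'': the claim that one can achieve $t\le 2$ by choosing the witness function injectively and restricting to one orientation, with the degenerate $2\ell$-flats ``handled as a sub-configuration by a separate case-split.'' This is not an argument, and as sketched it does not give $t\le 2$. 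With one row per unordered pair $\{i_1,i_2\}$ and a witness $i_3=k(i_1,i_2)$, a fixed pair of columns $\{c,c'\}$ can co-occur in the row for the pair $\{c,c'\}$ itself, and additionally in rows where $c'$ is the witness of a pair containing $c$ (in either of the two slots) and symmetrically with the roles of $c,c'$ swapped; even assuming injectivity of the witness function in each argument this only yields $t\le 5$, i.e.\ a bound of order $10\ell/\delta$ rather than $4\ell/\delta$. Worse, inside a special $2\ell$-flat containing $r$ of the subspaces, all witnesses for pairs anchored at a fixed $i_1$ must come from the $r-2$ remaining indices in that flat, so the combinatorics of the ``degenerate regime'' cannot simply be quarantined: those rows are needed to certify the $\delta(n-1)$ incidences of the points inside the flat, and no mechanism is given for recombining a separately-treated sub-configuration with the rest.

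The paper resolves exactly this difficulty by replacing ``one row per pair'' with a balanced triple system: within each special $2\ell$-dimensional flat containing $r\ge 3$ of the $V_i$'s, Lemma~\ref{lem-steiner} produces a multiset of $r^2-r$ triples in which every index lies in exactly $3(r-1)$ triples while every pair lies in at most $6$; note this genuinely requires multisets (for $r=3$ the unique triple is taken with multiplicity $6$), which a witness-per-pair scheme cannot emulate. Taking the union over all special flats gives a triple family in which every column appears at least $3\delta(n-1)$ times and every pair at most $6$ times, so $A_C$ is a $(3,3\delta(n-1),6)$-design matrix and Theorem~\ref{thm-rank-design} yields $X=\frac{3\delta(n-1)}{6\cdot 2}=\frac{\delta(n-1)}{4}$ --- the same $X$ you were aiming for, but obtained by improving the ratio $k/t$ through regularization rather than by forcing $t\le 2$. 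Without this (or an equivalent balancing device), your proof does not reach the stated $\left\lceil 4\ell/\delta\right\rceil-1$ bound.
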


The original bound proven in \cite{DH} was a slightly worse $O(\ell^4/\delta^2)$.  For $\delta=1$ and $\ell=1$ the bound of $3$ we get is completely tight as there are three dimensional configurations of one dimensional subspaces  over $\C$ with every pair spanning some third subspace (this can be obtained by taking the Hesse configuration and moving to projective space \cite{HessePencil}). When $\delta=1$ and $\ell >1$ it remains open whether or not the bound $4\ell-1$ bound is tight or not  (one can get a lower bound of $3\ell$ by taking the product of the one dimensional example). 

The condition $V_i \cap V_{i'} = \{\vec{0}\}$ is needed due to the following example given in \cite{DH}: Set $\ell=2$ and $n=d(d-1)/2$ and let $\{\vec{e}_1,\vec{e}_2,\ldots,\vec{e}_d\}$ be the standard basis of $\C^d$. Define the $n$ spaces to be  $V_{ij} = \spn\{\vec{e}_i,\vec{e}_j\}$  with $1\leq i<j\leq d$. Now, for each $(i,j) \neq (i',j')$ the sum $V_{ij} + V_{i'j'}$ will contain a third space (since the size of $\{i,j,i',j'\}$ is at least three). However, this arrangement has dimension $d > \sqrt{n}$.

The one dimensional ($\ell=1$) version of Theorem~\ref{thm-tightsg} was originally proven in \cite{BDWY12,DSW12} as an application of the rank bound for (scalar) design matrices. In \cite{DH}, a different, more lossy, proof technique was developed to handle the  higher dimensional case (also relying on methods similar to \cite{BCCT}). Our proof goes back to the original proof strategy using rank of design matrices, now with block entries, and applying Theorem~\ref{thm-rank-design}.

\subsection{Pairwise incidences of lines and curves}

Our final application  of Theorem~\ref{thm-rank-design} is the following result about pairwise incidences in a given set of lines.

\begin{thm}\label{thm-lineincidence-intro}
	Let $L_1,\ldots,L_n \subset \C^d$ be lines such that each $L_i$ intersects at least $k$ other lines and, among those $k$ lines, at most $k/2$ have the same intersection point on $L_i$. Then, the $n$ lines are contained in an affine subspace of dimension at most $\left\lfloor \frac{4n}{k+2}\right\rfloor -1$.
\end{thm}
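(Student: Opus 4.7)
The plan is to reduce the statement to a rank lower bound for a block design matrix and then invoke Theorem~\ref{thm-rank-design}. Embed $\C^d$ into $\P^d$ and let $V_i \subset \C^{d+1}$ be the $2$-dimensional linear subspace whose projectivization is the closure of $L_i$. Fix a $2 \times (d+1)$ matrix $B_i$ whose rows are a basis of $V_i$ and stack these into $B \in \C^{2n \times (d+1)}$. Since every $L_i$ meets the affine chart, the affine span of $\bigcup_i L_i$ has dimension exactly $\rank(B) - 1 = \dim(V_1 + \cdots + V_n) - 1$, so it suffices to show $\rank(B) \leq \lfloor 4n/(k+2) \rfloor$.

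For every intersecting pair $\{i,j\}$ the subspace $V_i \cap V_j$ is $1$-dimensional, so we may choose row vectors $\xi_{ij}, \eta_{ij} \in \C^{1 \times 2}$ with $\xi_{ij}^T B_i = \eta_{ij}^T B_j$ spanning $V_i \cap V_j$. Build $A \in \M_{m,n}(1,2)$ whose columns are indexed by the $n$ lines and whose rows are indexed by the intersecting pairs: in the row for $\{i,j\}$ place the block $\xi_{ij}^T$ in column $i$, the block $-\eta_{ij}^T$ in column $j$, and zeros elsewhere. Then $AB = 0$, so $\rank(B) \leq 2n - \rank(A)$, and a lower bound on $\rank(A)$ via Theorem~\ref{thm-rank-design} will close the proof.

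Reading off the parameters, each row of $A$ has $q = 2$ non-zero blocks, any two distinct columns share at most one non-zero row (so $t = 1$), and each column $i$ contains at least $k$ non-zero blocks. The heart of the argument is verifying the well-spread condition of Definition~\ref{def-wellspread} on some $k$ blocks of each column. Fix $i$ and restrict to the $k$ intersecting neighbors supplied by the hypothesis. The map $\xi \mapsto \xi^T B_i$ is a linear isomorphism $\C^2 \to V_i$, so two vectors $\xi_{ij}$ and $\xi_{ij'}$ are proportional iff $L_j$ and $L_{j'}$ meet $L_i$ at the same point; the hypothesis therefore forces at most $k/2$ of the chosen $\xi_{ij}$'s into any fixed $1$-dimensional subspace of $\C^2$. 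For a $1$-dimensional $V = \text{span}(v) \subset \C^2$ we have $\dim(\xi_{ij}^T V) = 0$ iff $\xi_{ij}^T v = 0$, i.e.\ iff $\xi_{ij}$ lies in one specific $1$-dimensional subspace of $\C^2$; hence at most $k/2$ of the terms vanish and $\sum_j \dim(\xi_{ij}^T V) \geq k/2 = (rk/c)\dim(V)$. The case $V = \C^2$ is immediate, so well-spreadness holds.

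Applying Theorem~\ref{thm-rank-design} with $r = 1$, $c = 2$, $q = 2$, $t = 1$ gives $X = k/2$ and $\rank(A) \geq 2n - 2n/(1+k/2) = 2n - 4n/(k+2)$, whence $\rank(B) \leq 4n/(k+2)$ and, by integrality, $\rank(B) \leq \lfloor 4n/(k+2) \rfloor$, as required. The main obstacle is identifying the correct block structure (blocks of size $1 \times 2$, one row per intersecting pair) so that the hypothesis on coincident intersection points on $L_i$ matches exactly the combinatorial bound demanded by Definition~\ref{def-wellspread}; once the parametrization $\xi \mapsto \xi^T B_i$ of $V_i$ is in place, the equivalence between proportional $\xi_{ij}$'s and shared intersection points on $L_i$ makes both the linear-dependence setup and the well-spread verification routine.
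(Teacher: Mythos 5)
Your proposal is correct and follows essentially the same route as the paper: lift each line to a $2$-dimensional subspace of $\C^{d+1}$, encode each pairwise intersection as a row of a block matrix in $\M_{m,n}(1,2)$ annihilating the stacked basis matrix, verify it is a $(2,k,1)$-design using the at-most-$k/2$-coincident-intersections hypothesis for well-spreadness, and apply Theorem~\ref{thm-rank-design} plus integrality. The only cosmetic difference is that the paper routes through a separately stated homogeneous version (Theorem~\ref{thm-homincidence}) before passing to the affine statement, whereas you fold the projectivization directly into one argument.
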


We also prove an analog of Theroem~\ref{thm-lineincidence-intro} for higher degree curves. We refer to a curve as a degree $r$ parametric curve if it is given as the image of a polynomial map (in one variable) of degree at most $r$.

\begin{thm}\label{thm-curveincidence-intro}
	 Let $\gamma_1,\ldots,\gamma_n \subset \C^d$ be degree $r$ parametric curves such that each $\gamma_i$  has at least $k$  incidences with the other curves such that, among those $k$ incidences, at most $k/2r$ have the same intersection point on $\gamma_i$. Then, the $n$ curves are contained in a subspace of dimension at most $ \frac{2(r+1)^4n}{k}$.
\end{thm}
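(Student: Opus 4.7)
The plan is to generalize the proof of the line case (Theorem~\ref{thm-lineincidence-intro}) to the block setting. Write each curve as $\gamma_i(s) = \sum_{j=0}^r v_{i,j} s^j$ with $v_{i,j} \in \C^d$ and stack these coefficients as the rows of a scalar matrix $P$ of size $n(r+1) \times d$; after a translation bringing one curve through the origin, $\rank(P)$ upper bounds the dimension of any ambient subspace containing all $n$ curves. We build a block design matrix $A$ with block size $1 \times (r+1)$, $nk$ block rows, and $n$ block columns so that $AP = 0$, and then conclude $\rank(P) \leq n(r+1) - \rank(A)$ via Theorem~\ref{thm-rank-design}. Concretely, set $T_s := (1, s, s^2, \ldots, s^r)^T \in \C^{r+1}$. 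For each curve $\gamma_i$ fix, using the hypothesis, a family $\cE_i$ of $k$ incidences involving $\gamma_i$ in which no intersection point on $\gamma_i$ is used more than $k/(2r)$ times. For each pair $(\gamma_i, e)$ with $e \in \cE_i$---say $e$ is an incidence between $\gamma_i$ and $\gamma_j$ at parameters $s_i^e$ and $s_j^e$---add a row to $A$ carrying the block $T_{s_i^e}^T$ in column $i$, the block $-T_{s_j^e}^T$ in column $j$, and zeros elsewhere. The incidence identity $\gamma_i(s_i^e) = \gamma_j(s_j^e)$ becomes $T_{s_i^e}^T P_i - T_{s_j^e}^T P_j = 0$, where $P_i$ denotes the $(r+1) \times d$ sub-block of $P$ for $\gamma_i$; this is exactly the corresponding row of $AP$, so $AP = 0$.

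We verify that $A$ is a $(2, k, 2(r+1)^2)$-design matrix. Each row has two non-zero blocks, so $q = 2$. Each column $i$ contains at least $k$ non-zero blocks, namely $\{T_{s_i^e}^T\}_{e \in \cE_i}$ from the $k$ rows associated with pairs $(\gamma_i, e)$; we will use these as the distinguished well-spread subset. Two distinct parametric curves of degree $\leq r$ in $\C^d$ intersect in at most $r^2$ points by Bezout, so the number of rows with non-zero blocks in any two given columns $i, j$ is at most $2r^2 \leq 2(r+1)^2$, the factor $2$ accounting for the pairs $(\gamma_i, e)$ and $(\gamma_j, e)$ associated with the same incidence $e$. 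For the well-spread property, consider any subspace $V \subseteq \C^{r+1}$ with orthogonal complement $W$ of dimension $w$: we must show $\#\{e \in \cE_i : T_{s_i^e} \in W\} \leq kw/(r+1)$. By the Vandermonde identity, any $w$-dimensional subspace $W \subseteq \C^{r+1}$ contains at most $w$ moment-curve vectors $T_s$ with distinct parameters $s$ (with the case $w = r+1$ being automatic). Combining this with the multiplicity hypothesis that each distinct intersection point on $\gamma_i$ contributes at most $k/(2r)$ elements to $\cE_i$, we get
\[
\#\{e \in \cE_i : T_{s_i^e} \in W\} \leq w \cdot \tfrac{k}{2r} \leq \tfrac{kw}{r+1},
\]
the last inequality using $r \geq 1$.

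Invoking Theorem~\ref{thm-rank-design} with block row dimension $1$, block column dimension $r+1$, and parameters $(q, k, t) = (2, k, 2(r+1)^2)$, we compute $X = k/((r+1)t) = k/(2(r+1)^3)$ and
\[
\rank(P) \leq n(r+1) - \rank(A) \leq \frac{(r+1)n}{1+X} = \frac{(r+1)^2 n t}{(r+1)t + k} \leq \frac{(r+1)^2 n t}{k} = \frac{2(r+1)^4 n}{k},
\]
as required. The main obstacle is the well-spread verification, which marries the Vandermonde linear independence of moment-curve evaluations with the multiplicity hypothesis; bounding $t$ via Bezout and checking $q$ are routine provided the curves are pairwise distinct.
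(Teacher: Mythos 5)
Your proposal is correct and follows essentially the same route as the paper's proof: stack the coefficient vectors of the curves into a matrix, build a $1\times(r+1)$-block design matrix from moment-vector rows coming from the incidences so that it annihilates the coefficient matrix, bound the pairwise column overlap via Bezout, verify well-spreadness of each column via Vandermonde independence combined with the multiplicity hypothesis, and apply Theorem~\ref{thm-rank-design}. The only differences are bookkeeping: you add a row for each (curve, incidence) pair and check the well-spread inequality directly for all $k$ blocks (with $t \le 2(r+1)^2$), whereas the paper adds one row per intersection ($t = r^2$) and greedily extracts $k' \ge k/2$ blocks partitioned into Vandermonde bases; both give the same $2(r+1)^4 n/k$ bound.
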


\subsection{Organization}
In Section~\ref{sec-scalingcapacity} we develop the necessary machinery for scaling of matrices with block entries. In Section~\ref{sec-rankdesign} we prove our main theorem, Theorem~\ref{thm-rank-design}. In Section~\ref{sec-rigidity} we give the applications for geometric rigidity. In Section~\ref{sec-highsg} we prove our improved Sylvester-Gallai theorem for subspaces (Theorem~\ref{thm-tightsg}). In Section~\ref{sec-incidence} we prove Theorem~\ref{thm-lineincidence-intro} and its generalization for higher degree curves.

\section{Matrix scaling and capacity}\label{sec-scalingcapacity}
In this section we develop the machinery needed to prove Theorem~\ref{thm-rank-design}.
We denote by $I_s \in \M_{s,s}(1,1)$ the $s \times s$ identity matrix. For a matrix $A$ 
we denote $\|A\|_2^2 = \tr(AA^*).$

\begin{define}[Row Normalization]
	Let $A \in \M_{m,n}(r,c)$. For each $i \in [m]$ let $$R_i(A) = \sum_{j=1}^n A_{i, j}A_{i, j}^* \in \M_{r,r}(1,1).$$ If all matrices $R_i(A)$ are non singular (and hence, positive definite) we define the {\em row normalizing matrix} of $A$ as the matrix $R(A) \in \M_{m,m}(r,r)$ whose diagonal blocks are the matrices $R(A)_{i,i} = (R_i(A))^{-1/2}.$ We define the {\em row normalization} of $A$ as the product
   $\row(A) = R(A)\cdot A$. If $R_i(A) = I_r$ for all $i \in [m]$ we say that $A$ is {\em row normalized}.
\end{define}

\begin{define}[Column Normalization]
Let $A \in \M_{m,n}(r,c)$. For each $j \in [n]$ let $$C_j(A) = \frac{nc}{mr}\sum_{i=1}^m A_{i, j}^*A_{i, j} \in \M_{c,c}(1,1).$$ If all matrices $C_j(A)$ are non singular (and hence, positive definite) we define the {\em column normalizing matrix} of $A$ as the matrix $C(A) \in \M_{n,n}(c,c)$ whose diagonal blocks are the matrices $C(A)_{j,j} = \left(C_j(A)\right)^{-1/2}.$ We define the {\em column normalization} of $A$ as the product
   $\col(A) = A\cdot C(A)$. If $C_j(A) = I_c$ for all $j \in [n]$ we say that $A$ is {\em column normalized}.
\end{define}

\begin{define}[Doubly stochastic block matrices] A matrix $A \in \cM_{m,n}(r, c)$ is said to be
\emph{doubly stochastic} if it is both row normalized and column normalized.	We define the distance of $A$ to a doubly stochastic matrix, denoted by $\ds(A)$, 
	as 
	$$ \ds(A) = \sum_{j=1}^n \left\|C_j(A) - I_c\right\|_2^2 + \sum_{i=1}^m \left\| R_i(A) - I_r \right\|_2^2.$$
\end{define}

\begin{define}[Matrix scaling]
Let $A \in \M_{m,n}(r,c)$. A {\em scaling} of $A$ is a matrix $B \in \M_{m,n}(r,c)$ obtained as follows: Let $R_1,\ldots,R_m \in \M_{r,r}(1,1)$ and $C_1,\ldots,C_n \in \M_{c,c}(1,1)$ be non-singular complex matrices. We refer to the $R_i$'s as {\em row scaling coefficients} and to the $C_j$'s as {\em column scaling coefficients}. Now, we let $B_{ij} = R_i\cdot A_{ij}\cdot C_j$. Notice that, if $B$ is a scaling of $A$ than $A$ is a scaling of $B$. 
\end{define}

We would like to understand when a matrix has a doubly stochastic scaling. For technical reasons, it is more natural to ask when a matrix can be scaled to be arbitrarily close to doubly stochastic. This question turns out to have a much nicer answer and, for  our purposes, an `almost' doubly stochastic matrix will do just fine.

\begin{define}[Scalable Matrices]
A matrix $A \in \M_{m,n}(r,c)$ is said to be {\em scalable} if, for every $\eps >0$ there exist a scaling $B$ of $A$ such that $\ds(B) \leq \eps$. 
\end{define}

Our goal is to give sufficient conditions for a matrix to be scalable. For this we need to define a measure called {\em capacity} which is a generalization of capacity of non-negative matrices defined in \cite{GurYianilos} (used to study Sinkhorn's algorithm) and a special case of capacity of operators defined in \cite{gurvits2004} (used to study an operator generalization of Sinkhorn's algorithm).

\begin{define}[Capacity]\label{def:capacity}
	The capacity of a block matrix 	$A \in \M_{m,n}(r, c)$ is defined as:	
	$$\capac(A) =  \inf \left\{ \prod_{j=1}^n \det\left(  \frac{nc}{mr}\sum_{i=1}^m A_{ij}^* X_i A_{ij} \right) \ : \
	X_i \succ 0  \text{ and } \prod_{i=1}^m \det(X_i) = 1 \right\}.$$ Where  the  $X_i$'s  are $r\times r$ complex Hermitian positive definite matrices.
\end{define}

The main technical result of this section is given in the following theorem. We will prove it at the end of the section, following some preliminaries. The proof will mimic  the analog result for scalar matrices (Sinkhorn's algorithm) using alternate left/right scaling and using the capacity as a progress measure (this is also the approach taken in \cite{GGOW15} for operator scaling).
\begin{lem}\label{lem-capacity-scalable}
	Let $A \in \M_{m,n}(r,c)$. If $\capac(A)>0$ then $A$ is scalable.
\end{lem}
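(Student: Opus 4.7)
The plan is a Sinkhorn-style argument that uses $\capac$ as a monotone progress measure. First I would check that the hypothesis $\capac(A)>0$ makes the iteration well-defined: if $R_i(A)$ were singular, then picking $v\in\ker(R_i(A))$ (so that $v^*A_{ij}=0$ for all $j$) and setting $X_i=I_r+tvv^*$ for large $t$, with the remaining $X_{i'}$ uniformly rescaled by a small scalar to preserve $\prod_i\det(X_i)=1$, would drive the infimum defining $\capac(A)$ to zero, a contradiction. The same argument rules out singular $C_j(A)$, so both $\row$ and $\col$ are defined on $A$ and on any scaling of it.

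Next, by the substitution $Y_i=R_i^*X_iR_i$ followed by a constant rescaling to restore $\prod_i\det(Y_i)=1$, one obtains the scaling identity
$$\capac(B)=\Bigl(\prod_i|\det R_i|\Bigr)^{2nc/(mr)}\cdot\Bigl(\prod_j|\det C_j|\Bigr)^{2}\cdot\capac(A)$$
for any scaling $B_{ij}=R_iA_{ij}C_j$. Specializing to $R_i=R_i(A)^{-1/2},\, C_j=I$ gives $\capac(\row A)=\bigl(\prod_i\det R_i(A)\bigr)^{-nc/(mr)}\capac(A)$, and symmetrically $\capac(\col A)=\bigl(\prod_j\det C_j(A)\bigr)^{-1}\capac(A)$. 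Whenever $A$ is column-normalized, the identity $\sum_i\tr R_i(A)=mr$ combined with AM-GM (first on the eigenvalues within each $R_i(A)$, then on the traces across $i$) yields $\prod_i\det R_i(A)\leq 1$, with equality only if every $R_i(A)=I_r$; hence $\capac(\row A)\geq\capac(A)$. The same bound holds symmetrically for $\capac(\col A)$ when $A$ is row-normalized. Starting from $A_0=\col(\row(A))$ and iterating $B_k=\row(A_k)$, $A_{k+1}=\col(B_k)$, the sequence $\capac(A_k)$ is therefore non-decreasing; plugging $X_i=I_r$ into the definition of $\capac$ applied to the column-normalized $A_k$ shows $\capac(A_k)\leq\prod_j\det C_j(A_k)=1$, so $\capac(A_k)$ converges.

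The convergence forces both per-step multiplicative gains to tend to $1$; in particular $\prod_j\det C_j(B_k)\to 1$. Since $B_k$ is row-normalized we also have $\sum_j\tr C_j(B_k)=nc$, so a quantitative version of AM-GM (applied to the eigenvalues inside each $C_j(B_k)$ and to the traces across $j$) forces $C_j(B_k)\to I_c$ for every $j$. Because $B_k$ is row-normalized, $\ds(B_k)=\sum_j\|C_j(B_k)-I_c\|_2^2\to 0$, and since each $B_k$ is a scaling of $A$ this exhibits scalings of $A$ arbitrarily close to doubly stochastic, establishing scalability. The main technical obstacle I foresee is making this last step quantitative, i.e., extracting a uniform modulus of continuity from the stability of AM-GM so that $\prod_j\det C_j(B_k)\to 1$ together with the trace constraint implies $C_j(B_k)\to I_c$; once this is in hand, the rest reduces to careful bookkeeping of the scaling formula and the alternating normalizations.
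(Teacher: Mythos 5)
Your plan is essentially the paper's own proof: alternate row/column normalization Sinkhorn-style, track $\capac$ through the scaling identity (Claim~\ref{cla-capofscaling}), bound it above by $1$ for column-normalized matrices (Claim~\ref{cla-capac-normalized}), and turn stagnation of the capacity into $\ds\to 0$ via a stability version of AM-GM. The paper runs this as a contradiction (if $\ds\geq\eps$ at every round, the capacity grows by $e^{\eps/6}$ per round, Claim~\ref{cla-capacity-progress}), whereas you phrase it as monotone bounded convergence forcing the per-step gains to $1$; these are the same argument, and the ``technical obstacle'' you foresee at the end is precisely the paper's quantitative AM-GM, Claim~\ref{cla-quant-AMGM}, which is elementary and already gives the modulus you want (product close to $1$ plus trace constraint forces $\sum_j\|C_j(B_k)-I_c\|_2^2$ small).

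The one step that needs repair is your non-degeneracy argument. If $v\in\ker R_i(A)$ and you take $X_i=I_r+tvv^*$ while shrinking only the \emph{other} $X_{i'}$, the objective tends to $\prod_j\det\bigl(\tfrac{nc}{mr}A_{ij}^*A_{ij}\bigr)$ as $t\to\infty$, and this need not vanish when $r>c$: the blocks of row $i$ can all have full column rank even though they share the left kernel vector $v$ (e.g.\ $r=2$, $c=1$, all $A_{ij}=(1,0)^{\top}$), so this family does not by itself drive the infimum to zero. The fix is to rescale \emph{all} the $X_{i'}$'s, including $X_i$, by a common $\lambda$ with $\lambda^{mr}\det(I_r+tvv^*)=1$: since $A_{ij}^*v=0$, the $t$-dependence cancels inside every column sum and the objective becomes $\lambda^{nc}\prod_j\det C_j(A)\to 0$ (equivalently, pass to the scale-invariant form of $\capac$ and let $\det X_i\to\infty$ with the numerator fixed). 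Alternatively, do what the paper does: deduce nonsingularity of the $R_i(A)$'s from $\capac(A^*)>0$ via Claim~\ref{cla-transpose}, and propagate well-definedness to all iterates by Claim~\ref{cla-rowcoldefined} (or by your scaling-identity remark). With that repair your argument is complete.
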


The proof of the lemma will be using an iterative algorithm that, at each step performs row/column normalization of $A$. We will show that this process must converge to a doubly stochastic matrix. We start with some useful claims. The first claim relates the capacity of $A$ with that of $A^*$. For our purposes, we will only need to use the fact that, if one of them is zero, then so is the other.

\begin{claim}\label{cla-transpose}
Let $A \in \M_{m,n}(r,c)$. Then
$$
\capac(A)^{1/nc} = \frac{nc}{mr} \cdot \capac(A^*)^{1/mr}
$$
\end{claim}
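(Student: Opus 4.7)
The plan is to reduce the identity to a cleaner equivalent statement about ``normalized'' infima, prove the reduction using a matrix AM--GM inequality together with a well-chosen test point, and close with a symmetry argument.

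First, pull the scalar $\tfrac{nc}{mr}$ out of each of the $n$ determinants in the definition: since $\det(\tfrac{nc}{mr}M)=(\tfrac{nc}{mr})^c\det(M)$ for $c\times c$ $M$, one gets $\capac(A)=(\tfrac{nc}{mr})^{nc}\mu(A)$, where
$$\mu(A):=\inf\Bigl\{\prod_{j=1}^n\det\Bigl(\sum_{i=1}^m A_{ij}^*X_iA_{ij}\Bigr):\ X_i\succ 0,\ \prod_i\det X_i=1\Bigr\}.$$
Analogously $\capac(A^*)=(\tfrac{mr}{nc})^{mr}\mu(A^*)$, with the corresponding infimum involving $N_i(Y):=\sum_jA_{ij}Y_jA_{ij}^*$. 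The identity to prove reduces to $\mu(A^*)^{1/mr}=\tfrac{nc}{mr}\mu(A)^{1/nc}$.

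The key lemma is a matrix AM--GM inequality: for positive-definite $k\times k$ matrices $M,N$, the eigenvalues of $MN$ coincide with those of the PSD matrix $N^{1/2}MN^{1/2}$, so $\tr(MN)\ge k(\det M\det N)^{1/k}$. Applying this (with $k=r$) to each pair $(X_i,N_i(Y))$ and then the scalar AM--GM over $i\in[m]$ produces the ``crossed'' estimate
$$\prod_i\det X_i\cdot\prod_i\det N_i(Y)\ \le\ \Bigl(\tfrac{1}{mr}\sum_i\tr\bigl(X_iN_i(Y)\bigr)\Bigr)^{mr},$$
valid for all positive-definite tuples $X_i,Y_j$.

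Now fix any $X$ feasible for $\mu(A)$, write $S_j:=\sum_iA_{ij}^*X_iA_{ij}$, and apply the crossed estimate with $Y_j:=S_j^{-1}$: by trace cyclicity $\sum_i\tr(X_iN_i(S^{-1}))=\sum_j\tr(S_jS_j^{-1})=nc$, so (using $\prod_i\det X_i=1$) the estimate simplifies to $\prod_i\det N_i(S^{-1})\le(\tfrac{nc}{mr})^{mr}$. Next define the actual test point $\widetilde Y_j:=\alpha\,S_j^{-1}$ with $\alpha:=(\prod_j\det S_j)^{1/nc}$, chosen so that $\prod_j\det\widetilde Y_j=1$, making $\widetilde Y$ feasible for $\mu(A^*)$. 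Since $N_i(\widetilde Y)=\alpha N_i(S^{-1})$,
$$\prod_i\det N_i(\widetilde Y)\ =\ \alpha^{mr}\prod_i\det N_i(S^{-1})\ \le\ \bigl(\textstyle\prod_j\det S_j\bigr)^{mr/nc}\bigl(\tfrac{nc}{mr}\bigr)^{mr},$$
and taking the infimum over $X$ yields $\mu(A^*)^{1/mr}\le\tfrac{nc}{mr}\mu(A)^{1/nc}$. The reverse inequality follows by running the same argument with $A^*$ in place of $A$ (whose parameters are $(n,m,c,r)$ and for which $(A^*)^*=A$); the two combine into the claimed equality. The only genuinely clever ingredient is guessing the form of $\widetilde Y$; once it is written down, the verification is bookkeeping with AM--GM and needs no attainment of the infimum, since the argument applies to every feasible $X$ and therefore to any minimizing sequence.
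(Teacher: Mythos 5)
Your proof is correct and is essentially the paper's own argument: the same test point proportional to $S_j^{-1}$, the same matrix AM--GM plus trace cyclicity producing the constant $nc$, and the same symmetry between $A$ and $A^*$ for the reverse inequality; the paper merely phrases it as an inf--sup bound on the ratio $\capac(A)^{1/nc}/\capac(A^*)^{1/mr}$ under the assumption that the denominator is nonzero, whereas you normalize away the scalar and exhibit an explicit feasible $\widetilde{Y}$. One small caveat, shared with the paper: the substitution $Y_j=S_j^{-1}$ silently assumes each $S_j$ is invertible, which fails exactly when some column of $A$ (or, for the reverse direction, of $A^*$) has a nontrivial common kernel; in that degenerate case both capacities are $0$ and the identity holds trivially, but that requires a one-line separate observation rather than the assertion that the argument ``applies to every feasible $X$''.
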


\begin{proof}
Let $\text{PD}_k$ denote the set of $k \times k$ Hermitian positive definite matrices. Notice that
\begin{align*}
\capac(A)^{1/nc} &= \inf \left\{ \prod_{j=1}^n \det\left(  \frac{nc}{mr}\sum_{i=1}^m A_{ij}^* X_i A_{ij} \right)^{1/nc} \ : \
	X_i \succ 0  \text{ and } \prod_{i=1}^m \det(X_i) = 1 \right\} \\
&=  \inf \left\{ \frac{\prod_{j=1}^n \det\left(  \frac{nc}{mr}\sum_{i=1}^m A_{ij}^* X_i A_{ij} \right)^{1/nc}}{ \prod_{i=1}^m \det(X_i)^{1/mr}} \ : \
	X_i \succ 0  \right\}.
\end{align*}
Similarly
$$
\capac(A^*)^{1/mr} = \inf \left\{ \frac{\prod_{i=1}^m \det\left(  \frac{mr}{nc}\sum_{j=1}^n A_{ij} Y_j A_{ij}^* \right)^{1/mr}}{ \prod_{j=1}^n \det(Y_j)^{1/nc}} \ : \
	Y_j \succ 0  \right\}.
$$
Suppose for now that, $\capac(A^*)$ is non-zero. We have
\begin{align*}
&\frac{\capac(A)^{1/nc}}{\capac(A^*)^{1/mr}} = \\ &\text{inf}_{X_i \in \text{PD}_r} \: \text{sup}_{Y_j \in \text{PD}_c} \: \left\{ \frac{\prod_{j=1}^n\left[ \det\left(  \frac{nc}{mr}\sum_{i=1}^m A_{ij}^* X_i A_{ij} \right) \cdot  \det(Y_j) \right]^{1/nc}}{\prod_{i=1}^m \left[ \det\left(  \frac{mr}{nc}\sum_{j=1}^n A_{ij} Y_j A_{ij}^* \right) \cdot  \det(X_i) \right]^{1/mr}} \right\} \ge \\
& \text{inf}_{ X_i \in \text{PD}_r} \: \left\{ \frac{\prod_{j=1}^n\left[ \det\left(  \frac{nc}{mr}I_c \right) \right]^{1/nc}}{\prod_{i=1}^m \left[ \det\left(  \frac{mr}{nc}\sum_{j=1}^n A_{ij} \widetilde{Y_j} A_{ij}^* \right) \cdot  \det(X_i) \right]^{1/mr}} \right\} =\\
&\frac{nc}{mr} \cdot  \text{inf}_{X_i \in \text{PD}_r} \: \left\{ \frac{1}{\prod_{i=1}^m \left[ \det\left(  \frac{mr}{nc}\sum_{j=1}^n A_{ij} \widetilde{Y_j} A_{ij}^* \right) \cdot  \det(X_i) \right]^{1/mr}} \right\},
\end{align*}
where $\widetilde{Y_j} = \left( \sum_{i=1}^m A_{ij}^* X_i A_{ij}\right)^{-1}$. Continuing:
\begin{align*}
\frac{\capac(A)^{1/nc}}{\capac(A^*)^{1/mr}} &\ge \frac{nc}{mr} \cdot  \text{inf}_{ X_i \in \text{PD}_r} \: \left\{ \frac{1}{\frac{1}{nc} \cdot \sum_{i=1}^m   \sum_{j=1}^n \tr \left[A_{ij} \widetilde{Y_j} A_{ij}^*X_i \right]} \right\} \\
&=\frac{nc}{mr} \cdot  \text{inf}_{X_i \in \text{PD}_r} \: \left\{ \frac{1}{\frac{1}{nc} \cdot \sum_{j=1}^n   \tr\left[\sum_{i=1}^m \widetilde{Y_j} A_{ij}^*X_i A_{i,j}\right]} \right\} \\
&= \frac{nc}{mr} \cdot  \text{inf}_{X_i \in \text{PD}_r} \: \left\{ \frac{1}{\frac{1}{nc} \cdot \sum_{j=1}^n   \tr\left[I_c\right]} \right\} \\
&= \frac{nc}{mr},
\end{align*}
where the first inequality follows from the AM-GM inequality, applied to the (non negative) eigenvalues of a PSD matrix. In the other direction, we apply a similar argument  to $A^*$ to obtain  that, if $\capac(A)$ is nonzero then
$$
\frac{\capac(A^*)^{1/mr}}{\capac(A)^{1/nc}} \ge \frac{mr}{nc}
$$
Rearranging completes the proof. 
\end{proof}

\begin{claim}[Capacity of normalized matrices]\label{cla-capac-normalized}
	Let $A \in \M_{m,n}(r,c)$ be a column-normalized matrix. Then $\capac(A) \leq 1$.
\end{claim}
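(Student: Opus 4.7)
The plan is to exhibit an explicit choice of the $X_i$'s in the definition of $\capac(A)$ that attains value $1$, which by the infimum definition will immediately give $\capac(A) \le 1$.

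First I would recall that $A$ being column-normalized means, by definition, that $C_j(A) = I_c$ for every $j \in [n]$, i.e.\ $\frac{nc}{mr}\sum_{i=1}^m A_{ij}^* A_{ij} = I_c$. This identity is precisely the expression that appears inside the determinant in Definition~\ref{def:capacity} when one sets all the variables $X_i$ to be the identity.

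So the natural candidate is to take $X_i = I_r$ for each $i \in [m]$. This is a valid choice in the infimum: each $I_r$ is positive definite, and $\prod_{i=1}^m \det(I_r) = 1$. With this choice, for every $j \in [n]$,
$$
\det\!\left(\frac{nc}{mr}\sum_{i=1}^m A_{ij}^* X_i A_{ij}\right) = \det\!\left(\frac{nc}{mr}\sum_{i=1}^m A_{ij}^* A_{ij}\right) = \det(I_c) = 1,
$$
and hence the product over $j$ equals $1$. Taking the infimum over the (possibly larger) class of all admissible tuples $(X_i)$ can only give a value $\le 1$, which yields the claim.

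There is no real obstacle here: the statement is essentially a consistency check that the normalization in the definition of capacity matches the normalization in the definition of a column-normalized matrix. The only thing to verify carefully is that $I_r \succ 0$ and satisfies the determinantal constraint, both of which are immediate. A brief remark could be added that equality $\capac(A)=1$ is in fact achieved precisely when $A$ is doubly stochastic, but that is not needed for the lemma as stated.
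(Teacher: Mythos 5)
Your proof is correct and is essentially identical to the paper's: both substitute $X_i = I_r$ (clearly admissible since $\det(I_r)=1$ and $I_r \succ 0$) and use the column-normalization identity to get each determinant equal to $\det(I_c)=1$, so the infimum is at most $1$. The only caveat is your closing aside about equality holding exactly when $A$ is doubly stochastic, which is unproven and not needed, so it is best omitted.
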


\begin{proof}
	Notice that 
		\begin{eqnarray*}
			\capac(A) &\leq& \prod_{j=1}^n \det\left( \frac{nc}{mr}\sum_{i=1}^m A_{ij}^* I_r A_{ij} \right) = \prod_{j=1}^n \det(I_c) = 1.
		\end{eqnarray*}
\end{proof}

\begin{claim}[Capacity of a scaling]\label{cla-capofscaling}
		Let $A \in \M_{m,n}(r,c)$ and let $B$ be a scaling of $A$ with row scaling coefficients $R_1,\ldots,R_m \in \M_{r,r}(1,1)$ and column scaling coefficients $C_1,\ldots,C_n \in \M_{c,c}(1,1)$. Then
				$$ \capac(B) =  \left(\prod_{j=1}^n |\det(C_j)| \right)^2 \left(\prod_{i=1}^m |\det(R_i)| \right)^{2nc/mr} \cdot\capac(A). $$
\end{claim}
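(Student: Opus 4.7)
The plan is to start from the definition of $\capac(B)$, substitute $B_{ij} = R_i A_{ij} C_j$ into the objective, and then perform a bijective reparametrization of the optimization domain so that the problem reduces to the one defining $\capac(A)$, up to an explicit multiplicative factor. The two scaling coefficients $C_j$ and $R_i$ will be peeled off in two separate steps, each producing one of the two products in the claimed identity.

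First, observe that $B_{ij}^* X_i B_{ij} = C_j^* A_{ij}^* (R_i^* X_i R_i) A_{ij} C_j$. Factoring $C_j^*$ and $C_j$ out of the sum over $i$, and using the identity $\det(C_j^* M C_j) = |\det(C_j)|^2 \det(M)$ for a $c \times c$ matrix $M$, each of the $n$ factors in the objective contributes a scalar $|\det(C_j)|^2$, which together produce the desired prefactor $\prod_{j=1}^n |\det(C_j)|^2$. What remains inside each determinant is $\tfrac{nc}{mr}\sum_i A_{ij}^* Y_i A_{ij}$ with $Y_i := R_i^* X_i R_i$. Since each $R_i$ is non-singular, the map $X_i \mapsto Y_i$ is a homeomorphism of the positive definite cone, and $\det(Y_i) = |\det(R_i)|^2 \det(X_i)$, so the constraint $\prod_i \det(X_i) = 1$ translates to $\prod_i \det(Y_i) = P$, where $P := \prod_{i=1}^m |\det(R_i)|^2$.

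The final step is to rescale to the normalization used in $\capac(A)$. Setting $Y_i = P^{1/(mr)} Z_i$ gives a bijection from the constraint set $\{\prod_i \det(Y_i) = P\}$ to $\{\prod_i \det(Z_i) = 1\}$. Each inner $c \times c$ matrix $\tfrac{nc}{mr}\sum_i A_{ij}^* Y_i A_{ij}$ is scaled by the positive scalar $P^{1/(mr)}$, so its determinant picks up a factor $P^{c/(mr)}$; taking the product over all $n$ columns yields the total factor $P^{nc/(mr)} = \bigl(\prod_{i=1}^m |\det(R_i)|\bigr)^{2nc/mr}$. The infimum over the $Z_i$ is now exactly $\capac(A)$, and combining with the $C_j$ prefactor from the previous step produces the claimed identity.

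The argument is essentially pure bookkeeping, and the only subtlety I expect is tracking the two different determinant-rescaling rules: $\det(\alpha M) = \alpha^c \det(M)$ for the $c \times c$ inner matrices versus $\det(\alpha M) = \alpha^r \det(M)$ implicit in the substitution $Y_i = P^{1/(mr)} Z_i$. It is precisely this mismatch of block sizes $r$ and $c$ that produces the somewhat asymmetric exponent $2nc/(mr)$ on the $R_i$ product, and getting this exponent right is the one place where a careless calculation would fail.
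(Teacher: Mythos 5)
Your proposal is correct and follows essentially the same route as the paper: substitute $B_{ij}=R_iA_{ij}C_j$, pull out the $C_j$'s via $\det(C_j^*MC_j)=|\det(C_j)|^2\det(M)$, reparametrize $Y_i=R_i^*X_iR_i$, and then rescale the $Y_i$'s by a common scalar to restore the constraint $\prod_i\det(Y_i)=1$, which yields the exponent $2nc/mr$. The only difference is that you carry out this last rescaling explicitly (with the correct bookkeeping of block sizes), whereas the paper leaves it as a one-line remark.
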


\begin{proof}
	\begin{align*}
		\capac(B) 
		&= \inf \left\{ \prod_{j=1}^n \det\left( \frac{nc}{mr}\sum_{i=1}^m 
		C_j^*  A_{ij}^* R_i^* X_i R_i A_{ij} C_j  \right) \ : \
		X_i \succ 0 \text{ and } \prod_{i=1}^m \det(X_i) = 1  \right
		\} \\
		&= \left(\prod_{j=1}^n |\det(C_j)| \right)^2 \cdot \inf \left\{  \prod_{j=1}^n \det\left( \frac{nc}{mr}\sum_{i=1}^m  A_{ij}^* Y_i  A_{ij} \right) \ : \
		Y_i \succ 0 \text{ and } \prod_{i=1}^m \det(Y_i) = \prod_{i=1}^m |\det(R_i)|^2  \right\} \\
		&=   \left(\prod_{j=1}^n |\det(C_j)| \right)^2\left(\prod_{i=1}^m |\det(R_i)| \right)^{2nc/mr} \cdot \capac(A),
	\end{align*}
Where the last equality is obtained by observing the effect of scaling all the $Y_i$'s by the same constant $\alpha$ on the capacity.
\end{proof}

To prove a quantitative bound on the rate of growth of the capacity under row/column scaling we will need the following quantitative variant of the AM-GM inequality. A proof (along the lines of \cite{LSW}) is given  for completeness.

\begin{claim}(Quantitative AM-GM)\label{cla-quant-AMGM}
Let $x_1,\ldots,x_s>0$ be real numbers such that $\sum_{i \in [s]}x_i=s$ and $\sum_{i \in [s]} (x_i -1)^2 = \eps$. Then
$$ \prod_{i\in [s]}x_i \leq \max\left\{ e^{-\eps/6}, e^{-1/6} \right\}.$$
\end{claim}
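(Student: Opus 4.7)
The plan is to substitute $y_i = x_i - 1$, which turns the hypotheses into $y_i > -1$, $\sum_{i} y_i = 0$, and $\sum_i y_i^2 = \epsilon$, and reduces the target inequality to an upper bound on $\sum_i \log(1+y_i)$. I will split into two regimes depending on whether every $y_i$ lies in $(-1, 1]$ or some $y_i$ exceeds $1$; since positivity of the $x_i$ precludes $y_i \leq -1$, these two cases are exhaustive.

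In the first regime I plan to use the sharp pointwise estimate $\log(1+y) \leq y - y^2/6$, valid for $y \in (-1, 1]$. To justify this I will examine $h(y) := y - y^2/6 - \log(1+y)$: one has $h(0) = h'(0) = 0$, and $h''(y) = (1+y)^{-2} - 1/3$, which is positive on $(-1, \sqrt{3}-1)$ and negative beyond. On $(-1,0]$ the convexity of $h$ together with the critical point at $0$ forces $h \geq 0$; on $[0, 1]$ one notes that $h'$ rises from $0$ up to $\sqrt{3}-1$ and then decreases, but still satisfies $h'(1) = 1/6 > 0$, so $h$ is increasing from $h(0)=0$ throughout $[0,1]$. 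Summing the pointwise estimate over $i$ and using $\sum y_i = 0$ gives $\sum \log(1+y_i) \leq -\epsilon/6$, hence $\prod x_i \leq e^{-\epsilon/6}$, which is at most $\max\{e^{-\epsilon/6}, e^{-1/6}\}$.

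In the second regime, some $x_i > 2$; relabel so that $x_1 > 2$. Here $e^{-\epsilon/6}$ need not bound the product, so I switch to a cruder AM-GM argument applied to the remaining $s-1$ variables, which have sum $s - x_1$. This yields $\prod_{i \geq 2} x_i \leq \bigl((s-x_1)/(s-1)\bigr)^{s-1}$, and hence $\prod_i x_i \leq g(x_1)$ with $g(u) := u \bigl((s-u)/(s-1)\bigr)^{s-1}$. A short derivative calculation shows $g'(u)$ has the sign of $1-u$, so on $[2, s]$ the function $g$ is decreasing and its maximum is $g(2) = 2(1-1/(s-1))^{s-1}$. The standard inequality $(1+z)^n \leq e^{nz}$ bounds this by $2/e$, and $2/e < e^{-1/6}$, which completes this case.

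The main obstacle is the verification in the first regime: the pointwise inequality $\log(1+y) \leq y - y^2/6$ must hold on the entire interval $(-1, 1]$ rather than merely a neighborhood of $0$, and the constant $1/6$ is essentially calibrated so that the bound survives all the way to the endpoint $y = 1$, where $h$ has already lost convexity. The monotonicity of $h$ on $[\sqrt{3}-1, 1]$ must therefore be checked with some care.
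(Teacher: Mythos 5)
Your proof is correct, but it follows a genuinely different route from the paper's. The paper makes the same substitution $y_i=x_i-1$ and then splits on the size of $\eps$: for $\eps\le 1$ it sums the pointwise bound $1+t\le e^{t-t^2/2+t^3/3}$ and uses $\sum_i y_i^3\le\bigl(\sum_i y_i^2\bigr)^{3/2}$ to get $e^{-\eps/6}$, and for $\eps>1$ it uses a scaling trick: $f(z)=\prod_i(1+zy_i)$ is non-increasing on $[0,1]$ because $\frac{d}{dz}\ln f(z)=\sum_i\frac{y_i}{1+zy_i}\le\sum_i y_i=0$, so $\prod_i x_i=f(1)\le f(\eps^{-1/2})\le e^{-1/6}$, the last step being the small-$\eps$ case applied to the rescaled variables. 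You instead split on whether some $x_i$ exceeds $2$. When all $y_i\in(-1,1]$, your sharper pointwise estimate $\log(1+y)\le y-y^2/6$ (whose verification up to $y=1$ you carry out correctly, including the loss of convexity past $\sqrt{3}-1$ and the check $h'(1)=1/6>0$) gives $\prod_i x_i\le e^{-\eps/6}$ with no restriction on $\eps$, which is in fact slightly stronger than the stated claim in that regime; when some $x_1>2$ (which forces $\eps>1$ and $s\ge 3$, so the AM-GM step on the remaining $s-1$ variables is well-posed) you discard $\eps$ and obtain $\prod_i x_i\le g(x_1)\le g(2)=2\bigl(1-\tfrac{1}{s-1}\bigr)^{s-1}\le 2/e<e^{-1/6}$, and your sign computation for $g'$ is right. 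Both arguments are elementary; the paper's scaling trick is arguably slicker for large $\eps$ (it reduces every $\eps$ to $\eps=1$), while your decomposition isolates exactly where $e^{-\eps/6}$ can fail --- only when some variable exceeds $2$ --- and disposes of that case by a crude, self-contained optimization.
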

\begin{proof}
First assume that $\eps \leq 1$. Let $y_i = x_i-1$ so that $\sum_i y_i = 0$ and $\sum_i y_i^2 = \eps$. Using the inequality $1+t \leq e^{t-t^2/2+t^3/3}$ which holds for all real  $t$ we get that
\begin{eqnarray*}
	\prod_{i=1}^s(1+y_i) &\leq & \exp\left(-\frac{1}{2}\sum_i y_i^2 + \frac{1}{3}\sum_i y_i^3 \right) \\
	&\leq & \exp\left( -\frac{1}{2}\sum_i y_i^2 + \frac{1}{3}\left(\sum_i y_i^2\right)^{3/2} \right) \\
	&\leq & \exp( \eps/6 ),
\end{eqnarray*}
where the last inequality used the fact that $\eps\leq 1$.  To argue about values of $\eps$ larger than $1$ we observe that the function $f(z) = \prod_i (1 + zy_i)$ is decreasing in the range $0 \leq z \leq 1$. To see this, notice that the derivative of $\ln f(z)$ is precisely $\sum_i \frac{y_i}{1+zy_i} \leq \sum_i y_i = 0$. Since $\ln f(z)$ is decreasing, $f(z)$ is also decreasing. Hence, we can apply the bound for small $\eps$ to get that $f(1) \leq f(z^*)\leq \exp(-1/6)$ for $z^* = \eps^{-1/2}\leq 1$.
\end{proof}

\begin{claim}[Capacity and row/column normalization]\label{cla-capacity-progress}
	Let $A \in \M_{m,n}(r,c)$ be a matrix such that $\ds(A) = \eps$. Then,
	\begin{enumerate}
		\item If $A$ is column-normalized, then $\capac(\row(A)) \geq \capac(A)$ (assuming $\row(A)$ is  defined).
		\item If $A$ is row-normalized then $\capac(\col(A)) \geq \min\left\{ e^{1/6},e^{\eps/6} \right\} \cdot \capac(A)$ (assuming $\col(A)$ is  defined).
	\end{enumerate}
		
\end{claim}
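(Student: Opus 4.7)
The plan is to apply Claim~\ref{cla-capofscaling} (capacity of a scaling) to reduce each part to bounding a certain product of determinants, and then use AM-GM style inequalities to control those products. The two parts are genuinely parallel, with part~(2) requiring the sharper \emph{quantitative} AM-GM from Claim~\ref{cla-quant-AMGM} to capture the $\eps/6$ improvement.

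First I would handle part~(1). By definition, $\row(A)$ is the scaling of $A$ with row coefficients $R_i = R_i(A)^{-1/2}$ and trivial column coefficients $C_j = I_c$. Claim~\ref{cla-capofscaling} then gives
\[
  \capac(\row(A)) \;=\; \left(\prod_{i=1}^m \det(R_i(A))\right)^{-nc/mr}\capac(A),
\]
so it suffices to show $\prod_i \det(R_i(A)) \leq 1$. Since $A$ is column-normalized, each $C_j(A) = I_c$, i.e.\ $\sum_i A_{ij}^* A_{ij} = (mr/nc) I_c$. Taking traces and summing over $j$ gives $\sum_{i,j}\|A_{ij}\|_2^2 = mr$, which can also be written as $\sum_i \tr(R_i(A)) = mr$. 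Now applying the classical AM-GM inequality twice (once on the eigenvalues of each $R_i(A)$, once on the values $\tr(R_i(A))/r$) yields
\[
  \prod_{i=1}^m \det(R_i(A)) \;\leq\; \prod_{i=1}^m \left(\tfrac{\tr(R_i(A))}{r}\right)^r \;\leq\; \left(\tfrac{1}{m}\sum_{i=1}^m \tfrac{\tr(R_i(A))}{r}\right)^{mr} = 1,
\]
finishing part~(1).

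For part~(2) the setup is dual: $\col(A)$ is the scaling of $A$ with row coefficients $I_r$ and column coefficients $C_j(A)^{-1/2}$, so Claim~\ref{cla-capofscaling} gives
\[
  \capac(\col(A)) \;=\; \left(\prod_{j=1}^n \det(C_j(A))\right)^{-1}\capac(A).
\]
Let $\lambda_{j,1},\ldots,\lambda_{j,c}$ denote the (positive) eigenvalues of $C_j(A)$; these are real and positive because $C_j(A)$ is positive definite (as we assume $\col(A)$ is defined). I would then verify the two hypotheses of Claim~\ref{cla-quant-AMGM} applied to the $nc$ numbers $\{\lambda_{j,k}\}$. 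The sum condition comes from $A$ being row-normalized: $\sum_j \tr(C_j(A)) = (nc/mr)\sum_{i,j}\|A_{ij}\|_2^2 = (nc/mr)\sum_i \tr(R_i(A)) = nc$, so $\sum_{j,k}\lambda_{j,k} = nc$. The variance condition is $\sum_j \|C_j(A)-I_c\|_2^2 = \sum_{j,k}(\lambda_{j,k}-1)^2 = \ds(A) = \eps$, where I used that $R_i(A)=I_r$ makes the row part of $\ds(A)$ vanish. Plugging into Claim~\ref{cla-quant-AMGM} gives $\prod_{j,k}\lambda_{j,k} \leq \max\{e^{-\eps/6},e^{-1/6}\}$, which rearranges to the claimed bound.

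The only mild subtlety is bookkeeping: making sure the normalizing factors $(nc)/(mr)$ in the definition of $C_j(A)$ are handled correctly so that the sum of all eigenvalues comes out to exactly $nc$ (the count of the $\lambda_{j,k}$), which is what the quantitative AM-GM demands. No deeper obstacle arises; both parts are essentially algebraic manipulations once the right scalings and AM-GM inputs are identified.
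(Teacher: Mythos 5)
Your proposal is correct and follows essentially the same route as the paper: reduce via Claim~\ref{cla-capofscaling} to bounding $\prod_i \det(R_i(A))$ resp.\ $\prod_j \det(C_j(A))$, use the trace identities from column/row normalization, and apply plain AM-GM for part~(1) and the quantitative AM-GM (Claim~\ref{cla-quant-AMGM}) with the $\ds(A)=\eps$ condition for part~(2). The only cosmetic difference is that you apply AM-GM in two grouped steps in part~(1) where the paper applies it once to all $mr$ eigenvalues.
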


\begin{comm}
One can prove a similar quantitative bound in terms if $\eps$ also in item (1) but we will not need it.
\end{comm}

\begin{proof}
We start by proving the first item. Let $R_i(A) = \sum_{j=1}^n A_{ij}A_{ij}^*$. Since the scaling coefficients used to get $\row(A)$ from $A$ are $R_i(A)^{-1/2}$, we get that, by Claim~\ref{cla-capofscaling}, 
\begin{equation}\label{eq-caprowA}
	\capac(\row(A)) = \left( \prod_{i=1}^m \det(R_i(A))^{-1/2} \right)^{2nc/mr}\cdot \capac(A).
\end{equation}
Let $\lambda_{i1},\ldots,\lambda_{ir}>0$ denote the  eigenvalues of the (positive definite) matrix $R_i(A)$. Since $A$ is column normalized we have that
\begin{eqnarray*}
	\sum_{i=1}^m \sum_{k=1}^r \lambda_{ik} &=& \sum_{i=1}^m \tr\left( \sum_{j=1}^n A_{ij}A_{ij}^*\right) \\
	&=& \sum_{j=1}^n \tr \left( \sum_{i=1}^m A_{ij}^*A_{ij} \right) \\
	&=& \sum_{j=1}^n \tr\left( \frac{rm}{nc} \cdot I_c \right) = rm.
\end{eqnarray*}
Hence, by the AM-GM inequality we get that
$$ \prod_{i=1}^m \det(R_i(A)) = \prod_{i=1}^m \prod_{k=1}^r \lambda_{ik} \leq 1. $$
Plugging this into Eq.~\ref{eq-caprowA} proves the first part of the claim. 

To prove the second part, let $C_j(A) = \frac{nc}{mr}\sum_{i=1}^m A_{ij}^* A_{ij}$ and recall  that $\col(A)$ is obtained from $A$ by scaling the columns with coefficients $C_j(A)^{-1/2}$. Hence, by Claim~\ref{cla-capofscaling}, we have
\begin{equation}\label{eq-capcolA}
 \capac(\col(A)) = \left( \prod_{j=1}^n \det(C_j(A))^{-1/2} \right)^2 \cdot \capac(A).
\end{equation}
Let $\mu_{j1},\ldots,\mu_{jc}$ be the  eigenvalues of $C_j(A)$. As before, we have that
$$ \sum_{j=1}^n\sum_{k=1}^c \mu_{jk} = \sum_{j=1}^n \tr\left(\frac{nc}{mr}\sum_{i=1}^m A_{ij}^*A_{ij} \right) = \frac{nc}{mr} \sum_{i=1}^m \tr(I_r) = nc.$$
Using the assumption $\ds(A)=\eps$ and the fact that $A$ is row normalized we can also deduce that
\begin{equation*}
	\sum_{j=1}^n \sum_{k=1}^c (\mu_{jk}-1)^2 = \sum_{j=1}^n \tr\left( (C_j(A) - I_c)^2 \right) = \eps.
\end{equation*}
Hence, we can use Claim~\ref{cla-quant-AMGM} to obtain the bound
$$ \prod_{j=1}^n \det(C_j(A)) = \prod_{j=1}^n\prod_{k=1}^c \mu_{jk} \leq \max\left\{ e^{-\eps/6}, e^{-1/6} \right\}. $$
Plugging this into Eq.~\ref{eq-capcolA} proves the second part of the claim.
\end{proof}

Another useful claim:

\begin{claim}\label{cla-rowcoldefined}
Let $A \in \M_{m,n}(r,c)$ be such that $\row(A)$ and $\col(A)$ are well defined. Then,  $\row(B)$ and $\col(B)$ are well defined for every scaling $B$ of $A$.
\end{claim}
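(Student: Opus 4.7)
The plan is to unwind the definitions of $\row$ and $\col$ into null-space conditions, and then check that these conditions are preserved when one conjugates by the (invertible) scaling coefficients. Concretely, $\row(A)$ is well defined iff every $R_i(A) = \sum_{j=1}^n A_{ij}A_{ij}^*$ is positive definite, and since
$$v^* R_i(A) v = \sum_{j=1}^n \|A_{ij}^* v\|^2,$$
this positive-definiteness is equivalent to the statement that, for each $i$, the only $v \in \C^r$ with $A_{ij}^* v = 0$ for every $j \in [n]$ is $v = 0$. Symmetrically, $\col(A)$ is well defined iff for each $j$, the only $w \in \C^c$ with $A_{ij} w = 0$ for every $i \in [m]$ is $w = 0$.

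Next, writing $B_{ij} = R_i A_{ij} C_j$ with invertible $R_i \in \M_{r,r}(1,1)$ and $C_j \in \M_{c,c}(1,1)$, I would compute
$$R_i(B) \,=\, \sum_{j=1}^n R_i A_{ij} C_j C_j^* A_{ij}^* R_i^* \,=\, R_i \Bigl(\sum_{j=1}^n A_{ij}\, C_j C_j^*\, A_{ij}^*\Bigr) R_i^*.$$
Since $R_i$ is invertible, $R_i(B) \succ 0$ if and only if the bracketed PSD matrix is nonsingular. For any $v \in \C^r$, its quadratic form at $v$ equals $\sum_j \|C_j^* A_{ij}^* v\|^2$, and since each $C_j^*$ is invertible, this vanishes iff $A_{ij}^* v = 0$ for every $j$. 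The hypothesis that $\row(A)$ is defined then forces $v = 0$, so $R_i(B)$ is positive definite for every $i$, meaning $\row(B)$ is well defined.

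A fully symmetric calculation handles $\col(B)$: one expands
$$C_j(B) \,=\, \frac{nc}{mr}\, C_j^*\Bigl(\sum_{i=1}^m A_{ij}^*\, R_i^* R_i\, A_{ij}\Bigr) C_j,$$
observes that the bracketed quadratic form at $w \in \C^c$ is $\sum_i \|R_i A_{ij} w\|^2$, and uses invertibility of $R_i$ together with the column hypothesis on $A$ to conclude $w = 0$. There is no real obstacle in this argument; the only point requiring care is applying invertibility of the scaling coefficients in exactly the right place — namely, to pass from $C_j^* A_{ij}^* v = 0$ to $A_{ij}^* v = 0$ (and dually for columns) — so that the null-space condition inherited from $A$ survives conjugation by the $R_i$ and $C_j$.
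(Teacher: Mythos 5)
Your proof is correct and follows essentially the same route as the paper: factor the invertible scaling coefficients out of $R_i(B)$ (resp.\ $C_j(B)$), and reduce nonsingularity of the remaining PSD matrix to the kernel condition for $R_i(A)$ (resp.\ $C_j(A)$) using invertibility of the $C_j$ (resp.\ $R_i$). The only difference is cosmetic: the paper argues the row case and declares the column case identical, whereas you write out both.
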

\begin{proof}
	Suppose $B_{ij} = R_iA_{ij}C_j$ for non-singular scaling coefficients $R_1,\ldots,R_m \in \M_{r,r}(1,1)$ and $C_1,\ldots,C_n \in \M_{c,c}(1,1)$. To  show that $\row(B)$ is well defined we need to argue that, for each $i \in [m]$, the PSD matrix 
	$$R_i(B) = \sum_{j=1}^n B_{ij}B_{ij}^* = \sum_{j=1}^n R_iA_{ij}C_jC_j^* A_{ij}^*R_i^*$$
	is non singular. We can take out the non singular $R_i$ and $R_i^*$ factors and so we need to show that 
	$$ \sum_{j=1}^n A_{ij}C_jC_j^* A_{ij}^*$$ 
	is non singular. This $r\times r$ PSD matrix is singular iff there exists a vector $v \in \C^r$ so that $C_j^*A_{ij}^* v = 0$ for all $j\in [n]$. Since the $C_j$'s are non singular, such a $v$  would also be in the kernel of $R_i(A) = \sum_{j=1}^n A_{ij}A_{ij}^*$ in contradiction to our assumption that $\row(A)$ is well defined. The proof for $\col(A)$ is identical.
\end{proof}

\begin{proof}[Proof of Lemma~\ref{lem-capacity-scalable}]
Let $A_0=\col(A)$ and define recursively $$ A_{k+1} = \col(\row(A_k)).$$ 
Notice that $\col(A)$ is well defined since $\capac(A)>0$ and that $\row(A)$ is well defined since $\capac(A^*) > 0$ (using Claim~\ref{cla-transpose}). Hence, by Claim~\ref{cla-rowcoldefined},  this property will remain true for all matrices $A_k$ in the sequence (since they are all scalings of $A$). We wish to show that $\ds(A_k)$ approaches zero when $k$ goes to infinity. Assume in contradiction that $\ds(A_k) \geq \eps$ for some  $0<\eps<1$ and all $k \geq 0$. Applying Claim~\ref{cla-capacity-progress}, we get that $$ \capac(A_{k+1}) \geq \exp(\eps/6)\cdot \capac(A_{k}).$$ The matrices $A_k$ are all column-normalized and so, by Claim~\ref{cla-capac-normalized}, $\capac(A_k) \leq 1$ for all $k \geq 0$. This gives a contradiction  to the claimed growth of $\capac(A_k)$. 
\end{proof}

\subsection{Bounding the capacity of a matrix}

In this section we will develop  machinery useful for  proving that the capacity of certain matrices is positive.

\begin{claim}\label{cla-capacity-monotone}
Let $A,B \in \M_{m,n}(r,c)$ be two block matrices such that, for every $i\in [m],j\in [n]$, $B_{ij}$ is either equal to $A_{ij}$ or equal to a zero $r \times c$ block. Then, $\capac(A) \geq \capac(B)$. In particular, if $\capac(B)>0$ then $\capac(A)>0$.	
\end{claim}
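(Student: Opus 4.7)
The plan is to show the inequality $\capac(A) \geq \capac(B)$ by a pointwise comparison inside the infimum defining capacity, exploiting the fact that adding a PSD matrix to a PSD matrix can only increase the determinant.

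First I would fix an arbitrary choice of positive definite matrices $X_1,\ldots,X_m \in \M_{r,r}(1,1)$ with $\prod_i \det(X_i) = 1$. For each column index $j \in [n]$, consider the two Hermitian matrices
\[
M_j(A) = \frac{nc}{mr}\sum_{i=1}^m A_{ij}^* X_i A_{ij}, \qquad M_j(B) = \frac{nc}{mr}\sum_{i=1}^m B_{ij}^* X_i B_{ij}.
\]
Since each block $B_{ij}$ equals either $A_{ij}$ or $0$, every summand $B_{ij}^* X_i B_{ij}$ equals either $A_{ij}^* X_i A_{ij}$ or $0$, and in particular
\[
M_j(A) - M_j(B) = \frac{nc}{mr}\sum_{i \,:\, B_{ij} = 0} A_{ij}^* X_i A_{ij} \succeq 0,
\]
because each $A_{ij}^* X_i A_{ij}$ is PSD (as $X_i \succ 0$). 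Hence $M_j(A) \succeq M_j(B) \succeq 0$ for every $j$.

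Next I would invoke the standard monotonicity of the determinant on PSD matrices: if $M \succeq N \succeq 0$ then $\det(M) \geq \det(N) \geq 0$ (this is immediate when $N$ is singular, and when $N$ is invertible it follows from $N^{-1/2} M N^{-1/2} \succeq I$, whose eigenvalues are all at least $1$). Applying this coordinate-wise gives $\det(M_j(A)) \geq \det(M_j(B))$ for every $j$, and taking the product over $j \in [n]$ yields
\[
\prod_{j=1}^n \det(M_j(A)) \;\geq\; \prod_{j=1}^n \det(M_j(B)).
\]

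Finally I would take the infimum over all admissible tuples $(X_1,\ldots,X_m)$ on both sides. Since the inequality holds for every such tuple, we obtain $\capac(A) \geq \capac(B)$, which proves the claim. The ``in particular'' statement is then immediate: if $\capac(B) > 0$ then $\capac(A) \geq \capac(B) > 0$. There is no real obstacle here — the only point to be careful about is justifying the determinantal monotonicity for PSD (rather than strictly positive definite) matrices, which is a one-line reduction.
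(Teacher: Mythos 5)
Your proof is correct and follows essentially the same route as the paper, which argues directly from the fact that $\det(X+Y) \geq \det(X)$ for PSD matrices $X,Y$ that zeroing out blocks can only decrease each determinant inside the infimum. Your write-up simply spells out the pointwise comparison and the reduction justifying determinant monotonicity in more detail.
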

\begin{proof}
	The claim following from the simple fact that, for two PSD matrices $X,Y$, we have $\det(X+Y) \geq \det(X)$. Using this in the definition of capacity, we see that, replacing some blocks in $A$ with zeros can only decrease the product of determinants being minimized.
\end{proof}

\begin{claim}[Block diagonal matrices]\label{cla-block-diagonal}
Suppose $M$ is an $s \times s$ block diagonal matrix with entries $M_{ij} \in \M_{m,n}(r,c)$. Then, viewing $M$ as an element of $\M_{sm,sn}(r,c)$ we have $\capac(M) = \prod_{i=1}^s\capac(M_{ii})$. In particular, if all the $M_{ii}$'s have positive capacity, then so does $M$.
\end{claim}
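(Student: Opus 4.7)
The plan is to show that the optimization defining $\capac(M)$ decouples into $s$ independent optimizations, one for each diagonal block $M_{kk}$, and that the joint constraint $\prod \det(X_i) = 1$ can be absorbed using the homogeneity of the capacity functional.

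First I would index the rows of $M \in \M_{sm, sn}(r,c)$ by pairs $(k,i) \in [s] \times [m]$ and the columns by pairs $(l,j) \in [s] \times [n]$, so that $M_{(k,i),(l,j)} = (M_{kk})_{ij}$ when $k = l$ and equals the zero $r \times c$ block otherwise. Since the outer dimensions satisfy $(sn)c / (sm)r = nc/mr$, substituting into Definition~\ref{def:capacity} and using the block-diagonal structure to kill all cross terms yields
\begin{equation*}
\sum_{(k,i)} M_{(k,i),(l,j)}^{*} X_{(k,i)} M_{(k,i),(l,j)} \;=\; \sum_{i=1}^{m} (M_{ll})_{ij}^{*} X_{(l,i)} (M_{ll})_{ij},
\end{equation*}
so the objective factors as
\begin{equation*}
\prod_{l=1}^{s} \prod_{j=1}^{n} \det\!\left(\frac{nc}{mr} \sum_{i=1}^{m} (M_{ll})_{ij}^{*} X_{(l,i)} (M_{ll})_{ij}\right).
\end{equation*}

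Next I would decouple the constraint. For each $k \in [s]$ introduce $\alpha_k = \prod_{i=1}^m \det(X_{(k,i)}) > 0$; the single constraint $\prod_{k,i} \det(X_{(k,i)}) = 1$ becomes $\prod_k \alpha_k = 1$, with the inner variables in block $k$ otherwise unconstrained except for the product-of-determinants equaling $\alpha_k$. The infimum for block $k$ at level $\alpha_k$ is, by the scaling $X_{(k,i)} = \alpha_k^{1/(mr)} Z_{(k,i)}$ (which sends the constraint to $\prod_i \det(Z_{(k,i)}) = 1$ and scales the objective by $\alpha_k^{nc/(mr)}$ via the $t^r$-homogeneity of $\det$ on $r \times r$ matrices and $t^c$-homogeneity inside each column determinant), exactly $\alpha_k^{nc/(mr)} \cdot \capac(M_{kk})$.

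Therefore
\begin{equation*}
\capac(M) \;=\; \inf_{\substack{\alpha_k > 0 \\ \prod_k \alpha_k = 1}} \;\prod_{k=1}^{s} \alpha_k^{nc/(mr)} \,\capac(M_{kk}) \;=\; \left(\prod_{k=1}^{s} \alpha_k\right)^{\!nc/(mr)} \prod_{k=1}^{s} \capac(M_{kk}) \;=\; \prod_{k=1}^{s} \capac(M_{kk}),
\end{equation*}
since the constraint forces the first factor to equal $1$. The ``in particular'' clause is then immediate. The only mildly subtle point is verifying the homogeneity bookkeeping: one must check that scaling each of the $m$ positive-definite $r \times r$ matrices $X_{(k,i)}$ by a common factor $\alpha_k^{1/(mr)}$ rescales the product-of-column-determinants by exactly $\alpha_k^{nc/(mr)}$, which is where the specific normalization $nc/(mr)$ in Definition~\ref{def:capacity} plays its role. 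I do not anticipate any deeper obstacle beyond tracking these indices carefully.
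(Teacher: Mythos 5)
Your proof is correct and is essentially the paper's own argument: the paper removes the determinant constraint by rewriting the capacity as the unconstrained infimum of the scale-invariant ratio $\prod_{j}\det\bigl(\tfrac{nc}{mr}\sum_i M_{ij}^*X_iM_{ij}\bigr)/\bigl(\prod_i\det X_i\bigr)^{nc/mr}$ and then factoring that ratio across the diagonal blocks, which is exactly the homogeneity bookkeeping you perform with the levels $\alpha_k$. Both versions rest on the same two facts---block-diagonality makes the objective factor over the blocks, and the normalization $nc/(mr)$ lets the single constraint $\prod\det(X_{(k,i)})=1$ decouple---so your write-up needs no repair.
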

\begin{proof}
	To save on notations, we will only prove the claim for $s=2$ (the general case is proved along the same lines). Suppose therefore that $M$ has diagonal blocks $A,B \in \M_{m,n}(r,c)$ and zero blocks in the two off diagonal positions. More precisely, viewing $M$ as an element of $\M_{2m,2n}(r,c)$ (and treating $M_{ij}$ as the actual $r \times c$ blocks of $M$),  we have $M_{ij} = A_{ij}$ for $1 \leq i \leq m$ and $1 \leq j \leq n$, $M_{ij} = B_{(i-m)(j-n)}$ for $m+1 \leq i \leq 2m, n+1 \leq j \leq 2n$ and $M_{ij}=0$ for all other pairs $i,j$. 
	
	To see that the capacity splits into the product of capacities, it is enough to rewrite the capacity in a scale invariant form:
	\begin{eqnarray*}
		\capac(M) &=&  \inf \left\{ \prod_{j=1}^{2n} \det\left(  \frac{2nc}{2mr}\sum_{i=1}^{2m} M_{ij}^* X_i M_{ij} \right) \ : \
	X_i \succ 0  \text{ and } \prod_{i=1}^{2m} \det(X_i) = 1 \right\} \\
	&=& \inf \left\{ \frac{\prod_{j=1}^{2n} \det\left(  \frac{2nc}{2mr}\sum_{i=1}^{2m} M_{ij}^* X_i M_{ij} \right)}{\left( \prod_{i=1}^{2m} \det(X_i) \right)^{2nc/2rm}} \ : \
	X_i \succ 0   \right\} \\
	&=& \inf \left\{ \frac{\prod_{j=1}^{n} \det\left(  \frac{nc}{mr}\sum_{i=1}^{m} A_{ij}^* X_i A_{ij} \right)}{\left( \prod_{i=1}^{m} \det(X_i) \right)^{nc/rm}}\cdot \frac{\prod_{j=1}^{n} \det\left(  \frac{nc}{mr}\sum_{i=1}^{m} B_{ij}^* Y_i B_{ij} \right)}{\left( \prod_{i=1}^{m} \det(Y_i) \right)^{nc/rm}} \ : \
	X_i,Y_i \succ 0   \right\} \\
	&=& \capac(A)\cdot \capac(B).
	\end{eqnarray*}
\end{proof}

\subsubsection{A result from Brascamp-Lieb theory}

We will rely on a technical result from \cite{BCCT} (Proposition 5.2 in that paper)  that allows to bound the capacity of a block matrix with only one column (i.e., a set of matrices). The results of \cite{BCCT} are stated for real PSD matrices but the proofs carry over easily to the complex Hermitian case. To make the connection to \cite{BCCT} easier to see we first give some definitions from \cite{BCCT}. The first notion is that of a {\em Brascamp-Lieb datum} $\bf (B,p)$ with ${\bf B} = (B_1,\ldots,B_k)$ a set of linear transformations $B_j : H \mapsto H_j$ between Hilbert spaces  and ${\bf p} = (p_1,\ldots,p_k)$ a sequence of positive real numbers. For our purposes it is enough to treat the case when $H= \C^c$ and, for all $1 \leq j \leq k$ we have  $H_j = \C^r$ and $p_j = c/kr$. In the notations of \cite{BCCT} this datum satisfies condition `(7)' which requires that $\dim(H) = \sum_j p_j \dim(H_j)$. Given such a datum (we ignore the vector $\bf p$ since it is fixed), a quantity called ${\rm BL_g}({\bf B})$ is defined (the subscript $\rm g$ stands for `Gaussian').
\begin{equation*}
	{BL_g}({\bf B}) := \sup\left\{ \left(\frac{\prod_j \det(X_j)}{(c/kr)\det(\sum_j B_j^* X_j B_j)}\right)^{1/2} \, : \, X_j \succ 0 \right\}.
\end{equation*}
Using scale invariance (scaling each $X_j$ by the same constant does not change the ratio) this is the same as
\begin{equation*}
	{BL_g}({\bf B}) := \sup\left\{ \left( (c/kr)\det(\sum_j B_j^* X_j B_j)\right)^{-1/2} \, : \, X_j \succ 0 , \prod_j\det(X_j) = 1 \right\}
\end{equation*}
Going back to our notations, if $A \in \M_{k,1}(r,c)$ is a block matrix with one column comprised of blocks $A_{11},\ldots,A_{k1} \in \M_{r,c}(1,1)$ then, the capacity of $A$ is positive iff the quantity ${\rm BL_g}$ is bounded for the datum composed of the blocks of $A$ (treated as maps from $\C^c$ to $\C^r$). The following is a restatement of Proposition 5.2 from \cite{BCCT} (we do not require the `furthermore' part of the theorem).

\begin{thm}[\cite{BCCT}]\label{prop-BCCT}
Let ${\bf B} = (B_1,\ldots,B_k)$ be a Brascap-Lieb datum as above, which satisfies:
\begin{enumerate}
	\item Each $B_j$ is surjective and the common kernel of all $B_j$'s is trivial (`non degenerate datum' in the language of \cite{BCCT}).
	\item For each subspace $V$ of $\C^c$ we have $\dim(V) \leq (c/kr)\sum_j \dim(B_j(V))$ (condition `(8)' in \cite{BCCT}).
\end{enumerate}
Then, the quantity ${\rm BL_g({\bf B})}$ is bounded from above.
\end{thm}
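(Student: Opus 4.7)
\medskip

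\noindent\textbf{Proof proposal.} Rewriting $\mathrm{BL}_g(\mathbf B)$ using the scale-invariant form already displayed above, the claim is equivalent to showing that
\[
\mathcal I(\mathbf B) := \inf\left\{\det\!\left(\sum_{j=1}^k B_j^* X_j B_j\right) \;:\; X_j \succ 0,\; \prod_{j=1}^k \det(X_j)=1\right\} > 0.
\]
The plan is to induct on the ``total dimension'' $c$ (with $r$, $k$ regarded as auxiliary parameters), with a dichotomy on whether the subspace condition (2) holds strictly for all proper nonzero $V\subset \C^c$, or whether it is achieved with equality on some such $V$. In both cases we use that non-degeneracy (condition (1)) together with the normalization $p_j = c/kr$ makes condition (2) applied to $V=\C^c$ an equality (since each $B_j$ is surjective).

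\emph{Factorization case.} Suppose there is a proper nonzero subspace $V\subset \C^c$ with $\dim(V) = (c/kr)\sum_j \dim(B_j V)$. Set $W_j := B_j(V) \subset \C^r$ and consider the two Brascamp--Lieb data obtained by restriction and quotient:
\[
\mathbf B^{V} = (B_j|_V : V\to W_j), \qquad \mathbf B^{\C^c/V} = (\bar B_j : \C^c/V \to \C^r/W_j).
\]
One checks directly that both data satisfy the assumptions of the theorem (non-degeneracy and condition~(2)) with appropriately induced parameters, and that by choosing $X_j$ block-diagonal in a basis adapted to the decomposition $\C^r = W_j \oplus W_j^\perp$, the determinant $\det(\sum_j B_j^* X_j B_j)$ factors (up to a constant) as the product of the two analogous determinants for $\mathbf B^V$ and $\mathbf B^{\C^c/V}$. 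Optimizing separately over the two blocks then lets us conclude $\mathcal I(\mathbf B) > 0$ by two applications of the inductive hypothesis, in strictly smaller dimension.

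\emph{Strict case (the hard part).} Suppose every proper nonzero $V\subset \C^c$ satisfies $\dim V < (c/kr)\sum_j \dim(B_j V)$. Take a minimizing sequence $(X_j^{(n)})$, pass to a subsequence, and analyze the joint eigenvalue/eigenvector asymptotics of the $X_j^{(n)}$: extract a common exponential rate $s_n\to\infty$ so that the ordered eigenvalues of $X_j^{(n)}$ are $\exp(s_n\alpha_{j,i} + o(s_n))$, and so that the eigenvectors converge to orthonormal bases determining a flag $\mathcal F_j$ in $\C^r$. Pulling these flags back through the $B_j$ induces a filtration of $\C^c$; evaluating $\det(\sum_j B_j^* X_j^{(n)} B_j)$ asymptotically reduces (via a Laplace-type estimate on the exponential scales) to a linear combination of dimensions of the form $\sum_j \dim(B_j V)$ against partition data on the rates $\alpha_{j,i}$, minus $\dim V$ terms coming from the constraint $\prod\det(X_j)=1$. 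The strict inequality hypothesis is precisely what prevents the leading exponential from being non-positive for any nontrivial choice of rates, forcing the minimizing sequence to lie in a compact region where the continuous positive function $\det(\sum_j B_j^* X_j B_j)$ attains a positive minimum.

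The main obstacle is this last compactness step: controlling all possible ways the $X_j^{(n)}$ can escape to infinity and translating each escape direction into a subspace of $\C^c$ witnessing a violation of the strict inequality. This multi-scale eigenvalue bookkeeping is the technical heart of the argument (and is what~\cite{BCCT} carry out in detail in their Proposition~5.2); the induction on dimension is used precisely to avoid having to perform this analysis in the ``degenerate equality'' case.
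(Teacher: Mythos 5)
The paper itself offers no proof of this statement: Theorem~\ref{prop-BCCT} is imported verbatim from \cite{BCCT} (their Proposition 5.2), with only the remark that the real arguments carry over to the complex Hermitian case, and it is then immediately translated into Theorem~\ref{thm-BCCT}. So your proposal has to be judged against the original argument of \cite{BCCT}, whose architecture you do reproduce at a high level: induction on dimension with a dichotomy between the existence of a critical subspace (equality in condition (2)) and the ``simple'' case where the inequality is strict, the latter handled by a compactness analysis of sequences escaping to infinity.

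There is, however, a genuine directional error in your factorization case. You need a \emph{lower} bound on the infimum $\mathcal{I}(\mathbf{B})$ over \emph{all} tuples $X_j \succ 0$ with $\prod_j \det X_j = 1$; restricting to $X_j$ that are block-diagonal with respect to $\C^r = W_j \oplus W_j^{\perp}$ only controls the infimum over that subclass, which is at least the true infimum, so ``optimizing separately over the two blocks'' proves nothing about general inputs. The correct step (and what \cite{BCCT} actually do) works with arbitrary $X_j$: set $M = \sum_j B_j^* X_j B_j$, split $\det M = \det(M_{VV})\cdot\det(M/M_{VV})$ with respect to $\C^c = V \oplus V^{\perp}$, note that $M_{VV}$ depends only on the compressions $X_j^{W_j}$ of $X_j$ to $W_j = B_j(V)$ (so $\det(M_{VV})$ is exactly the restricted-datum quantity), and use the variational characterization of Schur complements to get $M/M_{VV} \succeq \sum_j \bar B_j^{*}\bigl(X_j/X_j^{W_j}\bigr)\bar B_j$, so the second factor dominates the quotient-datum quantity; finally $\det X_j = \det\bigl(X_j^{W_j}\bigr)\det\bigl(X_j/X_j^{W_j}\bigr)$, and the criticality of $V$ is precisely what makes the homogeneity degrees of the two sub-problems match so that the single product constraint suffices. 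Two further points: the induction cannot be run inside the statement as specialized here (all $H_j = \C^r$, all $p_j = c/kr$), since the restricted and quotient data have targets of varying dimension, so you must formulate and induct on the general Brascamp--Lieb statement; and your ``strict case'' is a description of the compactness argument rather than an argument, explicitly deferred to \cite{BCCT}, so as a self-contained proof the proposal is incomplete --- though deferring there is no worse than what the paper itself does by citing the result.
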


Restated in our language this becomes:
\begin{thm}[\cite{BCCT}]\label{thm-BCCT}
	Let $A \in \M_{k,1}(r,c)$ be such that the blocks $A_{11},\ldots,A_{k1} \in \M_{r,c}(1,1)$ form a well-spread set (see Definition~\ref{def-wellspread}). Then $\capac(A) >0$. 
\end{thm}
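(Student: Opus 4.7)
The plan is to verify that the well-spread hypothesis on the blocks $A_{11},\ldots,A_{k1}$ (viewed as maps $\C^c \to \C^r$) precisely encodes the two assumptions of Theorem~\ref{prop-BCCT}, and then translate the resulting boundedness of $\mathrm{BL_g}$ back into strict positivity of $\capac(A)$. Since the theorem is essentially a restatement in our language of Proposition 5.2 of \cite{BCCT}, the work is largely bookkeeping.

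First, I would check the non-degeneracy assumption in Theorem~\ref{prop-BCCT}. Taking $V = \C^c$ in the well-spread inequality gives $\sum_{i=1}^k \dim(A_{i1}(\C^c)) \geq (rk/c)\cdot c = rk$; since each term is at most $r$, equality must hold for each $i$, so each $A_{i1}$ is surjective onto $\C^r$. For the common-kernel condition, if some nonzero $v$ lay in $\bigcap_i \ker(A_{i1})$, applying the well-spread inequality to $V=\mathrm{span}(v)$ would give $0 \geq rk/c$, a contradiction. Hence the common kernel is trivial.

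Next, I would note that condition (8) of Theorem~\ref{prop-BCCT}, namely
\[
\dim(V) \,\leq\, \tfrac{c}{kr}\sum_{i=1}^k \dim(A_{i1}(V)) \qquad \text{for all } V \subseteq \C^c,
\]
is simply the well-spread inequality $\sum_i \dim(A_{i1}(V)) \geq (rk/c)\dim(V)$ rearranged. Thus both hypotheses of Theorem~\ref{prop-BCCT} are satisfied for the Brascamp--Lieb datum $\mathbf{B} = (A_{11},\ldots,A_{k1})$ (with uniform exponents $p_i = c/kr$), and the theorem yields $\mathrm{BL_g}(\mathbf{B}) < \infty$.

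Finally, I would compare the two functionals. For a one-column matrix ($n=1$), the definition of $\capac(A)$ reduces to the infimum of $\det\!\bigl(\tfrac{c}{kr}\sum_i A_{i1}^* X_i A_{i1}\bigr)$ over $X_i \succ 0$ with $\prod_i \det(X_i)=1$, which by pulling the scalar $c/kr$ out of a $c\times c$ determinant equals $(c/kr)^{c-1}\,\mathrm{BL_g}(\mathbf{B})^{-2}$. This is a positive multiple of a finite quantity, hence strictly positive. The only mild obstacle in this argument is tracking the scalar normalization factors that distinguish the $\capac$ convention from the $\mathrm{BL_g}$ convention, but since both functionals are scale invariant in the relevant sense, this is routine and does not affect the positivity conclusion.
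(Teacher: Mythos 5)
Your proposal is correct and follows essentially the same route as the paper: verify the non-degeneracy condition of Theorem~\ref{prop-BCCT} by applying the well-spread inequality to $V=\C^c$ and to (a line in) the common kernel, observe that condition (8) is just the well-spread inequality rearranged, and then convert the finiteness of $\mathrm{BL_g}$ into positivity of $\capac(A)$ via the scale-invariant reformulation. Your version is slightly more explicit about the constant relating $\capac(A)$ to $\mathrm{BL_g}(\mathbf{B})^{-2}$, which the paper leaves implicit, but this is the same argument.
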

\proof{
The first condition of Theorem~\ref{prop-BCCT} holds in our case using the fact that the blocks $A_{i1}$ are well spread. To see that each $A_{i1}$ is onto $\C^r$ apply the well-spread condition with $V = \C^c$. To see that their common kernel is trivial, apply the same bound with $V$ equal to their common kernel. The second condition in Theorem~\ref{prop-BCCT} is equivalent to our well-spread definition and so requires no proof. By the preceding discussion, the bound on $\rm BL_g$ implies that the capacity is positive.
}


\section{Rank of design matrices with block entries}\label{sec-rankdesign}
In this section we will prove Theorem~\ref{thm-rank-design}. First, we analyze a transformation taking any design matrix to another design matrix which is scalable.

\subsection{Regularization of a design matrix}

\begin{define}[Design matrix in regular form]
A $(q,k,t)$-design matrix $A \in \M_{m,n}(r,c)$ is in {\em regular form} if $m = nk$ and, in each column $i \in [n]$, the $k$ blocks $A_{(i-1)k+1,i}, \ldots, A_{(i-1)k+k,i}$ form a well-spread set. That is, the second item in the definition of a design matrix is satisfies by $k$-tuples of blocks that are row-disjoint in $A$.
\end{define}

\begin{claim}\label{cla-regularize}
Let $A \in \M_{m,n}(r,c)$ be a $(q,k,t)$-design matrix. Then, there exists a $(q,k,tq)$-design matrix $B \in \M_{nk,n}(r,c)$ in regular form such that  $\rank(B) \leq \rank(A)$.
\end{claim}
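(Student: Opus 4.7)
The strategy is to construct $B$ from $A$ by selecting, for each column, a witness set of $k$ rows whose blocks form a well-spread set, and copying exactly those rows of $A$ into $B$ (with multiplicity, if some row of $A$ is selected for more than one column). Place these $nk$ selected rows in $B$ so that the $k$ rows chosen for column $j$ occupy positions $(j-1)k+1,\dots,(j-1)k+k$; this immediately gives $B\in\M_{nk,n}(r,c)$ in regular form.

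More precisely, for each $j\in[n]$ use the design-matrix hypothesis to fix indices $i^{(j)}_1,\dots,i^{(j)}_k\in[m]$ (distinct rows within column $j$) such that $\{A_{i^{(j)}_1,j},\dots,A_{i^{(j)}_k,j}\}$ is well-spread, and define
\[ B_{(j-1)k+\ell,\,j'} \;=\; A_{i^{(j)}_\ell,\,j'} \qquad \text{for all } j'\in[n],\ \ell\in[k]. \]
Equivalently, $\tilde B = S\tilde A$ for a $0/1$ row-selection matrix $S$, from which $\rank(B)=\rank(\tilde B)\le \rank(\tilde A)=\rank(A)$ is immediate. The bound $q$ on the number of nonzero blocks per row is inherited verbatim, since every row of $B$ is a copy of some row of $A$. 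The well-spread condition in column $j$ of $B$ holds by construction on the block $k$-tuple indexed by $(j-1)k+1,\dots,(j-1)k+k$, so $B$ is in regular form.

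The only nontrivial item is the intersection bound. Fix columns $j\neq j'\in[n]$ and ask in how many rows of $B$ both entries are nonzero. Such a row has the form $(j'',\ell)$ with $A_{i^{(j'')}_\ell,j}$ and $A_{i^{(j'')}_\ell,j'}$ both nonzero. Group these rows by the underlying index $i^*:=i^{(j'')}_\ell\in[m]$. By the $t$-design property of $A$ there are at most $t$ possible values of $i^*$. For each such $i^*$, the number of pairs $(j'',\ell)$ with $i^{(j'')}_\ell=i^*$ is at most the number of columns $j''$ for which row $i^*$ was selected (indices within a single column are distinct); but $i^*$ can be selected for column $j''$ only if $A_{i^*,j''}$ is nonzero (the well-spread hypothesis, applied with $V=\C^c$, forces every member of a well-spread set to be surjective, hence nonzero when $r\ge 1$), and row $i^*$ has at most $q$ nonzero blocks total. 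Thus each $i^*$ contributes at most $q$ common rows to $B$, yielding the bound $tq$. No step is genuinely hard; the main point requiring care is the multiplicity bookkeeping in this last paragraph, which is where the factor $q$ in $tq$ comes from.
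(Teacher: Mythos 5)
Your proposal is correct and follows essentially the same route as the paper: copy, for each column, the $k$ witness rows of $A$ into consecutive rows of $B$, note that every row of $B$ is a row of $A$ (so the rank cannot increase and the per-row bound $q$ is inherited), and bound the column-support intersections by $tq$ because each row of $A$ can be selected for at most $q$ columns. Your explicit remark that a well-spread block must be nonzero (surjective, via $V=\C^c$) just spells out a step the paper uses implicitly, so there is nothing substantively different to compare.
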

\begin{proof}
We construct $B$ in $n$ steps. In the first step we add to $B$ $k$ rows of $A$ so that their first column entries are well-spread. In the next step we add $k$ more rows to $B$ using the $k$ rows in $A$ in which the second column entries form a well spread set. We continue in this manner until we end up with $B$ having $nk$ rows. Since each row of $A$ contains at most $q$ non zero blocks, we have that each row of $A$ is repeated at most $q$ times in $B$. Hence, the supports of two columns in $B$ can intersect in at most $tq$ positions. Since all rows of $B$ are from $A$ the rank of $B$ cannot increase (it might decrease if we do not use all rows of $A$).	
\end{proof}

\begin{claim}\label{cla-regular-scalable}
Suppose $B \in \M_{nk,n}(r,c)$ is a $(q,k,t)$-design matrix in regular form. Then $B$ is scalable.
\end{claim}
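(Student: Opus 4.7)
The plan is to invoke Lemma~\ref{lem-capacity-scalable}, so it suffices to establish that $\capac(B) > 0$. The regular-form hypothesis has been set up precisely so that the well-spread $k$-tuples witnessing the column condition live in row-disjoint blocks, and this block-disjointness is exactly what is needed to reduce the capacity computation to the one-column case handled by Theorem~\ref{thm-BCCT}.

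Concretely, I would first define an auxiliary matrix $B' \in \M_{nk,n}(r,c)$ obtained from $B$ by zeroing out every block except, for each column $j \in [n]$, the $k$ blocks $B_{(j-1)k+1, j}, \ldots, B_{jk, j}$ that form the well-spread set. Since the row-index sets $\{(j-1)k+1,\ldots,jk\}$ are disjoint as $j$ varies, the matrix $B'$ is block-diagonal in the sense of Claim~\ref{cla-block-diagonal}, with $n$ diagonal blocks, each lying in $\M_{k,1}(r,c)$ and consisting precisely of a well-spread $k$-tuple of $r \times c$ matrices.

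Next I would apply Theorem~\ref{thm-BCCT} to each diagonal block to conclude that its capacity is strictly positive, then use Claim~\ref{cla-block-diagonal} to multiply these together and obtain $\capac(B') > 0$. Finally, because $B'$ is obtained from $B$ by replacing some blocks with zero blocks, Claim~\ref{cla-capacity-monotone} yields $\capac(B) \geq \capac(B') > 0$, and Lemma~\ref{lem-capacity-scalable} then gives scalability.

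There is no real obstacle here: the three key ingredients (Theorem~\ref{thm-BCCT}, Claim~\ref{cla-block-diagonal}, Claim~\ref{cla-capacity-monotone}) combine cleanly once the regular form is exploited to make the row-support disjointness explicit. The only point requiring a small sanity check is that Claim~\ref{cla-block-diagonal} applies to $B'$ as written, since the regular-form indexing places the well-spread tuples on the diagonal without any row permutation being needed; otherwise one could simply observe that capacity is invariant under row and column permutations.
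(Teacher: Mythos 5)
Your proposal is correct and follows essentially the same route as the paper: zero out the non-special blocks to get a block-diagonal matrix $B'$, apply Theorem~\ref{thm-BCCT} to each $\M_{k,1}(r,c)$ diagonal block, combine via Claim~\ref{cla-block-diagonal}, and conclude with Claim~\ref{cla-capacity-monotone} and Lemma~\ref{lem-capacity-scalable}. No gaps.
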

\begin{proof}
%
We call the entries of $B$ in positions $((i-1)k+\ell,i)$ for $\ell \in [k]$ {\em special}. Let $B' \in \M_{nk,n}(r,c)$ be the matrix obtained from $B$ by replacing all the non special entries of $B$ by zero blocks. By Claim~\ref{cla-capacity-monotone} and Lemma~\ref{lem-capacity-scalable} it is enough to prove that $\capac(B')>0$. We can consider $B'$ as a diagonal $n \times n$ matrix with entries in $\M_{k,1}(r,c)$ and so, using Claim~\ref{cla-block-diagonal}, it is enough to show that the special entries in each column form a $\M_{k,1}(r,c)$ matrix with positive capacity. This follows from  Theorem~\ref{thm-BCCT} and using the assumption that the special entries in each column form a well spread set.
\end{proof}

\subsection{Proof of Theorem~\ref{thm-rank-design}}

We will use the following folklore lemma on diagonal dominant matrices.

\begin{lem}[Diagonal dominant matrices]\label{lem-diagonaldom}
	Let $H \in \M_{n,n}(1,1)$ be a square Hermitian complex matrix. Suppose $H_{i,i} \geq L > 0$ for all $i \in [n]$ and let $S = \sum_{i \neq j} |H_{i,j}|^2$. Then $$ \rank(H) \geq \frac{L^2n^2}{nL^2 + S}= n - \frac{nS}{nL^2 + S}.$$ We call a matrix $H$ satisfying these two conditions an $(L,S)$-{\em diagonal dominant} matrix.
\end{lem}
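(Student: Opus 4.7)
The plan is to bound the nullity $k = n - \rank(H)$ from above via a clean trace identity involving the projection onto $\ker(H)$. Let $P$ denote the orthogonal projection onto $\ker(H)$. Then $P$ is Hermitian with $P^2 = P$, $\tr(P) = k$, and $HP = 0$. The starting point is the identity $\tr(HP) = 0$, which I would split into diagonal and off-diagonal parts: $\sum_i H_{ii} P_{ii} + \sum_{i \neq j} H_{ij} P_{ji} = 0$.

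Next I would lower bound the diagonal contribution. The diagonal entries of a Hermitian projection satisfy $P_{ii} = \|P e_i\|^2 \geq 0$, so combined with the hypothesis $H_{ii} \geq L$ this gives $\sum_i H_{ii} P_{ii} \geq L \sum_i P_{ii} = Lk$.

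For the off-diagonal contribution I would apply Cauchy--Schwarz to obtain $\bigl|\sum_{i \neq j} H_{ij} P_{ji}\bigr|^2 \leq S \cdot \sum_{i \neq j} |P_{ij}|^2$. The Frobenius identity $\|P\|_F^2 = \tr(P^* P) = \tr(P^2) = \tr(P) = k$ rewrites the right factor as $k - \sum_i P_{ii}^2$, and then the power-mean inequality $\sum_i P_{ii}^2 \geq (\sum_i P_{ii})^2 / n = k^2 / n$ yields the key estimate $\sum_{i \neq j} |P_{ij}|^2 \leq k(n-k)/n$. Combining the two sides of $\tr(HP) = 0$ then gives $L^2 k^2 \leq S k(n-k)/n$, which rearranges to $k(nL^2 + S) \leq nS$, so $k \leq nS/(nL^2 + S)$ and therefore $\rank(H) = n - k \geq n^2 L^2 / (nL^2 + S)$, as claimed.

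The only real subtlety is recovering the factor $(n-k)/n$ in the Frobenius bound on the off-diagonal of $P$; without the power-mean step one gets only the weaker conclusion $k \leq S/L^2$. Everything else is straightforward rearrangement and standard facts about Hermitian projections.
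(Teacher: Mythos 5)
Your proof is correct, but it follows a genuinely different route from the paper's. The paper first normalizes: scaling the $i$'th row and column of $H$ by $\sqrt{L/H_{ii}}\leq 1$ keeps $H$ Hermitian, puts exactly $L$ on the diagonal, preserves the rank, and can only decrease $S$; it then applies the one-line eigenvalue Cauchy--Schwarz inequality $\tr(H)^2 \leq \rank(H)\,\tr(H^2)$, noting $\tr(H^2)=\sum_{i,j}|H_{ij}|^2 \leq nL^2+S$, which gives $n^2L^2 \leq \rank(H)(nL^2+S)$ immediately. You instead work with the orthogonal projection $P$ onto $\ker(H)$ and the identity $\tr(HP)=0$: the diagonal part is at least $Lk$ (using $P_{ii}\geq 0$, so no normalization of $H$ is needed), the off-diagonal part is controlled by entrywise Cauchy--Schwarz together with $\|P\|_F^2=k$ and the power-mean estimate $\sum_i P_{ii}^2\geq k^2/n$, yielding $L^2k^2\leq S\,k(n-k)/n$ and hence $k\leq nS/(nL^2+S)$ (the division by $k$ is harmless since $k=0$ is trivial). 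Both arguments give exactly the stated bound; the paper's is shorter and hides the work in the standard trace--rank inequality plus the scaling trick, while yours is a bit longer but avoids any preprocessing of $H$ and makes explicit, via the projection, where the factor $(n-k)/n$ comes from --- which is indeed the step you correctly identify as the crux, since dropping it only gives the weaker $k\leq S/L^2$.
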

\begin{proof}
First, notice that we can assume w.l.o.g that $H_{i,i} = L$ for all $i$. Indeed, otherwise we  scale the $i$'th row and column by $0< \sqrt{L/H_{ii}} \leq 1$ to get a new Hermitian matrix with $L$ on the diagonal and with smaller $S$. Then,
	$$n^2L^2 = \tr(H)^2 \leq \rank(H) \tr(H^2) = \rank(H) \cdot \sum_{i,j}|H_{i,j}|^2 = \rank(H)\cdot (nL^2 + S).$$
\end{proof}

The following claim is an easy consequence of Cauchy-Schwartz (applied coordinate-wise)
\begin{claim}\label{cla-CS-block}
Let $A_1,\ldots,A_t \in \M_{r,c}(1,1)$ then 
$$ \left\| \sum_{i \in [t]}A_i \right\|_2^2 \leq t \cdot \sum_{i \in [t]}\|A_i\|_2^2.$$	
\end{claim}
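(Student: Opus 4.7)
The plan is to reduce the matrix statement to an entrywise scalar inequality and then invoke the usual Cauchy--Schwarz bound coordinate by coordinate. First, I would unpack the Frobenius norm: for any matrix $M \in \M_{r,c}(1,1)$,
\[
\|M\|_2^2 \;=\; \tr(MM^*) \;=\; \sum_{j \in [r]}\sum_{k \in [c]} |M_{j,k}|^2,
\]
so $\|\cdot\|_2^2$ is simply the sum of squared moduli of the entries.

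Next, fix an entry position $(j,k)$ and apply the scalar Cauchy--Schwarz inequality to the $t$ complex numbers $(A_1)_{j,k},\ldots,(A_t)_{j,k}$:
\[
\left| \sum_{i \in [t]} (A_i)_{j,k} \right|^2 \;\leq\; t \cdot \sum_{i \in [t]} \bigl|(A_i)_{j,k}\bigr|^2.
\]
This is just the standard bound $|\langle \mathbf{1},x\rangle|^2 \leq \|\mathbf{1}\|^2 \|x\|^2 = t\|x\|^2$ applied to the vector $x = ((A_1)_{j,k},\ldots,(A_t)_{j,k}) \in \C^t$.

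Finally, sum the entrywise bound over all $(j,k) \in [r]\times[c]$:
\[
\left\| \sum_{i \in [t]} A_i \right\|_2^2 \;=\; \sum_{j,k} \left| \sum_{i \in [t]} (A_i)_{j,k} \right|^2 \;\leq\; t \sum_{j,k} \sum_{i \in [t]} |(A_i)_{j,k}|^2 \;=\; t \sum_{i \in [t]} \|A_i\|_2^2.
\]
There is no real obstacle here; the only thing to be careful about is to note that the Frobenius norm really is the sum of squared moduli of entries (so the entrywise Cauchy--Schwarz aggregates cleanly), and that the constant $t$ coming out of Cauchy--Schwarz is independent of the entry and hence factors out of the double sum.
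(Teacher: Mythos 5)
Your proof is correct, and it is exactly the argument the paper has in mind: the claim is stated there as ``an easy consequence of Cauchy--Schwartz (applied coordinate-wise),'' which is precisely your entrywise reduction followed by summing over all positions. Nothing further is needed.
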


Another useful claim:
\begin{claim}\label{cla-rowest}
Suppose $C_1,\ldots,C_q \in \M_{r,c}(1,1)$ are such that $\sum_{i\in[q]} C_iC_i^* = I_r$. Then
\begin{equation*}
	\sum_{i\neq j} \left\| C_i^*C_j \right\|_2^2 \leq r(1-1/q).
\end{equation*}	
\end{claim}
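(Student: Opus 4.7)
The plan is to expand the identity $I_r = \sum_i C_i C_i^*$ squared and then read off both sides of the desired bound from trace identities.

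First I would compute
\[
r = \tr(I_r) = \tr\!\left(\Bigl(\sum_{i} C_i C_i^*\Bigr)\Bigl(\sum_{j} C_j C_j^*\Bigr)\right) = \sum_{i,j} \tr(C_j^* C_i C_i^* C_j) = \sum_{i,j} \|C_i^* C_j\|_2^2,
\]
using $\tr(AB) = \tr(BA)$ and $\|X\|_2^2 = \tr(X X^*) = \tr(X^* X)$. Splitting the sum into diagonal and off-diagonal parts gives
\[
\sum_{i \neq j} \|C_i^* C_j\|_2^2 \;=\; r - \sum_{i=1}^{q} \|C_i^* C_i\|_2^2 \;=\; r - \sum_{i=1}^{q} \tr\bigl((C_i C_i^*)^2\bigr),
\]
so it suffices to prove the lower bound $\sum_{i=1}^q \tr((C_i C_i^*)^2) \geq r/q$.

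Set $P_i = C_i C_i^* \succeq 0$; these satisfy $\sum_i P_i = I_r$, hence $\sum_i \tr(P_i) = r$. I would now apply Cauchy--Schwarz twice in the Frobenius inner product. The first application, $\tr(P_i)^2 = \langle P_i, I_r\rangle^2 \leq \tr(P_i^2)\cdot\tr(I_r) = r\cdot\tr(P_i^2)$, yields $\tr(P_i^2) \geq \tr(P_i)^2/r$. The second application (to the $q$-vector of traces) gives
\[
\sum_{i=1}^{q}\tr(P_i)^2 \;\geq\; \frac{1}{q}\Bigl(\sum_{i=1}^{q}\tr(P_i)\Bigr)^{\!2} \;=\; \frac{r^2}{q}.
\]
Combining, $\sum_i \tr(P_i^2) \geq r/q$, which plugged into the identity above gives $\sum_{i\neq j}\|C_i^* C_j\|_2^2 \leq r(1-1/q)$, as required.

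There is no real obstacle here: the only thing to be careful about is making sure that the trace identity $\sum_{i,j}\|C_i^* C_j\|_2^2 = r$ is applied with the correct orientation (i.e., using $\tr(C_j^* C_i C_i^* C_j) = \|C_i^* C_j\|_2^2$ rather than $\|C_j^* C_i\|_2^2$), and keeping track of the fact that both Cauchy--Schwarz steps go in the same direction so that the inequalities combine without loss. The bound is tight when each $P_i = I_r/q$, which is consistent with the $1 - 1/q$ appearing on the right-hand side.
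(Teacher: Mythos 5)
Your proof is correct and follows essentially the same route as the paper: split $\sum_{i,j}\|C_i^*C_j\|_2^2$ into diagonal and off-diagonal parts, use $\sum_i C_iC_i^*=I_r$ to compute the full sum as $\tr(I_r^2)=r$, and lower-bound the diagonal part by $r/q$ via Cauchy--Schwarz. The only cosmetic difference is that the paper gets the diagonal bound in one step, applying its block Cauchy--Schwarz claim directly to $\|I_r\|_2^2=\left\|\sum_i C_iC_i^*\right\|_2^2\le q\sum_i\|C_iC_i^*\|_2^2$, whereas you pass through the traces $\tr(C_iC_i^*)$ with two Cauchy--Schwarz applications; both yield exactly $r/q$.
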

\begin{proof}
The sum in the claim is equal to the difference of the two sums:
$$S_1 - S_2 = \sum_{i,j}\left\|C_i^*C_j\right\|_2^2 - \sum_{i\in[q]}	\left\|C_i^*C_i\right\|_2^2. $$
First notice that
$$ S_1 = \sum_{i,j}\tr(C_i^*C_jC_j^*C_i)= \sum_{i,j}\tr(C_iC_i^*C_jC_j^*) = \tr(I_r^2) = r.$$ Next notice that, by Claim~\ref{cla-CS-block}, we have
$$S_2 = \sum_{i\in[q]}	\left\|C_i^*C_i\right\|_2^2 = \sum_{i\in[q]}	\left\|C_iC_i^*\right\|_2^2 \geq (1/q)\left\| I_r\right\|_2^2 = r/q.$$ These two calculations complete the proof.
\end{proof}

The bulk of the proof is given in the next lemma.
\begin{lem}\label{lem-design+scale=rank}
Suppose $M \in \M_{m,n}(r,c)$ is a $(q,k,t)$-design matrix that is scalable. Then $$ \rank(M) \geq nc - \frac{nc}{1+X},$$ with $$ X = \frac{mrq}{cnt(q-1)}.$$
\end{lem}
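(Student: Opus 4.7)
The plan is to reduce the rank bound to Lemma~\ref{lem-diagonaldom}, applied to the Gram matrix $\widetilde{M^*M}$, after first scaling $M$ so that both row and column normalizations are (nearly) satisfied. Since $M$ is scalable, for each $\eps>0$ there exists a scaling $M_\eps$ of $M$ with $\ds(M_\eps)\le\eps$. Scaling $M_{ij}\mapsto R_iM_{ij}C_j$ by invertible matrices preserves both the rank (since $\widetilde M$ is multiplied by invertible block-diagonal matrices on each side) and the $(q,k,t)$-design structure (zero blocks stay zero, and invertible left/right multiplication preserves the well-spread property of Definition~\ref{def-wellspread}). Since the target bound does not depend on $\eps$ and $\rank(M)=\rank(M_\eps)$, it suffices to prove the inequality assuming $M$ is doubly stochastic, up to an error that vanishes as $\eps\to 0$.

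So assume $M$ is doubly stochastic, and view $H:=\widetilde{M^*M}$ as an $nc\times nc$ Hermitian PSD matrix whose $(j,j')$-block equals $\sum_i M_{ij}^*M_{ij'}$. Column-normalization forces each diagonal block to equal $\frac{mr}{nc}I_c$, so every diagonal entry of $H$ is exactly $L:=mr/nc$, and every off-diagonal entry \emph{within} a diagonal block is zero. To apply Lemma~\ref{lem-diagonaldom} I therefore only need to bound the Frobenius mass $S:=\sum_{j\ne j'}\|(M^*M)_{jj'}\|_2^2$ coming from the off-diagonal blocks.

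Fix $j\ne j'$. The $t$-intersection property guarantees at most $t$ summands of $\sum_i M_{ij}^*M_{ij'}$ are nonzero, so Claim~\ref{cla-CS-block} gives $\|(M^*M)_{jj'}\|_2^2\le t\sum_i\|M_{ij}^*M_{ij'}\|_2^2$. Swapping the order of summation, I analyze one row at a time: row $i$ has at most $q$ nonzero blocks, and row-normalization gives $\sum_j M_{ij}M_{ij}^*=I_r$. Plugging these nonzero blocks into Claim~\ref{cla-rowest} (in the role of the $C_i$'s) yields $\sum_{j\ne j'}\|M_{ij}^*M_{ij'}\|_2^2\le r(1-1/q)$. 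Summing over the $m$ rows produces $S\le mr t(q-1)/q$.

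Feeding $L=mr/nc$, matrix size $nc$, and this $S$ into Lemma~\ref{lem-diagonaldom} gives $\rank(M)=\rank(H)\ge nc-nc/(1+X)$ with $X=ncL^2/S=\frac{mrq}{cnt(q-1)}$, matching the statement. The main technical obstacle is the $\eps$-perturbation: for the merely $\eps$-close-to-doubly-stochastic $M_\eps$, the diagonal blocks equal $\frac{mr}{nc}I_c$ only up to a Frobenius-norm $O(\sqrt{\eps})$ correction, and row-normalization used in Claim~\ref{cla-rowest} only holds up to the same error. One must therefore track that the effective values of $L$ and $S$ are $L-O(\sqrt{\eps})$ and $S+O(\sqrt{\eps})$ respectively, and verify that the resulting lower bound on $\rank(M_\eps)=\rank(M)$ converges to the clean bound above as $\eps\to 0$. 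This is routine but is the only analytic step beyond the algebraic identities; everything else is a direct combination of Claims~\ref{cla-CS-block} and~\ref{cla-rowest} with Lemma~\ref{lem-diagonaldom}.
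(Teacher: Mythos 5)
Your proposal is correct and follows essentially the same route as the paper: scale to (near) doubly stochastic form, bound the diagonal entries of $\widetilde{M^*M}$ via column normalization and the off-diagonal Frobenius mass via Claims~\ref{cla-CS-block} and~\ref{cla-rowest}, and finish with Lemma~\ref{lem-diagonaldom}, letting $\eps\to 0$. The only (cosmetic) difference is that the paper applies one extra row normalization so the row condition holds exactly and only the column error needs tracking, whereas you track $O(\sqrt{\eps})$ errors in both; your version is routine to complete, as you note.
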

\begin{proof}
Since scaling does not change rank and preserves the property of being a $(q,k,t)$-design, we may assume w.l.o.g that $M$ is already scaled (for some $\eps$ that we will later send to zero). Notice that we could, w.l.o.g, assume that the `row sums' of $M$ are perfectly scaled and that the `error' is only in the column sums (just apply one additional row normalization). That is, 
\begin{enumerate}
	\item For all $i \in [m]$, $\sum_{j \in [n]} M_{ij}M_{ij}^* = I_r$.
	\item For all $j \in [n]$, $\sum_{i \in [m]} M_{ij}^*M_{ij} = \frac{mr}{nc} I_c + E(\eps)$, where $E(\eps)$ is a matrix that goes to zero (entry wise)  with $\eps$ going to zero. 
\end{enumerate}

Let $H = M^* M$ be $nc \times nc$ complex Hermitian matrix. We will show that $H$ is $(L,S)$-diagonal dominant with 
\begin{equation}\label{eq-L}
L = \frac{rm}{cn}  + o(1), \eps \mapsto 0
\end{equation}
and
\begin{equation}\label{eq-S}
	S \leq mtr(1 - 1/q) + o(1), \eps \mapsto 0.
\end{equation}
Equation (\ref{eq-L}) follows from the scaling condition on the columns of $M$ since the diagonal $c \times c$ blocks of $H$ are $\frac{rm}{cn}I_c$ plus error that vanishes with epsilon. We now turn to prove the bound (\ref{eq-S})
 on $S$ (the sum of squares of off-diagonal entries). We have
$$ 	S = \sum_{j \neq j' \in [n]} \left\| \sum_{i \in [m]} M_{ij}^*M_{ij'} \right\|_2^2.$$
Using Claim~\ref{cla-CS-block} and the fact that the supports of two columns of $M$ intersect in at most $t$ blocks, we continue:
$$S \leq  t \sum_{i \in [m]}  \sum_{j \neq j' \in [n]} \left\|M_{ij}^*M_{ij'} \right\|_2^2. $$
Now, applying Claim~\ref{cla-rowest} and using the fact that each row of $M$ has at most $q$ non-zero blocks, we get
$$ S \leq tmr(1 - 1/q). $$

We can now apply Lemma~\ref{lem-diagonaldom} with the above $L$ and $S$ to get that
\begin{eqnarray*}
	cn - \rank(H) &\leq& \frac{cnmtr(1-1/q)}{(mr/nc + o(1))^2(nc) + mtr(1-1/q)) }\\
	&=& \frac{cn}{1+X} + o(1),
\end{eqnarray*}
with $ X = \frac{mrq}{cnt(q-1)}$. Since this inequality holds for all $\eps$ we can take $\eps$ to zero and conclude that it holds without the $o(1)$ term as well. The final observation is that $\rank(M) = \rank(H)$ and so we are done.
\end{proof}

We can now prove the main rank theorem for design matrices.
\begin{proof}[Proof of Theorem~\ref{thm-rank-design}]
Let $A \in \M_{m,n}(r,c)$ be a $(q,k,t)$-design matrix. Let $B \in \M_{nk,n}(r,c)$ be the matrix given by Claim~\ref{cla-regularize}. So $B$ is a $(q,k,qt)$-design matrix in regular form	with $\rank(B) \leq \rank(A)$. By Claim~\ref{cla-regular-scalable} $B$ is scalable. Thus, we can apply Lemma~\ref{lem-design+scale=rank} to conclude that
$$\rank(A) \geq  \rank(B) \geq cn - \frac{cn}{1+X}, $$ with 
$$ X = \frac{nkrq}{cntq(q-1)} = \frac{kr}{ct(q-1)}.$$ This completes the proof.
\end{proof}

\section{Projective rigidity}\label{sec-rigidity}

Below, we will prove the following rigidity theorem (following some corollaries and preliminaries).
\begin{thm}[Rigidity theorem]\label{thm-rigidity-triples}
	Let $V = (v_1,\ldots,v_n) \in (\C^d)^n$ be a list of $n$ points in $\C^d$ and let $T \subset {[n] \choose 3}$ be a multiset of triples on the set $[n]$ so that all triples in $T$ are collinear in $V$. Suppose that $P_V$ is a non singular point of ${\cal K}_T$ (as required in the definition of $r$-rigidity) and that:
	\begin{enumerate}
	 \item For each $i \in [n]$ there are at least $k$ triples in $T$ containing $i$ (counting repetitions).
	 \item For every $i \neq j \in [n]$ there are at most $t$ triples in $T$ containing both $i$ and $j$ (counting repetitions).
	 \item For all $0< \ell < d$ there are at most $\frac{\ell}{d}k$  triples in $T$ (counting repetitions) so that all of them intersect at some point and the corresponding triples in $V$ are contained in an $\ell$-dimensional affine subspace. 	
	\end{enumerate}
	Then,  $(V,T)$ is $r$-rigid with $$ r = \left\lfloor \frac{2d^2tn}{2dt + k(d-1)} \right\rfloor.$$
\end{thm}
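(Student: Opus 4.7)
The plan is to encode the tangent space of $\cK_T$ at the non-singular point $P_V$ as the kernel of a block design matrix $J$, and then invoke Theorem~\ref{thm-rank-design}. Since $P_V$ is non-singular on $\cK_T$, the dimension of its irreducible component equals $nd - \rank(J)$, so it is enough to lower bound $\rank(J)$. For each triple $\tau = (a,b,c) \in T$, take the (unique, up to scaling) dependence $\alpha_a^\tau v_a + \alpha_b^\tau v_b + \alpha_c^\tau v_c = 0$ with $\alpha_a^\tau + \alpha_b^\tau + \alpha_c^\tau = 0$; at a non-degenerate collinear triple all three $\alpha_i^\tau$'s are nonzero. Let $\Pi_\tau \in \M_{d-1,d}(1,1)$ be a surjection whose kernel is the direction of the line through $\tau$. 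Linearizing the collinearity constraint for $\tau$ shows that $(w_1,\ldots,w_n) \in (\C^d)^n$ satisfies the linearized constraint iff $\Pi_\tau(\alpha_a^\tau w_a + \alpha_b^\tau w_b + \alpha_c^\tau w_c) = 0$, i.e.\ $d-1$ independent scalar equations. Assembling these for all $\tau \in T$ yields a matrix $J \in \M_{|T|, n}(d-1, d)$ with block $J_{\tau, i} = \alpha_i^\tau \Pi_\tau$ when $i \in \tau$ and zero otherwise, whose kernel is the tangent space of $\cK_T$ at $P_V$.

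Next I would verify that $J$ is a $(3, k, t)$-design matrix. Each row has exactly three nonzero blocks, so $q = 3$. Two distinct columns $i \neq j$ of $J$ have overlapping supports precisely at rows $\tau$ containing both indices, which hypothesis~(2) of the theorem bounds by $t$. The main step is the well-spread condition: for each point $i$, use hypothesis~(1) to choose $k$ triples containing $i$. Since $\alpha_i^\tau \neq 0$, the block $\alpha_i^\tau \Pi_\tau$ has the same image and kernel as $\Pi_\tau$, so for any subspace $W \subseteq \C^d$ of dimension $\ell$ one has $\dim(\Pi_\tau(W)) = \ell - \mathbf{1}[\mathrm{dir}(L_\tau) \in W]$, where $L_\tau$ is the line through $\tau$. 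The required inequality $\sum_\tau \dim(\Pi_\tau(W)) \geq (d-1)k\ell/d$ reduces to
$$|\{\tau : \mathrm{dir}(L_\tau) \in W\}| \leq \frac{\ell k}{d}.$$
Each chosen $L_\tau$ passes through $v_i$, and $\mathrm{dir}(L_\tau) \in W$ is equivalent to $L_\tau \subseteq v_i + W$, an $\ell$-dimensional affine subspace containing the common point $v_i$. Hypothesis~(3) of the theorem is exactly this bound.

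Finally, I would apply Theorem~\ref{thm-rank-design} with $r = d-1$, $c = d$, $q = 3$, obtaining $\rank(J) \geq nd - nd/(1+X)$ with $X = k(d-1)/(2dt)$. Hence the dimension of the irreducible component is at most
$$nd - \rank(J) \leq \frac{nd}{1+X} = \frac{2d^2 tn}{2dt + k(d-1)},$$
which matches the stated bound after taking the floor. The main technical obstacle is the verification of the well-spread condition, namely translating hypothesis~(3) into the required linear-algebraic inequality and handling the implicit non-degeneracy of the triples (the existence of a well-defined $\Pi_\tau$ and the non-vanishing of the $\alpha_i^\tau$'s), which is guaranteed by the hypothesis that $P_V$ is a non-singular point of $\cK_T$.
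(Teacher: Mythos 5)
Your proposal is correct and follows essentially the same route as the paper: your matrix $J$ (blocks $\alpha_i^\tau\Pi_\tau$) coincides, up to a nonzero scalar per row, with the paper's rigidity matrix built from $\Delta(v_j-v_k)$-blocks, your well-spread verification via hypothesis~(3) matches the paper's Claims~\ref{cla-delta-wellspread} and~\ref{cla-rigidity-design}, and the final application of Theorem~\ref{thm-rank-design} with $r=d-1$, $c=d$, $q=3$ is identical. The only slight overstatement is the claim that the component's dimension \emph{equals} $nd-\rank(J)$; since the kernel of $J$ may strictly contain the tangent space, only the inequality $\dim \leq nd-\rank(J)$ is justified, but that is all the argument (and the paper) needs.
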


For example, if we have a triple system in $\C^2$ in which every pair is in exactly one triple and so that no line contains more than half the points, we get that the configuration is $15$-rigid. Indeed, setting $k=(n-1)/2, d=2,t=1$ the bound on $r$ becomes
$$ \left\lfloor  \frac{8n}{4 + (n-1)/2} \right\rfloor = \left\lfloor 16\cdot \frac{n}{n+7} \right\rfloor = 15.$$
We now discuss the implications for $\delta$-SG (Sylvester-Gallai) configurations, defined in \cite{BDWY12}.
\begin{define}[$\delta$-SG configuration]
A list $V = (v_1,\ldots,v_n) \in (\C^d)^n$ is called a $\delta$-SG configuration if for each $i \in [n]$ there exist at least $\delta (n-1)$ values of $j \in [n]\setminus\{i\}$ for which the line through $v_i,v_j$ contains a third point from the set.	
\end{define}

A theorem from \cite{DSW12} shows that a $\delta$-SG configuration must be contained in an affine  subspace of dimension at most $O(1/\delta)$.  We can use Theorem~\ref{thm-rigidity-triples} to prove the following result. In view of \cite{DSW12} this corollary is only interesting when $d = (1/\delta)$.

\begin{cor}
Let $V = (v_1,\ldots,v_n) \in (\C^d)^n$ be a $\delta$-SG configuration and let $T$ be the family of all collinear triples in $V$. Suppose that, for every $0<\ell< d$, any $\ell$-dimensional affine subspace of $\C^d$ contains at most $\frac{\delta \ell n}{d}$ points of $V$. Then $(V,T)$ is $\frac{12d}{\delta}$-rigid.
\end{cor}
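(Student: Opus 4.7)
The plan is to apply Theorem~\ref{thm-rigidity-triples}, but not with $T$ itself. Instead, I will construct a sub-multiset $T' \subseteq T$ of collinear triples and apply the theorem to $(V, T')$. Since $T'$ imposes only a subset of the collinearity constraints defining $\cK_T$, we have $\cK_{T'} \supseteq \cK_T$, so the irreducible component of $P_V$ in $\cK_T$ has dimension at most that of its component in $\cK_{T'}$; any $r$-rigidity bound established for $(V, T')$ thus transfers to $(V, T)$.

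To build $T'$, for every pair $\{a,b\}$ such that the line through $v_a, v_b$ contains at least a third point of $V$ (call such a pair rich), I fix a third point $v_{k_{a,b}}$ on this line and place $\{a, b, k_{a,b}\}$ into $T'$ as a multiset element indexed by $\{a,b\}$. The choice of $k_{a,b}$ is made in a balanced way along each rich line: on a line $L$ containing $p$ points of $V$, the $\binom{p}{2}$ pairs are distributed as evenly as possible over the $p-2$ possible target points, so every point on $L$ serves as $k$ for at most $\lceil (p-1)/2 \rceil$ of them.

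With this construction, the $\delta$-SG condition produces at least $\delta(n-1)$ triples in $T'$ containing any given $v_i$ just from pairs of the form $\{i,j\}$, plus a further $\sim \delta(n-1)/2$ triples from pairs where $i$ is used as $k_{a,b}$, giving $k \gtrsim \tfrac{3}{2}\delta(n-1)$. For the same reason, any fixed pair $\{i,j\}$ sits in at most a constant number of triples of $T'$ (at most $5$): one from the pair itself, plus at most two each from the ``$j = k_{i,c}$'' and ``$i = k_{j,c}$'' sources, by the balance property.

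The main obstacle is verifying the third condition of Theorem~\ref{thm-rigidity-triples}, which requires, for every $v_i$ and every $\ell$-dimensional affine subspace $W$, that the number of triples of $T'$ containing $i$ whose three points all lie in $W$ be at most $(\ell/d)k$. The hypothesis gives $|V \cap W| \leq \delta \ell n / d$, so the triples $\{i, j, k_{i,j}\}$ with $v_j \in W$ number at most $\delta \ell n /d$. By affineness, any rich line through $v_i$ with a second point in $W$ lies entirely in $W$, so the balanced scheme summed over these lines contributes at most $\delta \ell n / (2d)$ triples of the form $\{a, b, i\}$ with $v_a, v_b \in W$. The total $\tfrac{3}{2}\delta \ell n/d$ matches $(\ell/d)k$ for the lower bound on $k$ above. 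Plugging $t = O(1)$ and $k = \Theta(\delta n)$ into $r = \lfloor 2d^2 t n/(2dt + k(d-1)) \rfloor$ and letting $n$ be large gives $r \leq 12d/\delta$, which by the monotonicity above yields the claimed rigidity of $(V, T)$.
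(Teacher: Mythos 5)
Your overall strategy is the paper's: pass to a sub-multiset $T'\subseteq T$ of collinear triples, note that rigidity for $(V,T')$ transfers to $(V,T)$, and feed $(V,T')$ into Theorem~\ref{thm-rigidity-triples} with $t=O(1)$ and $k=\Theta(\delta n)$. The difference is how $T'$ is built, and here there are two genuine gaps. First, the pair bound. Your balance property only says that on a line with $p$ points each point serves as the third element for at most $\lceil (p-1)/2\rceil$ pairs; it says nothing about how those pairs are distributed. A triple of $T'$ containing both $i$ and $j$ arises whenever a pair $\{i,c\}$ on the line through $v_i,v_j$ is assigned target $j$ (or symmetrically), and an ``as evenly as possible'' assignment may route all of $j$'s quota through pairs containing $i$; then the pair $\{i,j\}$ lies in $\Omega(p)$ triples, not at most $5$. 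Since the bound of Theorem~\ref{thm-rigidity-triples} degrades linearly in $t$, this breaks the final constant. Fixing it needs a structured choice (e.g.\ on a line with $p$ odd, indexing points by $\Z_p$ and assigning to $\{a,b\}$ the target $(a+b)/2$, which gives $t\le 3$), or simply the triple system of Lemma~\ref{lem-steiner} used by the paper, whose whole point is the pairwise bound $6$.

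Second, your verification of condition 3 has no slack and in fact fails. You bound the number of triples through $i$ lying in an $\ell$-dimensional $W$ by about $\tfrac{3}{2}\delta\ell n/d$ and compare with $(\ell/d)k$ for $k\gtrsim \tfrac{3}{2}\delta(n-1)$; but $\tfrac{3}{2}\delta\ell n/d > (\ell/d)\cdot\tfrac{3}{2}\delta(n-1)$, so ``matches'' is already too weak, and the lower bound on $k$ is itself not guaranteed: the balance property is an upper bound on target counts, and even an exactly balanced assignment loses floors on even-sized lines (if every rich line through some point has $4$ points, the extra is only about $\delta(n-1)/3$), while the in-$W$ count at another point can be proportionally larger. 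The paper avoids exactly this: by Lemma~\ref{lem-steiner} each point of a line with $r$ points lies in exactly $3(r-1)$ triples (so $k=3\delta(n-1)$) and each pair in at most $6$, and then more than $(\ell/d)k$ concurrent triples inside $W$ force more than $1+\delta\ell(n-1)/d>\delta\ell n/d$ points of $V$ in $W$, the strict inequality coming from the slack $\delta\ell/d<1$. Your construction produces roughly $\tfrac{3}{2}$ triples per point of $W$ against a guaranteed $k$ of only $\tfrac{3}{2}\delta(n-1)$ (or less), leaving no such margin. Finally, ``letting $n$ be large'' is not available: the statement is claimed for all $n$, and the paper's final computation is done for all $n$ with the floor.
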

\begin{proof}
 For each line containing $r\geq 3$ points we construct a triple multiset of $r^2 - r$ triples so that each point on the line is in exactly $3(r-1)$ triples and every pair is in at most $6$ triples (see Lemma~\ref{lem-steiner}). Taking the union of all these triples we get a family of	triples $T' \subset T$ (containment as sets, not multisets) and so it is enough to bound the rigidity of the pair $(V,T')$. Each point is in at least $k = 3\delta(n-1)$ triples in $T'$ and every pair is in at most $6$. To apply Theorem~\ref{thm-rigidity-triples} we need to argue that every $\ell$-dim affine subspace can contain at most $\frac{\ell}{d}k = \frac{3\ell \delta (n-1)}{d}$ intersecting triples in $T'$. If there exist an affine subspace $W$ that violates this inequality then $V$ must contain at least
$$  1+ 2 \cdot \frac{3\ell\delta(n-1)}{d} \cdot \frac{1}{6} > \frac{\delta \ell n}{d}$$ points of $V$ contradicting the assumptions. Applying Theorem~\ref{thm-rigidity-triples} (with $t=6$ and $k = \delta(n-1)$) we get that $(V,T')$ is $r$-rigid with 
$$ r = \left\lfloor \frac{2d^26n}{2d6 + \delta(n-1)(d-1)} \right\rfloor\leq \frac{12 d}{\delta}.$$ 
\end{proof}

\subsection{The rigidity matrix}
For a pair $(V,T) \in COL(n,d)$ we define a matrix $A = A(V,T) \in \M_{m,n}(d-1,d)$ with $m = |T|$ called the rigidity matrix of $(V,T)$. The matrix will be defined so that $dn - \rank(A)$ will upper bound the rigidity of $(V,T)$. To this end, we first define a certain $d-1 \times d$ block that will be used in the construction of $A$.

\begin{define}
Let $w = (w_1,\ldots,w_d) \in \C^d$ we define the matrix $\Delta(w) \in \M_{d-1,d}(1,1)$ as
\begin{equation*}
	\Delta(w) = \left(\begin{matrix}
  w_2 & -w_1 & 0    & & \cdots & 0 \\
  w_3 & 0    & -w_1 & 0 & \cdots & 0\\
   \cdots & & & & & \\
   w_d & 0 & & \cdots & 0 & -w_1 
\end{matrix}\right).
\end{equation*}	
Notice that, if $w_1 \neq 0$, then $\ker(\Delta(w)) = \spn(w)$.
\end{define}

\begin{define}[rigidity matrix]
Given $(V,T) \in COL(n,d)$ we construct $A = A(V,T) \in \M_{m,n}(d-1,d)$ with $m = |T|$ as follows: For each triple $(i,j,k) \in T$ we add to $A$ a row that has entry $\Delta(v_j - v_k)$ in position $i$, entry $\Delta(v_k-v_i)$ in position $j$, entry $\Delta(v_i - v_j)$ in position $k$ and zero blocks everywhere else. If $T$ is a multiset and a triple repeats several times, we also repeat the corresponding row in $A$ the same number of times.
\end{define}

\begin{claim}\label{cla-rigidity-matrix}
	If $A(V,T)$ has rank $dn - r$ then $(V,T)$ is $r$-rigid.
\end{claim}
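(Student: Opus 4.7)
The plan is to recognize $A(V,T)$ as the Jacobian, evaluated at $P_V$, of a family of polynomial equations vanishing on $\cK_T$, and then invoke the classical bound on the local dimension of a variety at a non-singular point.

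For each triple $\tau=(i,j,k)\in T$ and each $l\in\{2,\ldots,d\}$, I would introduce the polynomial $F_{\tau,l}$ on $\C^{nd}$ defined by
$$F_{\tau,l}(x_1,\ldots,x_n) \;=\; \det\!\begin{pmatrix} x_{i,1} & x_{i,l} & 1 \\ x_{j,1} & x_{j,l} & 1 \\ x_{k,1} & x_{k,l} & 1 \end{pmatrix}.$$
Three collinear points in $\C^d$ have collinear projections to every coordinate $2$-plane, so $F_{\tau,l}$ vanishes identically on $\cK_T$ for every $\tau$ and every $l$; hence all these polynomials lie in the ideal of $\cK_T$.

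The key step is then a short gradient computation at $P_V$. The partials of $F_{\tau,l}$ with respect to $v_{i',m}$ vanish for $i'\notin\{i,j,k\}$; at position $i$, expanding the determinant yields the row vector in $\C^d$ with $(v_j-v_k)_l$ in the first coordinate, $-(v_j-v_k)_1$ in the $l$-th coordinate, and zeros elsewhere---that is, precisely the $(l-1)$-th row of the block $\Delta(v_j-v_k)$. The analogous expansions at positions $j$ and $k$ produce the rows of $\Delta(v_k-v_i)$ and $\Delta(v_i-v_j)$. Grouping the gradients by $\tau$ and letting $l$ range over $\{2,\ldots,d\}$ reproduces exactly the rows of $\tilde{A}(V,T)$.

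From here the conclusion is classical. The Zariski tangent space to $\cK_T$ at $P_V$ is annihilated by the differential of every element in the ideal of $\cK_T$, and in particular by every $\nabla F_{\tau,l}(P_V)$; it therefore sits inside $\ker\tilde{A}(V,T)$, whose $\C$-dimension is $nd-\rank(A(V,T))=r$. Since the definition of $r$-rigidity builds in that $P_V$ is a non-singular point of $\cK_T$, the local dimension of $\cK_T$ at $P_V$ equals the dimension of its Zariski tangent space, so the irreducible component of $\cK_T$ through $P_V$ has dimension at most $r$, which is exactly $r$-rigidity. The only calculation of any substance is the gradient computation, and it is entirely mechanical. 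A mild conceptual wrinkle that does not create an actual obstacle is that the vanishing locus of all the $F_{\tau,l}$'s may be strictly larger than $\cK_T$ (degenerate configurations in which triples share a first coordinate satisfy every $F_{\tau,l}=0$ without being collinear in $\C^d$); set-theoretic equality is not needed, only the containment of ideals supplied by the first paragraph.
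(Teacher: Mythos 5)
Your proposal is correct and follows essentially the same route as the paper: the paper also differentiates the $3\times 3$ determinant collinearity identities (for $\ell=2,\ldots,d$) and identifies the resulting derivatives with the blocks $\Delta(v_j-v_k)$, $\Delta(v_k-v_i)$, $\Delta(v_i-v_j)$, concluding that tangent directions to $\cK_T$ at $P_V$ lie in $\ker\tilde A$, whose dimension is $nd-\rank(A)=r$. The only cosmetic difference is that the paper phrases the argument via the velocity vector of a smooth curve in $\cK_T$, whereas you phrase it via the Jacobian and the Zariski tangent space at the non-singular point $P_V$; these amount to the same computation and the same conclusion.
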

\begin{proof}
Let $P(t)$ be a smooth curve in $\cK_T \subset \C^{nd}$ with $P(0) = P_V$. Let $\dot{P}(t)$ be the tangent vector. Then we claim that $A \cdot \dot{P}(0) = 0.$ By the construction of $A$ it is enough to show that, for a triple $(i,j,k) \in T$ we have
$$ \Delta(v_j - v_k)\cdot \dot{v_i}(0) + \Delta(v_k - v_i)\cdot \dot{v_j}(0) + \Delta(v_i - v_j)\cdot \dot{v_k}(0) = 0. $$

 This follows by taking the derivative w.r.t the variable $t$ of the $d-1$ identities (for $\ell = 2 \ldots d$) that hold for every collinear triple $v_i,v_j,v_k$ and any $t$.
\begin{equation}
	\det\left(\begin{matrix}
	1 & v_{i1}(t) & v_{i\ell}(t) \\
	1 & v_{j1}(t) & v_{j\ell}(t) \\
	1 & v_{k1}(t) & v_{k\ell}(t) 
\end{matrix}	
 \right) = 0.
\end{equation}
 Hence, the vector $\dot{P}(0)$ must lie in an $r$ dimensional subspace. This implies that the dimension of $\cK_T$  at $P_V$ is at most $r$.
\end{proof}

\subsection{Proof of Theorem~\ref{thm-rigidity-triples}}

Let $(V,T)$ be as in the statement of the theorem and let $A = A(V,T)$ be the corresponding rigidity matrix. We may assume w.l.o.g that the vectors $v_1,\ldots,v_n$ forming $V$ are distinct in the first coordinate (this can be achieved by applying a generic affine transformation).

\begin{claim}\label{cla-delta-wellspread}
Let $w_1,\ldots,w_k \in \C^d$ be such that the first coordinate in each $w_i$ is non zero and such that, for all $0< \ell <d$, any $\ell$-dimensional subspace of $\C^d$ contains at most $\frac{\ell}{d}k$ of the $w_i$'s. Then, the set of matrices $\Delta(w_1),\ldots,\Delta(w_k) \in \M_{d-1,d}(1,1)$ is well-spread.	
\end{claim}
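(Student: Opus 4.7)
The plan is to unpack the definition of well-spread directly and reduce the inequality to the hypothesis on $\ell$-dimensional subspaces.

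First, I would compute $\dim(\Delta(w_i)(V))$ for an arbitrary subspace $V \subset \C^d$. Since $w_{i,1} \neq 0$ (the first coordinate), the note in the definition of $\Delta$ gives $\ker(\Delta(w_i)) = \spn(w_i)$, and the $(d-1)\times d$ matrix $\Delta(w_i)$ has full row rank. By rank-nullity applied to the restriction $\Delta(w_i)|_V$,
\[
\dim(\Delta(w_i)(V)) \;=\; \dim(V) - \dim(V \cap \spn(w_i)),
\]
and this intersection has dimension $1$ if $w_i \in V$ and dimension $0$ otherwise.

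Next, setting $\ell = \dim(V)$ and letting $N(V) = |\{i \in [k] : w_i \in V\}|$, the well-spread inequality to establish becomes
\[
k\ell - N(V) \;\geq\; \frac{(d-1)k}{d}\,\ell,
\]
which rearranges to the clean condition $N(V) \leq \frac{k\ell}{d}$. So the whole claim reduces to verifying this bound for every subspace $V \subset \C^d$.

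Finally I would handle the three ranges of $\ell$: if $\ell = 0$ then $V = \{0\}$, and since every $w_i$ has nonzero first coordinate, $N(V) = 0 \leq 0$; if $\ell = d$, then $V = \C^d$ and the inequality becomes $k \leq k$; and for $0 < \ell < d$, the hypothesis of the claim says that every $\ell$-dimensional subspace contains at most $\frac{\ell}{d} k$ of the $w_i$'s, which is exactly $N(V) \leq \frac{k\ell}{d}$. There is no real obstacle here: once the kernel computation identifies $\dim \Delta(w_i)(V)$ with the number of $w_i$'s sitting inside $V$, the well-spread condition is a literal restatement of the hypothesis.
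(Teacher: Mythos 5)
Your proof is correct and follows essentially the same route as the paper's: identify $\dim(\Delta(w_i)(V))$ as $\ell-1$ or $\ell$ according to whether $w_i\in V$ (via $\ker(\Delta(w_i))=\spn(w_i)$), sum over $i$, and invoke the hypothesis for $0<\ell<d$ while checking $V=\{0\}$ and $V=\C^d$ separately. Your rearrangement to the condition $N(V)\leq \frac{k\ell}{d}$ and the explicit rank--nullity justification are just a slightly more detailed presentation of the same argument.
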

\begin{proof}
Fix a subspace $V \subset \C^d$ of dimension $0<\ell<d$. We have that $\dim \left(\Delta(w_i)(V) \right)$ is equal to $\ell-1$ if $w_i \in V$ and to $\ell$ otherwise. Hence,
\begin{eqnarray*}
	\sum_{i \in [k]} \dim(\Delta(w_i)(V) &\geq& (k\ell/d)(\ell - 1) + (k - (k\ell)/d)\ell \\
	&=& \frac{k\ell(d-1)}{d}.
\end{eqnarray*}	
We then only have to argue that the definition of well-spread set is satisfied also for the special case of $V = \{0\}$ and $V = \C^d$. The  first is trivial to see and the second follows since each $\Delta(w_i)$ is full rank.
\end{proof}

\begin{claim}\label{cla-rigidity-design}
	The rigidity matrix $A \in \M_{m,n}(d-1,d)$ is a $(3,k,t)$-design matrix.
\end{claim}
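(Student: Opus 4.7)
The plan is to verify the three defining conditions of a $(q,k,t)$-design matrix one by one, taking $q=3$, with the well-spread condition on columns being the only nontrivial step.

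For condition~1 (row sparsity), observe that by construction each row of $A$ is indexed by a triple in $T$, and the definition of the rigidity matrix places exactly three non-zero blocks in that row (one in each of the three positions of the triple). So every row has exactly $3$ non-zero blocks, giving $q=3$.

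For condition~3 (column support intersection), note that columns $j \ne j'$ both have a non-zero block in row $i$ if and only if the triple labeling row $i$ contains both $j$ and $j'$. By hypothesis~2 of Theorem~\ref{thm-rigidity-triples}, at most $t$ triples in $T$ contain any fixed pair $\{j,j'\}$, so the supports intersect in at most $t$ positions.

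The main step is condition~2 (well-spread columns). Fix $i\in[n]$. By hypothesis~1 there are at least $k$ triples in $T$ containing $i$; pick any $k$ of them, say $\{i,j_s,k_s\}$ for $s=1,\ldots,k$, and let $w_s = v_{j_s}-v_{k_s}$. The corresponding blocks in column $i$ are the matrices $\Delta(w_s)$ (up to sign, which is irrelevant since $\Delta$ is linear and the well-spread condition is unchanged under scaling by nonzero scalars). I plan to invoke Claim~\ref{cla-delta-wellspread} to conclude that these $k$ blocks form a well-spread set of size $k$ in $\M_{d-1,d}(1,1)$. To apply that claim I need two facts: first, each $w_s$ has nonzero first coordinate, which holds because we may assume (after a generic affine change of coordinates) that the first coordinates of the $v_i$'s are all distinct; second, for every $0<\ell<d$, any $\ell$-dimensional linear subspace $U\subset\C^d$ contains at most $\tfrac{\ell}{d}k$ of the $w_s$'s.

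The second bound is the crux, and this is precisely where hypothesis~3 of Theorem~\ref{thm-rigidity-triples} enters. If $w_s\in U$ for some collinear triple $\{i,j_s,k_s\}$, then the entire line through $v_{j_s}, v_{k_s}, v_i$ has direction in $U$, so all three points lie in the $\ell$-dimensional affine subspace $v_i + U$. Moreover every such triple passes through the common point $v_i$. Therefore the set of selected triples with $w_s\in U$ is a family of triples that (a) all meet at the common point $v_i$, and (b) lie in a common $\ell$-dimensional affine subspace. Hypothesis~3 bounds the size of any such family by $\tfrac{\ell}{d}k$, as required. Claim~\ref{cla-delta-wellspread} then gives the well-spread property, completing the verification.

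I do not expect any technical obstacle beyond organizing these observations; the only subtle point is that hypothesis~3 applies to arbitrary families of triples meeting at a point and lying in an $\ell$-flat, which matches exactly the configuration forced on selected triples containing $i$ whose direction lies in $U$.
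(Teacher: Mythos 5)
Your proposal is correct and follows essentially the same route as the paper: verify row sparsity and pairwise column intersections directly from the construction and hypotheses 1--2, then apply Claim~\ref{cla-delta-wellspread} to the blocks $\Delta(v_{j_s}-v_{k_s})$ in column $i$, using the generic-coordinates assumption for nonzero first coordinates and hypothesis~3 for the subspace-counting condition. Your explicit observation that a direction $w_s\in U$ forces the whole collinear triple (which contains $v_i$) into the $\ell$-dimensional affine flat $v_i+U$ is precisely the step the paper leaves implicit, so this is the same argument with a detail filled in.
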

\begin{proof}
By construction, each row of $A$ has three non zero blocks. Pairwise intersections of columns follow from the assumption that at most $t$ triples contain a particular pair of points. Now, consider $k$ triples of $T$ containing a particular point $v_i$ (we assume at least $k$ such triples exist). The corresponding blocks in the $i$'th column of $A$ are given by $\Delta(v_k - v_j)$ with $v_j,v_k$ being the other two points in that triple. Notice that all the vectors $v_k-v_j$ have a non-zero first coordinate and so we can use the fact that the kernel of  $\Delta(v_k-v_j)$  is $\spn(v_k - v_j)$. Since we assume that no $\ell$-dimensional affine subspace contains more than $\frac{\ell}{d}k$ of these $k$ (intersecting) triples,  by Claim~\ref{cla-delta-wellspread},  these $k$ entries will form a well-spread set. 
\end{proof}

Using the last claim, we can apply Theorem~\ref{thm-rank-design} to conclude that $$ dn - \rank(A) \leq \frac{dn}{1+\frac{k(d-1)}{2dt}} = \frac{2d^2tn}{2dt + k(d-1)}.$$ Noticing that the rank is an integer, we can add the floor to the obtained bound. This completes the proof of the theorem \qed


\section{Sylvester-Gallai for subspaces}\label{sec-highsg}

In this section we prove Theorem~\ref{thm-tightsg}. Let $k = \delta (n-1)$ and assume w.l.o.g that $k$ is an integer. For each $i \in [n]$ pick some basis $B_i = \{v_{i1},\ldots,v_{i\ell}\}$ for the subspace $V_i$. Let $A_V \in \M_{n\ell,d}(1,1)$ be the matrix whose first $\ell$ rows are the elements of $B_1$, the next $\ell$ rows are the elements of $B_2$ etc up to $B_n$. Our goal is then to prove an upper bound on the rank of $A_V$. For that purpose we will construct another matrix $A_C \in \M_{m,n}(\ell,\ell)$ of high rank such that $A_C \cdot A_V = 0$. 

We will now describe how to construct the matrix $A_C$. The first step is to construct a multiset of triples $T \subset {[n] \choose 3}$. We will use the following simple lemma from \cite{DSW12}.
\begin{lem} \label{lem-steiner}
Let $r\geq 3$.  Then there exists a multiset $U\subset {[r] \choose 3}$ of $r^2-r$ triples satisfying the following properties:
\begin{enumerate}
\item For each $i\in [r]$ there are exactly $3(r-1)$ triples in $U$ containing $i$ as an element.
\item  For every pair $i,j\in [r]$ of distinct elements there are at most 6 triples in $U$ containing both i and j as elements.
\end{enumerate}
\end{lem}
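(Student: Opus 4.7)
My plan is to construct $U$ explicitly as the multiset of $3$-term arithmetic progressions in the cyclic group $\Z/r\Z$. Identify $[r]$ with $\Z/r\Z$, and for each pair $(a,d) \in \Z/r\Z \times (\Z/r\Z \setminus \{0\})$ include the unordered triple $\{a,\, a+d,\, a+2d\}$ as one element of the multiset $U$. I first carry this out under the simplifying assumption that $r$ is odd, and then describe a local modification for even $r$.

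For odd $r$, there are $r(r-1)$ choices of $(a,d)$, and $a,\, a+d,\, a+2d$ are pairwise distinct whenever $d\ne 0$ (using that $2$ is invertible mod $r$), so $|U| = r^2-r$ and every element of $U$ is a genuine triple. To verify condition (1), fix $i\in[r]$ and for each nonzero $d$ count the $a$'s with $i\in\{a,a+d,a+2d\}$: these are exactly $a=i,\ a=i-d,\ a=i-2d$, giving $3$ triples per $d$ and hence $3(r-1)$ in total. For condition (2), fix a pair $\{i,j\}$ and count $(a,d)$'s with $\{i,j\}\subset\{a,a+d,a+2d\}$. There are $\binom{3}{2}\cdot 2 = 6$ ways to assign $(i,j)$ to two of the three slots; each such assignment, via the invertibility of $2$ mod $r$, determines $(a,d)$ uniquely, so at most $6$ triples in $U$ contain $\{i,j\}$.

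For even $r$ the common difference $d=r/2$ produces degenerate ``triples'' $\{a,\, a+r/2,\, a\}$ with only two distinct elements; I would drop these $r$ entries and compensate by inserting $r$ replacement triples (for instance from a second small family, such as $\{a,\, a+1,\, a+r/2\}$ or APs from a smaller odd-order cyclic subgroup lifted to $[r]$) chosen so that each element gains exactly $2$ new incidences and no pair gains more than $2$. Since the degenerate removal had stripped $2$ incidences from every element and contributed nothing to the pair count, both conditions are restored, with the pair bound remaining $\le 6$.

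The main obstacle is exactly this bookkeeping for even $r$: constructing the replacement set is routine but slightly fiddly because we must restore the per-element count \emph{exactly} while leaving the pair-count bound intact. An alternative that avoids case analysis entirely is a probabilistic construction: sample $r(r-1)$ triples from a carefully chosen distribution on $\binom{[r]}{3}$ (uniform on $3$-APs for odd $r$, or a mixture for even $r$), compute expected element- and pair-degrees, and use Chernoff/Azuma concentration together with a union bound to conclude that a good $U$ exists. Either route gives the lemma; the explicit AP construction has the advantage of being completely deterministic.
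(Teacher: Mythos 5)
Your construction for odd $r$ is correct and cleanly verified: the $r(r-1)$ pairs $(a,d)$ with $d\neq 0$ give $r^2-r$ genuine triples, each element lies in exactly $3$ of them for each of the $r-1$ values of $d$, and each of the $6$ ways of placing an ordered pair into the three slots determines $(a,d)$ uniquely, so the pair bound $6$ holds. (For what it's worth, the paper does not prove this lemma at all -- it imports it from \cite{DSW12} -- so a self-contained construction like yours is welcome.)

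The even case, however, is a genuine gap, not just fiddly bookkeeping. First, the accounting is off: after deleting the $r$ degenerate entries with $d=r/2$, each element lies in $3(r-2)=3(r-1)-3$ triples, so the replacement family must add exactly $3$ incidences per element, not $2$; indeed $r$ new triples carry $3r$ incidences in total, so ``$2$ per element'' is numerically impossible. Second, and more fatally, for even $r$ the non-degenerate AP family already violates condition (2): if $\delta=j-i$ is a nonzero even residue, the equation $2d=\delta$ has \emph{two} solutions $d$ and $d+r/2$ in $\Z/r\Z$, so the slot-assignments $(1,3)$ and $(3,1)$ each contribute two entries and the pair $\{i,j\}$ lies in $8$ triples. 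Concretely, for $r=6$ the pair $\{0,2\}$ is contained in the entries indexed by $(a,d)=(0,2),(4,2),(2,2),(2,4),(0,4),(4,4),(0,1),(2,5)$. Since adding replacement triples can only increase pair counts, no insertion scheme repairs this; you would have to modify the main family itself (e.g.\ prune the duplicate representations coming from $d$ and $d+r/2$, or use a different construction for even $r$). Finally, the probabilistic fallback as described cannot work either, because item (1) demands that each element lie in \emph{exactly} $3(r-1)$ triples, and Chernoff/Azuma concentration only yields approximate degree counts.
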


Notice that we are using multisets as, for example, if $r=3$ we must use the same (and only) triple with multiplicity $6$. Since the pair-wise intersections of the $V_i$'s are all trivial, every pair of them spans a $2\ell$ dimensional subspace of $\C^d$. We will call a $2\ell$ dimensional subspace of $\C^d$ {\em special} if it contains at least three of the $V_i$'s. For every special $2\ell$-dimensional space containing $r \geq 3$ spaces among the $V_i$'s we use Lemma~\ref{lem-steiner} to construct a multiset of $r^2 - r$ triples on the $r$ spaces contained in that special subspace satisfying the  two conditions of the lemma (we view these triples as triples in $[n]$ since each subspace is indexed by an element of $[n]$). We then define the triple multiset $T \subset {[n] \choose 3}$ to be the union (counting multiplicities) of all triples obtained this way (going over all special $2\ell$-dimensional spaces). 

\begin{claim}
	The triple multiset  $T \subset {[n] \choose 3}$ constructed above satisfies the following three conditions (counting multiplicities).
	\begin{itemize}
		\item If $\{i,j,k\} \in T$ then $V_k \subset V_i + V_j$.
		\item Each $i \in [n]$ appears in at least $3k$ triples in $T$.
		\item Every pair $i \neq j$ appears together in at most $6$ triples in $T$.
	\end{itemize}
\end{claim}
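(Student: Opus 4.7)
The plan is to verify all three bullet points directly from the construction and from Lemma~\ref{lem-steiner}, using the hypothesis $V_i \cap V_{i'} = \{\vec 0\}$ as the key structural input that forces special subspaces to be ``uniquely determined'' by any two of the $V_i$'s they contain.

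For the first bullet, the observation is that whenever $\{i,j,k\} \in T$ the triple was produced from a special $2\ell$-dimensional subspace $W$ with $V_i, V_j, V_k \subset W$. Because $V_i \cap V_j = \{\vec 0\}$ and each has dimension $\ell$, the sum $V_i + V_j$ has dimension $2\ell$, so $V_i + V_j = W$, and hence $V_k \subset V_i + V_j$. I would state this as a one-line argument.

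For the second bullet, I would group the partners of $i$ by the special subspace they live in. Let $W_1, \ldots, W_s$ be the distinct special $2\ell$-dimensional subspaces containing $V_i$, and let $r_t \ge 3$ denote the number of $V_j$'s contained in $W_t$. By the argument above, for each $i_2 \ne i$ with $V_i + V_{i_2}$ special there is a unique $W_t$ equal to $V_i + V_{i_2}$, and conversely each $W_t$ accounts for exactly $r_t - 1$ such indices $i_2$. By the hypothesis of Theorem~\ref{thm-tightsg} we therefore have $\sum_t (r_t - 1) \ge k$. By part~(1) of Lemma~\ref{lem-steiner}, the multiset built from $W_t$ contains $3(r_t - 1)$ triples through $i$, so the total number of triples in $T$ through $i$ is at least $3 \sum_t (r_t - 1) \ge 3k$.

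For the third bullet, I would note that any triple in $T$ containing both $i$ and $j$ comes from some special subspace $W$ with $V_i, V_j \subset W$, and by the same dimension argument as in the first bullet we must have $W = V_i + V_j$. So there is at most one special subspace $W$ contributing any triples containing both $i$ and $j$, and by part~(2) of Lemma~\ref{lem-steiner} the multiset of triples built from that $W$ contains at most $6$ triples containing the pair $\{i, j\}$.

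The only place requiring care is making sure partners of $i$, and special subspaces containing a pair $(V_i, V_j)$, are not double-counted; both are handled uniformly by the observation that the trivial intersection hypothesis forces $V_i + V_{i'} = W$ whenever $V_i, V_{i'} \subset W$ and $\dim W = 2\ell$. I do not anticipate any real obstacle beyond bookkeeping.
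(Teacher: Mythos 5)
Your proposal is correct and follows essentially the same route as the paper's own proof: the trivial-intersection hypothesis forces $V_i+V_j$ to equal the unique special $2\ell$-dimensional subspace containing both, and the three bullets then follow by the same per-subspace counting via Lemma~\ref{lem-steiner}. Your write-up simply makes the uniqueness/no-double-counting step more explicit than the paper does.
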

\begin{proof}
	The first item is satisfied since we only take triples contained in a $2\ell$ dimensional space and every pair has trivial intersection (and so spans the entire $2\ell$-dimensional space). To prove the second item, fix some $i \in [n]$ and suppose $V_i$ is contained in $s$ special $2\ell$-dimensional spaces $W_1,\ldots,W_s$  such that $W_j$ contains $r_j \geq 3$ spaces among the $V_1,\ldots,V_n$ (including $V_i$). By the conditions of the theorem, we know that $\sum_{j=1}^s (r_i-1) \geq k$. Hence, using the bounds from Lemma~\ref{lem-steiner} $V_i$ (or actually $i$) will be in $\sum_{j=1}^s 3(r_i-1) \geq 3k$ triples in $T$. The last item follows from the fact that a particular pair $V_i,V_j$ can belong to at most one special $2\ell$-dimensional space and then using the bound on pairs from Lemma~\ref{lem-steiner}.
\end{proof}

We now construct the matrix $A_C \in \M_{m,n}(\ell,\ell)$ by adding to $A_C$ a specially constructed row (of $\ell \times \ell$ blocks) for each triple in $T$ (if a triple repeats more than once we  also repeat the corresponding row the same number of times). The construction of the row is given in the following claim.
\begin{claim}\label{cla-blockrow}
	Let $t = \{i_1,i_2,i_3\} \in T$, then there exists a row matrix $R^{(t)} \in \M_{1,n}(\ell,\ell)$ with the following properties.
	\begin{enumerate}
		\item For each $i \not\in \{i_1,i_2,i_3\}$, the $i$'th block in $R^{(t)}$ is zero.
		\item The three blocks of $R^{(t)}$ indexed by $i_1,i_2,i_3$ are non singular $\ell \times \ell$ matrices.
		\item The product $R^{(t)} \cdot A_V$ is zero (viewed as an $\ell \times d$ scalar matrix).
	\end{enumerate}
\end{claim}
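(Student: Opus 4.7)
The plan is to exploit the two features built into the construction of $T$: whenever $\{i_1,i_2,i_3\}\in T$ the three spaces sit inside a common $2\ell$-dimensional space of the form $V_{i_1}+V_{i_2}$, and by hypothesis all pairwise intersections $V_i\cap V_j$ are trivial. Together these will let me expand the basis of $V_{i_3}$ uniquely in terms of the bases of $V_{i_1}$ and $V_{i_2}$, and read the three non-singular blocks of $R^{(t)}$ directly off the coefficients.

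Concretely, I would let $B_i$ denote the $\ell\times d$ matrix whose rows are $v_{i1},\ldots,v_{i\ell}$, so that $A_V$ is the vertical concatenation of $B_1,\ldots,B_n$. Since $V_{i_1}\cap V_{i_2}=\{\vec{0}\}$ and $\dim(V_{i_1}+V_{i_2})=2\ell$, the union of the rows of $B_{i_1}$ and $B_{i_2}$ is a basis of $V_{i_1}+V_{i_2}$. The construction of $T$ guarantees $V_{i_3}\subset V_{i_1}+V_{i_2}$, so there exist unique $\ell\times\ell$ matrices $\alpha$ and $\beta$ with
$$B_{i_3}=\alpha B_{i_1}+\beta B_{i_2}.$$
Now define $R^{(t)}$ to be the block row whose $i_1,i_2,i_3$ entries are $\alpha$, $\beta$, $-I_\ell$, and whose remaining blocks are zero. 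Condition (1) is immediate from the construction, and condition (3) is the displayed identity rewritten as $R^{(t)}\cdot A_V=\alpha B_{i_1}+\beta B_{i_2}-B_{i_3}=0$.

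The only real content remaining — and what I expect to be the sole obstacle, though a mild one — is to verify that $\alpha$ and $\beta$ are non-singular (the $-I_\ell$ block is of course fine). Suppose a nonzero vector $w$ satisfies $w^{\top}\alpha=0$. Then
$$w^{\top}B_{i_3}=w^{\top}\alpha B_{i_1}+w^{\top}\beta B_{i_2}=w^{\top}\beta B_{i_2},$$
placing the row vector $w^{\top}B_{i_3}$ simultaneously in $V_{i_3}$ and in $V_{i_2}$. The trivial-intersection hypothesis $V_{i_2}\cap V_{i_3}=\{\vec{0}\}$ then forces $w^{\top}B_{i_3}=0$, contradicting linear independence of the rows of $B_{i_3}$. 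A symmetric argument with the roles of $i_1$ and $i_2$ swapped handles $\beta$. The whole claim therefore reduces to unique expansion in a basis plus the disjointness hypothesis, and no further machinery is needed.
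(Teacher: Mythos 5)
Your proposal is correct and follows essentially the same route as the paper: both expand one of the three basis matrices in terms of the other two inside the common $2\ell$-dimensional space (the paper writes $B_{i_1}=C_2B_{i_2}+C_3B_{i_3}$, you write $B_{i_3}=\alpha B_{i_1}+\beta B_{i_2}$, a harmless relabeling) and place an identity block and the negated coefficient blocks in the row. Your kernel-vector argument for non-singularity of $\alpha$ and $\beta$ is just a spelled-out version of the paper's one-line appeal to the trivial-intersection hypothesis.
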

\begin{proof}
	Since $V_{i_1},V_{i_2},V_{i_3}$ are all contained in a $2\ell$ dimensional space (spanned by any two of them), every basis element in one of the spaces, say in $V_{i_1}$, is spanned by the basis elements in the other two. Let $B_i$ denote the matrix whose rows are the elements of the basis of $V_i$. We can thus find $\ell \times \ell$ matrices $C_2,C_3$ so that $$ B_{i_1} = C_2 \cdot B_{i_2} + C_3 B_{i_3}.$$ Moreover, both matrices $C_2,C_3$ are non singular, since otherwise  $V_{i_1}$ would intersect one of the spaces $V_{i_2},V_{i_3}$ non-trivialy. Hence, we can take the row $R^{(t)}$ to have the identity $\ell \times \ell$ block in position $i_1$ and the non singular blocks $-C_2,-C_3$ in positions $i_2,i_3$ (with zeros everywhere else). By construction of $A_V$ we have that the product $R^{(t)} \cdot A_V$ is zero.
\end{proof}

We now take the matrix $A_C \in \M_{m,n}(\ell,\ell)$ to have the rows (in whatever order we wish) $R^{(t)}$ for all $t \in T$ (counting multiplicities). By the last claim we have that $A_C \cdot A_V = 0$.

\begin{claim}
	The matrix $A_C$ is a $(3,3k,6)$-design matrix.
\end{claim}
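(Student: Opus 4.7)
The plan is to verify each of the three conditions in the definition of a $(q,k,t)$-design matrix with $(q,k,t) = (3,3k,6)$, using the structural properties of the rows $R^{(t)}$ from Claim~\ref{cla-blockrow} together with the three properties of the multiset $T$ established in the claim just before it.

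First, the bound on non-zero blocks per row follows immediately from item (1) of Claim~\ref{cla-blockrow}: each row $R^{(t)}$ has non-zero blocks only at the three positions $i_1,i_2,i_3$ indexed by the triple $t$, so $q=3$ is correct.

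Next, the intersection-of-supports bound: the $i$-th and $j$-th columns of $A_C$ are both non-zero at the row corresponding to $t$ if and only if $\{i,j\} \subset t$. Since the multiset $T$ has the property that every pair $i \neq j$ appears in at most $6$ triples (item 3 of the claim preceding Claim~\ref{cla-blockrow}), the number of such rows is at most $6$, giving $t=6$.

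Finally, for the well-spread condition on columns, fix any $i \in [n]$. By item (2) of the claim preceding Claim~\ref{cla-blockrow}, the index $i$ appears in at least $3k$ triples of $T$ (counted with multiplicity), so the $i$-th column of $A_C$ contains at least $3k$ non-zero blocks. By item (2) of Claim~\ref{cla-blockrow}, every such block is a non-singular $\ell \times \ell$ matrix. Now invoke Comment~\ref{com-nonsingular}: when $r=c$, the well-spread condition on a collection of blocks is equivalent to each of them being non-singular (indeed, for non-singular $B \in \M_{\ell,\ell}(1,1)$ one has $\dim(B(V))=\dim(V)$ for every subspace $V$, so $\sum_{i\in[s]} \dim(B_i(V)) = s\dim(V) = \tfrac{rs}{c}\dim(V)$). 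Thus any $3k$ of these non-singular blocks form a well-spread set, yielding the second condition with $k$ replaced by $3k$.

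The argument is entirely book-keeping once Claim~\ref{cla-blockrow} and the properties of $T$ are in hand; there is no real obstacle. The one place to be careful is to explicitly note that the equality $r=c=\ell$ lets us use the simple non-singularity characterization of well-spread (Comment~\ref{com-nonsingular}) rather than having to verify the dimension inequality for every subspace by hand.
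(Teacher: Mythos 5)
Your proposal is correct and follows the paper's proof essentially verbatim: the row bound and pairwise-support bound come directly from the structure of the rows $R^{(t)}$ and the at-most-$6$-triples-per-pair property of $T$, and the well-spread condition is reduced (via Comment~\ref{com-nonsingular}, using $r=c=\ell$ and non-singularity of the blocks from Claim~\ref{cla-blockrow}) to counting at least $3k$ non-zero blocks per column. The only difference is that you spell out why non-singularity of square blocks gives well-spreadness, which the paper leaves to the cited comment.
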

\begin{proof}
	First notice that, by construction, each row of $A_C$ has at most three non zero blocks. By properties of the triple system $T$, every pair of columns $i \neq j$ will have at most $6$ rows of $A_C$ in which both columns are non zero (since there are at most 6 triples in $T$ containing both $i$ and $j$). So we only need to show that each column contains at least $3k$ blocks that form a well spread (multi)set.	By Claim~\ref{cla-blockrow}, each non zero block in $A_C$ is non singular and so, by Comment~\ref{com-nonsingular} , it is enough to show that each column contains at least $3k$ non zero blocks. This follows from the properties of $T$ since each $i$ appears in at least $3k$ triples.
\end{proof}

We now  apply Theorem~\ref{thm-rank-design} to bound the rank of $A_C$:
\[ \rank(A_C) \geq \ell n - \frac{\ell n}{1 + k/4}. \]
Using the identity $A_C \cdot A_V = 0$ we conclude that 
\[ \rank(A_V) \leq \frac{4\ell n}{k+4} < \frac{4\ell}{\delta}. \]  Now, using the fact that the rank is an integer and that we have a strict inequality we can in fact bound the rank by $\lceil 4\ell/\delta \rceil - 1$. This concludes the proof of Theorem~\ref{thm-tightsg}. \qed

\section{Incidences between lines and curves}\label{sec-incidence}

In this section we use Theorem~\ref{thm-rank-design} to prove bounds on the incidence structure of arrangements of lines and curves in $\C^d$.  We begin by restating our theorem handling intersections of lines.

\begin{thm}\label{thm-lineincidence}
	Let $L_1,\ldots,L_n \subset \C^d$ be distinct lines such that each $L_i$ intersects at least $k$ other lines and, among those $k$ lines, at most $k/2$ have the same intersection point on $L_i$. Then, the $n$ lines are contained in an affine subspace of dimension at most $\left\lfloor \frac{4n}{k+2}\right\rfloor -1$.
\end{thm}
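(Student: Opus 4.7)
For each line $L_i$, choose two distinct points $v_{i0}, v_{i1} \in L_i$ and assemble the $2n \times (d+1)$ matrix $A_V$ whose rows are the lifted vectors $(1, v_{i0})$ and $(1, v_{i1})$ for $i = 1, \ldots, n$. A short check shows that the affine span of $\bigcup_i L_i$ has dimension exactly $\rank(A_V) - 1$, so it suffices to upper bound $\rank(A_V)$. The plan is to construct a block matrix $A_C \in \M_{m,n}(1,2)$ with $A_C \cdot A_V = 0$ and high rank, because then $\rank(A_V) \le 2n - \rank(A_C)$.

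\textbf{Construction of $A_C$.} For each pair $\{i,j\}$ such that $L_i$ and $L_j$ intersect at some point $p$, write $p = \alpha v_{i0} + \beta v_{i1}$ with $\alpha + \beta = 1$ and $p = \alpha' v_{j0} + \beta' v_{j1}$ with $\alpha' + \beta' = 1$, and add a row to $A_C$ with $1 \times 2$ blocks $(\alpha, \beta)$ in column $i$, $(-\alpha', -\beta')$ in column $j$, and zero blocks elsewhere. The identity $A_C \cdot A_V = 0$ then reduces to the two equalities $\alpha + \beta = \alpha' + \beta'$ and $\alpha v_{i0} + \beta v_{i1} = \alpha' v_{j0} + \beta' v_{j1}$, both of which hold by construction.

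\textbf{Design matrix verification and conclusion.} I claim $A_C$ is a $(2, k, 1)$-design matrix: each row has two nonzero blocks ($q = 2$), and two distinct lines meet in at most one point so two distinct columns share at most one common nonzero row ($t = 1$). The key step is the well-spread condition, and this is where the $1 \times 2$ block shape is essential. Two blocks $(\alpha, \beta)$ and $(\alpha', \beta')$ with coordinate sum one are proportional if and only if they are equal, so the nonzero blocks in column $i$ share a common kernel precisely when the corresponding intersection points on $L_i$ coincide. Hence the maximum multiplicity of a shared kernel among the $\ge k$ blocks of column $i$ equals the maximum multiplicity of an intersection point on $L_i$, which by hypothesis is at most $k/2$. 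Any choice of $k$ such blocks then satisfies Definition~\ref{def-wellspread}, as one checks directly at $V = \{0\}$, at $V = \C^2$, and at $1$-dimensional $V$ (where the required inequality $\sum_j \dim(A_j(V)) \ge k/2$ reduces exactly to the multiplicity bound). Theorem~\ref{thm-rank-design} with $r = 1$, $c = 2$, $q = 2$, $t = 1$ then yields $X = k/2$ and $\rank(A_C) \ge 2n - 4n/(k+2)$; combining with $A_C A_V = 0$ and the integrality of rank gives $\rank(A_V) \le \lfloor 4n/(k+2) \rfloor$, so the affine dimension is at most $\lfloor 4n/(k+2) \rfloor - 1$. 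The main conceptual obstacle in this plan is the well-spread verification, resolved by the observation above identifying intersection-point multiplicities on $L_i$ with kernel multiplicities of the affine $1 \times 2$ coefficient blocks.
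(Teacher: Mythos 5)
Your proof is correct and follows essentially the same route as the paper: the paper also builds an annihilating matrix $A_C \in \M_{m,n}(1,2)$ with one row per intersecting pair, verifies it is a $(2,k,1)$-design (using exactly the observation that at most $k/2$ blocks in a column can share a kernel because at most $k/2$ intersection points coincide), and applies Theorem~\ref{thm-rank-design} with $X = k/2$ to get $\rank(A_V) \le \lfloor 4n/(k+2)\rfloor$. The only cosmetic differences are that you fold the affine-to-linear lift $(1,v)$ directly into the argument instead of passing through the equivalent homogeneous statement about two-dimensional subspaces, and that your barycentric blocks carry coordinate sums $\pm 1$ (the block entered in the second column has sum $-1$), so the ``proportional iff equal'' remark should be stated for sums in $\{1,-1\}$ --- a trivial adjustment that does not affect the kernel-multiplicity argument.
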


This theorem can be equivalently stated as the following statement about two dimensional subspaces. 

\begin{thm}\label{thm-homincidence}
	Let $V_1,\ldots,V_n \subset \C^d$ be distinct two dimensional subspaces such that each $V_i$ non-trivially intersects at least $k$ other $V_j$'s and, among those $k$ subspaces, at most $k/2$ have the same intersection with  $V_i$. Then $$ \dim(V_1 + \cdots + V_n) \leq \left\lfloor \frac{4n}{k+2}\right\rfloor.$$
\end{thm}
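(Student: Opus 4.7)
The plan is to mirror the proof of Theorem~\ref{thm-tightsg} from Section~\ref{sec-highsg}, building a block design matrix that annihilates a basis matrix of the $V_i$'s. The gain compared to the $\ell$-dimensional case will come from using $1\times 2$ blocks (reflecting the fact that $\dim V_i = 2$), which lets us take $t=1$ and bypass the Steiner-type triple system of Lemma~\ref{lem-steiner} entirely.

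First, fix a basis $\{v_{i1},v_{i2}\}$ of each $V_i$ and let $A_V\in\M_{2n,d}(1,1)$ be the tall matrix whose rows are these basis vectors listed in order. Then $\rank(A_V)=\dim(V_1+\cdots+V_n)$. For every unordered pair $i\neq j$ with $V_i\cap V_j\neq\{0\}$, fix a nonzero $w\in V_i\cap V_j$ and write it in both bases as $w=a_1 v_{i1}+a_2 v_{i2}=b_1 v_{j1}+b_2 v_{j2}$. Add a row to a new matrix $A_C\in\M_{m,n}(1,2)$ containing the $1\times 2$ block $[a_1,a_2]$ in column $i$, the block $[-b_1,-b_2]$ in column $j$, and zero blocks elsewhere. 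By construction $A_C\cdot A_V=0$, so $\rank(A_V)\leq 2n-\rank(A_C)$, and it suffices to lower bound $\rank(A_C)$.

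Next, I verify that $A_C$ is a $(2,k,1)$-design matrix with $r=1$, $c=2$. Each row has exactly $q=2$ non-zero blocks by construction, and any two columns $i\neq j$ share at most $t=1$ row (the one built from the pair $(i,j)$, if present). The crucial step is the well-spread condition on each column. For a nonzero $1\times 2$ row $A$, the image $A(U)$ of a $1$-dimensional $U\subset\C^2$ vanishes iff $U=\ker A$; two blocks $[a_1,a_2]$ and $[a_1',a_2']$ in column $i$ (coming from $V_j$ and $V_{j'}$) have the same kernel iff $a_1 v_{i1}+a_2 v_{i2}$ and $a_1' v_{i1}+a_2' v_{i2}$ are proportional, i.e., iff $V_i\cap V_j=V_i\cap V_{j'}$ as lines. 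The hypothesis then gives $k$ blocks in column $i$ such that no kernel is shared by more than $k/2$ of them, yielding $\sum_{\ell}\dim(A_\ell(U))\geq k/2=\tfrac{rk}{c}\dim(U)$ for every $1$-dimensional $U$; the cases $\dim U=0$ and $\dim U=2$ are immediate (each block is surjective onto $\C$).

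Finally, Theorem~\ref{thm-rank-design} with $q=2$, $t=1$, $r=1$, $c=2$ gives $X=\tfrac{kr}{ct(q-1)}=k/2$ and therefore $\rank(A_C)\geq 2n-\tfrac{2n}{1+k/2}=2n-\tfrac{4n}{k+2}$. Combined with the annihilation identity, $\rank(A_V)\leq\tfrac{4n}{k+2}$, and since the rank is an integer we conclude $\dim(V_1+\cdots+V_n)\leq\lfloor 4n/(k+2)\rfloor$. The only non-routine step is the well-spread verification, which is precisely where the ``at most $k/2$ share a common intersection'' hypothesis is used; everything else is a direct block-matrix transcription of the scalar Sylvester--Gallai argument.
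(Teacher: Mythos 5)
Your proposal is correct and follows essentially the same route as the paper: the same rank-nullity setup $A_C\cdot A_V=0$ with $1\times 2$ blocks built from the pairwise intersections, the same verification that $A_C$ is a $(2,k,1)$-design matrix (the ``at most $k/2$ share an intersection'' hypothesis giving the well-spread condition on each column), and the same application of Theorem~\ref{thm-rank-design} with $X=k/2$ followed by taking the floor.
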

\begin{proof}[Proof of equivalence of Theorem~\ref{thm-lineincidence} and Theorem~\ref{thm-homincidence}]

Suppose Theorem~\ref{thm-lineincidence} holds and proceed to prove Theorem~\ref{thm-homincidence} as follows. Let $H$ be a generic affine hyperplane (not passing through the origin) and let $L_i = V_i \cap H$ be the set of $n$ lines obtained by intersecting each $V_i$ with $H$. Clearly, the incidence structure remains the same and so we can apply Theorem~\ref{thm-lineincidence} to claim that the lines $L_1,\ldots,L_n$ are contained in an affine subspace (inside $H$) of dimension at most  $\left\lfloor \frac{4n}{k+2}\right\rfloor - 1$. This results in a dimension bound of $\left\lfloor \frac{4n}{k+2}\right\rfloor$ on the $V_i$'s since we add back the origin.
	
	In the opposite direction, suppose Theorem~\ref{thm-homincidence} holds and proceed to prove Theorem~\ref{thm-lineincidence} as follows. Let $L_1,\ldots,L_n \subset \C^d$ be lines as in the theorem. Embed $\C^d$ into $\C^{d+1}$ as the hyperplane $x_{d+1}=1$. Each line $L_i$ defines a two dimensional subspace in $\C^{d+1}$ by taking its linear span. If the lines $L_i$ span a $d'$-dimensional affine subspace in $\C^d$ then the resulting arrangement of two dimensional spaces in $\C^{d+1}$ spans a $d'+1$ dimensional linear subspace. Again, the incidence structure stays the same and so we can apply Theorem~\ref{thm-homincidence} and subtract one from the resulting dimension bound.
\end{proof}

\subsection{Proof of Theorem~\ref{thm-homincidence}}

The overall proof structure is similar to the proof of Theorem~\ref{thm-tightsg}. We pick a basis $\{u_i,v_i\} \in \C^d$ for each $V_i$ and consider the $2n \times d$ (scalar) matrix $A_V$ whose rows are $u_1,v_1,u_2,v_2,\ldots,u_n,v_n$. To upper bound the rank of $A_V$ we will construct a matrix $A_C \in \M_{m,n}(1,2)$ of high rank such that $A_C \cdot A_V = 0$. As before, each row of $A_C$ will come from some dependency (in this case pair-wise intersection) among the spaces $V_1,\ldots,V_n$. More specifically, for every pair $V_i,V_j$ with non trivial intersection we add a row $R \in \M_{1,n}(1,2)$ to $A_C$ (rows can be added in whatever order we wish), where $R$ is constructed as follows. Let $a_1,b_1,a_2,b_2 \in \C$ be such that $a_1u_i + b_1v_i + a_2u_j + b_2v_j = 0$ and with $|a_1| + |b_1| \neq 0$ and $|a_2| + |b_2| \neq 0$ (such coefficients exist since there is non trivial intersection). We take the row $R$ to have the block $(a_1,b_1)$ in position $i$ and the block $(a_2,b_2)$ in position $j$, with zeros everywhere else. By construction we have $R \cdot A_V = 0$ and so we end up with $A_C \cdot A_V = 0$ as well. 

\begin{claim}
	The matrix $A_C$ constructed above is a $(2,k,1)$-design matrix.
\end{claim}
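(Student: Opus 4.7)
The plan is to verify the three conditions in the definition of a $(2,k,1)$-design matrix for the matrix $A_C$ constructed above. The first two conditions are immediate from the construction; the interesting content is that the well-spread condition in each column translates exactly into the hypothesis about distinct intersection points.

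First, I would check the row sparsity and pairwise column overlap. By construction, for each ordered pair $\{i,j\}$ with $V_i \cap V_j \neq \{0\}$ we add exactly one row to $A_C$, and that row has non-zero blocks only in positions $i$ and $j$; hence every row has exactly two non-zero blocks, giving $q=2$. Moreover, if two distinct columns $i \neq j$ have both positions non-zero in some row, that row must be the unique one attached to the pair $\{i,j\}$, so the supports of any two columns intersect in at most one position, giving $t=1$.

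The main step is the column condition. For each $i$, the number of non-zero blocks in column $i$ equals the number of $V_j$'s that intersect $V_i$ non-trivially, which is at least $k$ by hypothesis. I now need to identify the kernels of these $1 \times 2$ blocks. The block of the row for the pair $\{i,j\}$ sitting in column $i$ is $(a_1, b_1)$, where $a_1, b_1 \in \C$ are chosen so that $a_1 u_i + b_1 v_i + a_2 u_j + b_2 v_j = 0$ for some $a_2, b_2$; hence $a_1 u_i + b_1 v_i \in V_i \cap V_j$ and is non-zero. Via the basis $\{u_i, v_i\}$, the 1-dimensional kernel of $(a_1, b_1) \in \M_{1,2}(1,1)$ corresponds exactly to the intersection line $V_i \cap V_j$ inside $V_i$. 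Consequently, two non-zero blocks in column $i$ have the same kernel if and only if the two corresponding $V_j$'s have the same intersection with $V_i$. By the hypothesis of the theorem, among the intersecting $V_j$'s at most $k/2$ share any given intersection line with $V_i$, so among the blocks in column $i$ at most $k/2$ share any given kernel.

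It remains to pick $k$ such blocks and verify the well-spread definition with $r = 1$, $c = 2$, and $s = k$. There are three cases for the subspace $V \subset \C^2$: for $V = \{0\}$ the inequality is trivial; for $V = \C^2$, each block is non-zero so $\dim(A_i(V)) = 1$ for all $i$, giving the sum $k \geq \tfrac{k}{2}\cdot 2$; for a 1-dimensional $V$, we have $\dim(A_i(V)) = 0$ precisely when $V = \ker(A_i)$, and at most $k/2$ of the chosen blocks satisfy this, so the sum is at least $k/2 = \tfrac{k}{2}\cdot 1$. This verifies the well-spread condition and completes the proof. The one place requiring care is the kernel-intersection correspondence in the last paragraph, but once the defining relation $a_1 u_i + b_1 v_i = -a_2 u_j - b_2 v_j$ is read correctly it falls out immediately.
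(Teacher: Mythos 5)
Your proof is correct and takes essentially the same route as the paper: $q=2$ and $t=1$ are read off the construction, and the column condition is verified by noting that at most $k/2$ blocks in column $i$ can share a kernel (equivalently, be proportional), which gives the well-spread inequality after the case split on $\dim V\in\{0,1,2\}$. One small wording fix: under the basis $\{u_i,v_i\}$ the kernel of $(a_1,b_1)$ is not literally the intersection line (that line is spanned by $a_1u_i+b_1v_i$, while the kernel is the line annihilated by $(a_1,b_1)$); what is true, and what your argument actually uses, is that two blocks have the same kernel iff they are proportional iff the corresponding lines $V_i\cap V_j$ coincide.
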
 
\begin{proof}
	Clearly every row has at most two non zero blocks and a pair of columns can have at most one row in which both are non zero (the row corresponding to their intersection, if one exists). So we only need to show that each column has $k$ blocks forming a well spread set. Fix some column $i$ and let $(a_1,b_1),\ldots,(a_k,b_k)$ be the $k$ blocks in the $i$'th column appearing in rows corresponding to the intersections of $V_i$ with $k$ subspaces $V_{j_1},\ldots,V_{j_k}$ of which at most $k/2$ have the same intersection with $V_i$. This last condition implies that, of the $k$ row vectors $(a_1,b_1),\ldots,(a_k,b_k)$, at most $k/2$ are pairwise linearly dependent. This implies that they satisfy the definition of well-spread blocks. Indeed, since the blocks are $1 \times 2$, we only need to consider one dimensional subspaces $U \subset \C^2$ in the definition of well-spread. For such a subspace, the linear map $\phi_i$ from $\C^2$ to $\C^1$ defined by a block $(a_i,b_i)$ will have a one dimensional image on $U$ if and only if $(a_i,b_i)$ is not in the orthogonal complement of $U$. Since at most $k/2$ of the $(a_i,b_i)$ can be in $U^\perp$ we get that  
	$$ \sum_{i\in [k]} \dim(\phi_i(U)) \geq \frac{k}{2} = \frac{k}{2}\dim(U), $$ as required.
\end{proof}

Applying Theorem~\ref{thm-rank-design} on $A_C$ we get that $\rank(A_C) \geq 2n - \frac{2n}{1 + k/2}.$ Hence, $ \rank(A_V) \leq \frac{4n}{k+2}. $ Since the rank is integer we get $$\rank(A_V) = \dim\left(\sum_{i\in [n]}V_i\right) \leq \left\lfloor \frac{4n}{k+2}\right\rfloor. $$ This completes the proof of the theorem. \qed

\subsection{Generalizing to curves}

Here we extend Theorem~\ref{thm-lineincidence} to handle curves of higher degree. For our methods to work we must require that the curves are given in parametric form as the image of a low degree polynomial map. 

\begin{define}
We say that $\gamma \subset \C^d$ is a degree $r$ parametric curve if there exists $d$ polynomials $\gamma_1,\ldots,\gamma_d \in \C[t]$ of degree at most $r$ each such that $$\gamma = \{ (\gamma_1(t),\ldots,\gamma_d(t)) \,|\, t \in \C \}$$ and at least one of the $\gamma_i$'s is a non constant polynomial.
\end{define}

It is easy to see that a parametric degree $r$ curve as defined above also has degree at most $r$ under the usual algebraic geometry definition of degree (intersecting it with a generic hyperplane, we get at most $r$ intersection points). A parametric curve as defined above is also an irreducible curve as it is the image of an irreducible curve under a polynomial map. Combining these two facts, and using Bezout's theorem (see e.g., \cite{Hartshorne77}) we can deduce the following.

\begin{claim}\label{cla-bezout}
	Let $\gamma \neq \gamma'$ be two degree $r$ parametric curves. Then $$ | \gamma \cap \gamma'| \leq r^2.$$
\end{claim}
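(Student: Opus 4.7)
The plan is to reduce to the planar case by a generic linear projection and then invoke Bezout's theorem for plane curves, as the claim already suggests.

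First I would verify the two properties of a parametric degree $r$ curve $\gamma \subset \C^d$ that are stated (or essentially stated) in the preceding paragraph: namely, $\gamma$ is an irreducible algebraic curve of degree at most $r$. Irreducibility is immediate because $\gamma$ is the image of $\A^1$ under the polynomial map $t \mapsto (\gamma_1(t),\ldots,\gamma_d(t))$, and the image of an irreducible variety under a morphism is irreducible. For the degree bound, intersecting $\gamma$ with a generic affine hyperplane $\{a_0 + a_1 x_1 + \cdots + a_d x_d = 0\}$ reduces to counting the zeros in $t$ of the polynomial $a_0 + \sum_{i=1}^d a_i \gamma_i(t)$, which has degree at most $r$; so $\gamma$ meets a generic hyperplane in at most $r$ points, giving $\deg(\gamma) \le r$.

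Next I would pick a generic linear projection $\pi : \C^d \to \C^2$ and look at the plane curves $\pi(\gamma), \pi(\gamma')$. For a generic $\pi$ three things hold simultaneously, each being a Zariski-open condition on the Grassmannian of projections: (i) the restriction of $\pi$ to $\gamma \cup \gamma'$ is injective (this uses that the secant variety of a one-dimensional variety in $\C^d$ has dimension at most $3 < d$, assuming $d \ge 3$; the case $d = 2$ is already the planar case and requires no projection), (ii) $\pi(\gamma)$ and $\pi(\gamma')$ still have degree at most $r$ (a generic projection does not drop the degree of an irreducible curve), and (iii) $\pi(\gamma) \neq \pi(\gamma')$ (since $\gamma \neq \gamma'$, there is a point of $\gamma' \setminus \gamma$, and a generic projection will not collapse it onto $\pi(\gamma)$).

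Finally I would apply Bezout's theorem to the two distinct irreducible plane curves $\pi(\gamma)$ and $\pi(\gamma')$, each of degree at most $r$. Since they share no common component, they meet in at most $r \cdot r = r^2$ points (counted even with multiplicity). Because $\pi$ is injective on $\gamma \cup \gamma'$, we get
\[
|\gamma \cap \gamma'| \;\le\; |\pi(\gamma) \cap \pi(\gamma')| \;\le\; r^2,
\]
which is the desired bound.

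The main obstacle is the combined genericity argument in the second paragraph: one has to be a little careful that the conditions ``$\pi$ injective on $\gamma \cup \gamma'$,'' ``$\pi$ preserves the degree of each curve,'' and ``$\pi(\gamma) \neq \pi(\gamma')$'' can all be arranged by a single choice of $\pi$. Each condition individually cuts out a nonempty Zariski-open subset of the space of linear projections $\C^d \to \C^2$, so their intersection is nonempty; this is the only nontrivial checking in the proof, and it can also be avoided by working on the compactification in $\P^d$ and projecting from a generic linear subspace of the right codimension.
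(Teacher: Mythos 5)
Your overall route (irreducibility of a parametric curve, degree at most $r$, then reduce to plane Bezout) is the same one the paper sketches, but your step (i) is false as stated, and the justification given for it does not work. A generic linear projection $\pi:\C^d\to\C^2$ is in general \emph{not} injective on a non-planar curve: for the twisted cubic $\gamma=\{(t,t^2,t^3)\}\subset\C^3$, the generic plane projection is a nodal cubic, so two distinct points of $\gamma$ are identified; injectivity only holds for the special (non-generic) projections producing a cusp. The dimension count you invoke is also off: for $d=3$ the bound ``secant variety has dimension at most $3<d$'' is simply false, and for general $d$ the relevant comparison is between the $(d-3)$-dimensional (projectivized) center of a projection to the plane and the (up to) $3$-dimensional secant variety, and $(d-3)+3\ge d$, so a generic center \emph{does} meet the secant variety. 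Equivalently, the secant \emph{directions} form a surface in $\P^{d-1}$ while the projectivized kernel is a $\P^{d-3}$, and $(d-3)+2\ge d-1$, so they always intersect. So the condition ``$\pi$ injective on $\gamma\cup\gamma'$'' is not a nonempty open condition you can add to the others, and the final inequality as you derive it collapses.

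The gap is repairable, because you are asking for far more injectivity than you need. Since $\gamma\neq\gamma'$ are irreducible curves, $\gamma\cap\gamma'$ is a proper closed subset of $\gamma$, hence finite (or, if you prefer, run the argument for an arbitrary finite subset $S\subseteq\gamma\cap\gamma'$), and a generic projection is injective on any fixed finite set. Moreover $\pi(\gamma)$ and $\pi(\gamma')$ are automatically parametric plane curves of degree at most $r$ (compose the given parametrizations with $\pi$; non-constancy holds generically), so your condition (ii) needs no separate genericity claim, and your argument for (iii) is fine. With injectivity required only on $\gamma\cap\gamma'$, the chain $|\gamma\cap\gamma'|=|\pi(\gamma\cap\gamma')|\le|\pi(\gamma)\cap\pi(\gamma')|\le r^2$ goes through by Bezout for distinct irreducible plane curves. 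This corrected projection argument is a legitimate way to fill in the details the paper leaves implicit (the paper only records irreducibility and the degree bound and then cites Bezout), but as written your proof hinges on a genericity statement that fails already in $\C^3$.
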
 

We now restate our theorem for curve arrangements.

\begin{thm}\label{thm-curveincidence}
	 Let $\gamma_1,\ldots,\gamma_n \subset \C^d$ be degree $r$ parametric curves such that each $\gamma_i$ intersects at least $k$ other curves and, among those $k$ curves, at most $k/2r$ have the same intersection point on $\gamma_i$. Then, the $n$ curves are contained in a subspace of dimension at most $ \frac{2(r+1)^4n}{k}$.
\end{thm}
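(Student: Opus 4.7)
The plan is to adapt the proof of Theorem~\ref{thm-homincidence}, replacing each two-dimensional subspace with the $(r+1)$-dimensional subspace naturally attached to a degree-$r$ parametric curve. Writing $\gamma_i(t) = \sum_{\ell=0}^{r} a_\ell^{(i)} t^{\ell}$, set $V_i = \spn\{a_0^{(i)},\ldots,a_r^{(i)}\} \subset \C^d$. Since $\gamma_i \subset V_i$, it suffices to bound $\dim(V_1+\cdots+V_n)$, which equals the rank of the $(r+1)n \times d$ matrix $A_V$ obtained by stacking the $a_\ell^{(i)}$'s as rows. As in the line case, the goal becomes to exhibit a block matrix $A_C \in \M_{m,n}(1, r+1)$ of high rank satisfying $A_C \cdot A_V = 0$.

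For every intersection point $p = \gamma_i(s) = \gamma_j(t)$, the identity $\sum_\ell s^\ell a_\ell^{(i)} - \sum_\ell t^\ell a_\ell^{(j)} = 0$ is a linear relation among the rows of $A_V$. I encode it as a row of $A_C$ whose $i$-th block is the Vandermonde row vector $(1,s,s^2,\ldots,s^r)$, whose $j$-th block is $-(1,t,t^2,\ldots,t^r)$, and which is zero elsewhere. Including one such row per intersecting pair of curves produces an $A_C$ with exactly two non-zero blocks per row; furthermore, by Claim~\ref{cla-bezout}, two distinct parametric curves meet in at most $r^2$ points, so the supports of any two columns of $A_C$ overlap in at most $r^2$ positions.

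The key step, and the main obstacle, is to verify that the (at least) $k$ blocks in each column $i$ form a well-spread set in the sense of Definition~\ref{def-wellspread}. Fix an $\ell$-dimensional subspace $U \subset \C^{r+1}$; the Vandermonde block $(1,s,\ldots,s^r)$ lies in $U^\perp$ exactly when $s$ is a common root of every polynomial $p_u(x) = \sum_i u_i x^i$ with $u \in U$. These polynomials span an $\ell$-dimensional subspace of $\C[x]_{\leq r}$, and since evaluation at distinct points gives linearly independent functionals on $\C[x]_{\leq r}$, the set of common roots has at most $r+1-\ell$ elements. Combined with the hypothesis that at most $k/(2r)$ of the $k$ intersecting curves meet $\gamma_i$ at any single point, the number of column-$i$ blocks landing in $U^\perp$ is at most $(r+1-\ell)\cdot k/(2r)$. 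A direct arithmetic check shows $(r+1-\ell)\,k/(2r) \leq k(r+1-\ell)/(r+1)$ for every $r \geq 1$, which is precisely the well-spread inequality $\sum_j \dim(A_{ji}(U)) \geq \frac{k}{r+1}\dim(U)$.

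Thus $A_C$ is a $(2,k,r^2)$-design matrix with $1 \times (r+1)$ blocks, and Theorem~\ref{thm-rank-design} gives $\rank(A_C) \geq (r+1)n - (r+1)n/(1+X)$ with $X = k/((r+1)r^2)$. Using $A_C \cdot A_V = 0$, this yields $\rank(A_V) \leq (r+1)n/(1+X) \leq (r+1)^2 r^2 n/k$, which is bounded by $2(r+1)^4 n/k$ since $r^2 \leq 2(r+1)^2$, completing the proof.
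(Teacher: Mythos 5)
Your proposal is correct, and it follows the paper's strategy almost step for step: the same coefficient matrix $A_V$ (the paper's $\Gamma$), the same Vandermonde-block matrix $A_C$ built from one linear relation per intersection, the same Bezout bound $r^2$ on column-support overlaps, and the same application of Theorem~\ref{thm-rank-design} followed by $\rank(A_V)\le (r+1)n/(1+X)$. The one place you genuinely diverge is the verification of the well-spread condition, which is indeed the crux. The paper argues greedily: it repeatedly extracts $r+1$ blocks with distinct parameter values (each such group is a Vandermonde basis of $\C^{r+1}$), stops when the remaining intersections are concentrated on at most $r$ parameter values, discards those at most $(k/2r)\cdot r = k/2$ blocks, and uses that a union of bases is well-spread; this yields a $(2,k',r^2)$-design with $k'\ge k/2$ only. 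You instead keep all $k$ blocks and argue directly: an $\ell$-dimensional $U\subset\C^{r+1}$ corresponds to an $\ell$-dimensional space of polynomials of degree at most $r$, whose common roots number at most $r+1-\ell$, and by hypothesis each such root accounts for at most $k/(2r)$ blocks, so at most $(r+1-\ell)k/(2r)\le k(r+1-\ell)/(r+1)$ blocks annihilate $U$ — exactly the well-spread inequality since $1/(2r)\le 1/(r+1)$ for $r\ge 1$. This is a correct and arguably cleaner substitute for the greedy step, and it avoids the paper's factor-2 loss: you get a $(2,k,r^2)$-design, hence $X=k/((r+1)r^2)$ rather than $k/(2(r+1)r^2)$, and the slightly stronger intermediate bound $(r+1)^2r^2n/k$ before relaxing to the stated $2(r+1)^4n/k$.
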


\begin{proof}[Proof of Theorem~\ref{thm-curveincidence}]
We take the same general steps appearing in the proof of Theorem~\ref{thm-lineincidence}. First, for each curve $\gamma_i$, let $v_{i0},\ldots,v_{ir} \in \C^d$ be such that $$ \gamma_i = \left\{ \sum_{j=0}^r v_{ij} \cdot t^j \,:\, t\in \C \right\}. $$ In other words, $v_{ij}$ contains the coefficients of $t^j$ in the $d$ polynomials defining $\gamma$. Clearly, upper bounding the dimension of the span of the $v_{ij}$'s (over all $i$ and $j$) will give an upper bound for the dimension of the smallest subspace containing all of the curves. For that purpose, let $\Gamma$ be the $n(r+1) \times d$ matrix whose first $r+1$ rows are $v_{10},\ldots,v_{1r}$, second $r+1$ rows are $v_{20},\ldots,v_{2r}$ etc.  

We will now use the incidences between the curves to  construct a design matrix $A \in \M_{m,n}(1,r+1)$ so that $A\cdot V = 0$. Each intersection between a pair of curves will give one row in $A$ as follows. Suppose $\gamma_i$ intersects $\gamma_{i'}$ for some $i$ and $i'$. Let $t,t' \in \C$ be such that
$$ \sum_{j=0}^r v_{ij} \cdot t^j = \sum_{j=0}^r v_{i'j} \cdot (t')^j. $$
Then, we can add a row $R$ to the matrix $A$ such that the $i$'th block of $R$ is $(1,t,t^2,\ldots,t^r)$, the $(i')$'th block of $R$ is $(1,t',\ldots,(t')^r)$ and all other blocks are zero. By construction we have that $R \cdot \Gamma = 0$ and so, we will end up with a matrix $A$ such that $A \cdot \Gamma = 0$. 

All is left is to argue that $A$ is a design matrix. 

\begin{claim}
 The matrix $A \in \M_{m,n}(1,r+1)$ constructed above is a $(2,k',r^2)$-design matrix with $k' \geq k/2$.
\end{claim}
\begin{proof}
	By construction, each row of $A$ contains at most $2$ non-zero blocks. By Bezout's theorem (Claim~\ref{cla-bezout}), two curves can intersect in at most $r^2$ points and so two columns of $A$ can have at most $r^2$ non zero common indices. To complete the proof we need to show that each columns of $A$ contains at least $k/2$ blocks forming a well spread (multi)set. Fix some column $i$, and notice that there are at least $k$ non-zero blocks in that column, each corresponding to an intersection of $\gamma_i$ with some other curve. Let $t_1,\ldots,t_k \in \C$ be such that the $k$ non-zero blocks in the $i$'th column are given by $(1,t_j,\ldots,t_j^r)$ with $j=1\ldots k$. Some of the $t_i$'s could be the same (if a single point on $\gamma_i$ is the intersection point with more than one curve). Notice that, by Vandermonde's theorem, if we take $r+1$ distinct values of $t_i$ then the corresponding blocks (treated as row vectors in $\C^{r+1}$ are linearly independent and thus form a basis of $\C^{r+1}$. Our strategy for picking a large well-spread set among these $k$ block is as follows: We will greedily pick $r+1$ blocks corresponding to $r+1$ {\em distinct} intersection points and add them to our set. As long as we can find $r+1$ distinct intersections we continue. If we can't find such a set, it means that all the remaining intersection points on $\gamma_i$ are concentrated in at most $r$ points. Since each point can intersect at most $k/2r$ curves from the original $k$ (per the conditions of the theorem), there could be at most $(k/2r)\cdot r = k/2$ points left. This means that we managed to construct a (multi)set of  $k' \geq k/2$ blocks in a way that there is a partition of them into $k'/(r+1)$ linearly independent sets, each of size $r+1$. It is now easy to see that such a set is well-spread since a subspace $V \subset \C^{r+1}$ of dimension $\ell$ can contain at most $\frac{k'\ell}{r+1}$ of the $k'$ blocks (at most $\ell$ from each of the linearly independent sets in the partition). 
\end{proof}

To finish the proof we apply Theorem~\ref{thm-rank-design} to conclude that
$$ \rank(A) \geq (r+1)n - \frac{(r+1)n}{1 + \frac{k}{2(r+1)r^2}}. $$
This implies that 
$$ \rank(\Gamma) \leq \frac{(r+1)n}{1 + \frac{k}{2(r+1)r^2}} \leq \frac{2(r+1)^4n}{k}. $$ This completes the proof.
\end{proof}

%
%
%
%
%


\bibliographystyle{amsplain}


\begin{dajauthors}
\begin{authorinfo}[dvir]
  Zeev Dvir\\
  Princeton University\\
  Princeton, NJ, USA\\
  zdvir\imageat{}princeton\imagedot{}edu \\
  \url{http://www.cs.princeton.edu/~zdvir/}
\end{authorinfo}
\begin{authorinfo}[garg]
  Ankit Garg\\
  Microsoft Research New England\\
  Cambridge, MA, USA\\
  garga\imageat{}microsoft\imagedot{}com \\
  \url{https://www.microsoft.com/en-us/research/people/garga/}
\end{authorinfo}
\begin{authorinfo}[oliveira]
  Rafael Oliveira \\
  University of Toronto\\
  Toronto, Canada\\
  rafael\imageat{}cs\imagedot{}toronto\imagedot{}edu\\
  \url{http://www.cs.utoronto.ca/~rafael/index.html}
\end{authorinfo}
\begin{authorinfo}[solymosi]
  J\'{o}zsef Solymosi\\
  University of British Columbia\\
  Vancouver, BC, Canada\\
  solymosi\imageat{}math\imagedot{}ubc\imagedot{}ca\\
  \url{http://www.math.ubc.ca/~solymosi/}
\end{authorinfo}
\end{dajauthors}

\end{document}